\newcommand{\R}{\mathbb R}
\newcommand{\Op}{\mathcal O}
\def \colonequals {\mathrel{\mathop:}=}
 \numberwithin{equation}{section}
\newtheorem{thm}{Theorem}[section]
\newtheorem{defin}[thm]{Definition}
\newtheorem{lem}[thm]{Lemma}
\newtheorem{cor}[thm]{Corollary}
\newtheorem{prop}[thm]{Proposition}
\newtheorem{rem}[thm]{Remark}
\newtheorem*{thm*}{Theorem}
\title{A Two-Phase Free Boundary Problem for Harmonic Measure}
\date{\today}
\author{Max Engelstein}
\address{Department of Mathematics, University of Chicago, 5734 S. University Avenue, Chicago, IL, 60637}
\email{maxe@math.uchicago.edu}
\subjclass[2010]{35R35}
\begin{document}

\maketitle

\begin{abstract}
We study a 2-phase free boundary problem for harmonic measure first considered by Kenig and Toro \cite{kenigtorotwophase} and prove a sharp H\"older regularity result. The central difficulty is that there is no {\it a priori} non-degeneracy in the free boundary condition. Thus we must establish non-degeneracy by means of monotonicity formulae. 

\end{abstract}

\tableofcontents

\section{Introduction}\label{sec: intro}
In this paper we consider the following two-phase free boundary problem for harmonic measure: let $\Omega^+$ be an unbounded 2-sided non-tangentially accessible (NTA) domain (see Definition \ref{ntadomain}) such that $\log(h)$ is regular, e.g. $\log(h) \in C^{0,\alpha}(\partial \Omega)$. Here $h \colonequals \frac{d\omega^-}{d\omega^+}$ and $\omega^{\pm}$ is the harmonic measure associated to the domain $\Omega^{\pm}$ ($\Omega^-  \colonequals \mathrm{int}((\Omega^+)^c)$).  We ask the question:  what can be said about the regularity of $\partial \Omega$? 

This question was first considered by Kenig and Toro (see \cite{kenigtorotwophase}) when $\log(h) \in \mathrm{VMO}(d\omega^+)$. They concluded, under the initial assumption of $\delta$-Reifenberg flatness, that $\Omega$ is a vanishing Reifenberg flat domain (see Definition \ref{reifenbergflat}). Later, the same problem, without the initial flatness assumption, was investigated by Kenig, Preiss and Toro (see \cite{kenigpreisstoro}) and Badger (see \cite{badgerharmonicmeasure} and \cite{badgerharmonicpolynomial}). Our work is a natural extension of theirs, though the techniques involved are substantially different. 

Our main theorem is:

\begin{thm}\label{maintheorem}
Let $\Omega$ be a 2-sided NTA domain with $\log(h) \in C^{k,\alpha}(\partial \Omega)$ where $k \geq 0$ is an integer and $\alpha \in (0,1)$.
\begin{itemize}
\item When $n =2$: $\partial \Omega$ is locally given by the graph of a $C^{k+1,\alpha}$ function.
\item When $n\geq 3$: there is some $\delta_n > 0$ such that if $\delta < \delta_n$ and $\Omega$ is $\delta$-Reifenberg flat then $\partial \Omega$ is locally given by the graph of a $C^{k+1, \alpha}$ function.
\end{itemize}
Similarly, if $\log(h) \in C^{\infty}$ or $\log(h)$ is analytic we can conclude (under the same flatness assumptions above) that $\partial \Omega$ is locally given by the graph of a $C^\infty$ (resp. analytic) function. 
\end{thm}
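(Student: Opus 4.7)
The plan is to (i) reinterpret the measure-theoretic free boundary condition $d\omega^-/d\omega^+=h$ as a classical Bernoulli-type condition $|\nabla u^+|=h|\nabla u^-|$ on $\partial\Omega$, where $u^{\pm}$ are appropriately normalized Green's functions of $\Omega^{\pm}$; (ii) obtain initial $C^{1,\alpha}$ regularity of $\partial\Omega$ via a blow-up and flatness-improvement scheme in the spirit of Alt--Caffarelli and De~Silva; and (iii) bootstrap from $C^{1,\alpha}$ to $C^{k+1,\alpha}$ using Schauder theory after flattening the boundary by a hodograph transform.

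The crux, step (ii), is establishing non-degeneracy of $u^\pm$, i.e.\ $u^\pm(x)\gtrsim\mathrm{dist}(x,\partial\Omega)$ near every boundary point. Without this, blow-up limits of $(u^+,u^-)$ could be trivial and no flatness iteration can be started. The abstract identifies this as the essential obstruction, and the route is to apply monotonicity formulae of Alt--Caffarelli--Friedman type. Regarding $u^+$ and $u^-$, each extended by zero into the complementary domain, as nonnegative subharmonic functions with disjoint supports, the quantity
\begin{equation*}
J(r) \colonequals \frac{1}{r^{4}}\left(\int_{B_r(x_0)}|\nabla u^+|^2|x-x_0|^{2-n}\,dx\right)\left(\int_{B_r(x_0)}|\nabla u^-|^2|x-x_0|^{2-n}\,dx\right)
\end{equation*}
is monotone non-decreasing. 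The combination of $\log h\in C^{0,\alpha}\subset\mathrm{VMO}$, the Kenig--Toro vanishing-Reifenberg conclusion, and two-sided NTA should rule out $J(0^+)=0$, forcing each factor to have a strictly positive limit. A Weiss-type monotonicity formula can then be used to promote subsequential convergence of blow-ups to full convergence and to classify them as pairs of half-plane solutions $(ax_n^+, bx_n^-)$ with $b=h(x_0)a$.

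With non-degeneracy in hand the free boundary condition is genuinely Bernoulli. In dimension $n=2$, the harmonic conjugate reduces the pair $(u^+,u^-)$ to the derivative of a locally univalent holomorphic map, and the H\"older free boundary condition translates to a H\"older condition on the derivative of the associated Riemann map; $C^{k+1,\alpha}$ boundary regularity then follows from classical Kellogg--Warschawski theory together with a bootstrap, with no flatness hypothesis needed. In dimension $n\geq 3$, the hypothesis $\delta<\delta_n$ provides the initial flatness to begin a De~Silva improvement-of-flatness scheme: linearizing $|\nabla u^+|=h|\nabla u^-|$ around a one-dimensional solution gives a two-phase transmission/Neumann problem on the half-space, whose interior $C^{1,\alpha}$ regularity at the flat boundary upgrades the flatness of $\partial\Omega$ at a geometric rate.

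For step (iii), one straightens a boundary patch by a partial hodograph transform. The pair $(u^+,u^-)$ becomes a solution of a uniformly elliptic system on a fixed half-ball with a nonlinear $C^{k,\alpha}$ boundary condition involving the coefficient $h$, and Schauder (Agmon--Douglis--Nirenberg) estimates yield the graph function in $C^{k+1,\alpha}$. The $C^\infty$ case follows by iteration, and the analytic case from the Kinderlehrer--Nirenberg analyticity theorem for elliptic free boundary problems. The principal obstacle throughout is step (ii): obtaining non-degeneracy of the $u^\pm$ from a condition on $h$ that, a priori, controls only the ratio and not the individual magnitudes of the Poisson kernels.
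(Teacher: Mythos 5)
Your overall architecture (establish non-degeneracy via monotonicity formulae, promote the free boundary condition to a classical Bernoulli condition, run a De~Silva improvement-of-flatness scheme, then bootstrap via a hodograph transform and Schauder estimates) matches the paper's program in broad outline. But the central claim of step (ii) — that ACF monotonicity together with $\log h\in\mathrm{VMO}$ and vanishing Reifenberg flatness ``should rule out $J(0^+)=0$'' — is exactly the statement that needs to be proved, and you offer no mechanism. ACF monotonicity makes $J$ non-decreasing in $r$, which only gives an \emph{upper} bound $J(r)\le J(R)$, hence an upper bound on $\frac{\omega^+(B)}{r^{n-1}}\cdot\frac{\omega^-(B)}{r^{n-1}}$; it says nothing about $J(0^+)$ being positive. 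There is no a priori lower bound hidden in the hypotheses: $h$ controls only the ratio $\omega^-/\omega^+$, so both densities could vanish simultaneously at a rate that $h$ cannot see, which is precisely the degeneracy the paper calls the central difficulty. A Weiss-type functional would not fill this gap either without a corresponding almost-monotonicity input.

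The paper's actual resolution, which your proposal does not contain, is to leave the ACF and Weiss functionals aside and instead work with the auxiliary function $v^{(Q)}(x)=h(Q)u^+(x)-u^-(x)$, which is ``almost harmonic'' in the sense that $\Delta v^{(Q)}=\bigl(\tfrac{h(Q)}{h}-1\bigr)d\omega^-$ has H\"older-small density near $Q$. One then shows that Almgren's frequency $N(r,Q,v^{(Q)})$ is \emph{almost monotone} — its negative part of derivative is summable, with an explicit $r^{\alpha/2-1}$ bound — and feeds this into a Monneau-type potential $M^{Q}(r,v^{(Q)},p)$ against $1$-homogeneous polynomials $p$. The limit $M(0^+,p)$ then exists, and a contradiction argument (assume $\Theta^{n-1}(\omega^-,Q)=0$, blow up, compare $M(r_j,p)$ to $M(0,p)$) yields strict positivity of the density. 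Continuity of the blow-up plane and uniform non-degeneracy then follow from the same Monneau almost-monotonicity. Without these almost-monotonicity estimates for $N$ and $M$ applied to the specific combination $h(Q)u^+-u^-$, the non-degeneracy step — and hence the whole argument — does not get off the ground. Your complex-analytic suggestion for $n=2$ (Kellogg--Warschawski) is a plausible alternative route to the higher regularity \emph{once} non-degeneracy is in hand, but it does not help establish it; the paper itself observes that even in the plane the non-degeneracy results appear to be new.
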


When $n> 2$, the initial flatness assumption is needed; if $n \geq 4$, $\Omega = \{X\in \R^n \mid x_1^2 + x_2^2 > x_3^2 + x_4^2\}$ is a 2-sided NTA domain such that $\omega^+ = \omega^-$ on $\partial \Omega$ (where the poles are at infinity). As such, $h \equiv 1$ but, at zero, this domain is not a graph. In $\R^3$, H. Lewy (see \cite{lewy}) proved that, for $k$ odd, there are homogeneous harmonic polynomials of degree $k$ whose zero set divides $\mathbb S^2$ into two domains. The cones over these regions are NTA domains and one can calculate that $\log(h) = 0$. Again, at zero, $\partial \Omega$ cannot be written as a graph. However, these two examples suggest an alternative to the {\it a priori} flatness assumption.

\begin{thm}\label{maintheoremprime}
Let $\Omega$ be a Lipschitz domain (that is, $\partial \Omega$ can be locally written as the graph of a Lipschitz function) and let $h$ satisfy the conditions of Theorem \ref{maintheorem}. Then the same conclusions hold. 
\end{thm}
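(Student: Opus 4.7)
The plan is to reduce Theorem \ref{maintheoremprime} to Theorem \ref{maintheorem} by showing that the Lipschitz hypothesis together with the $\log h$ regularity automatically forces $\delta$-Reifenberg flatness at small scales around every boundary point. Fix $Q \in \partial \Omega$. Since $\Omega$ is Lipschitz, the rescaled domains $\Omega_r \colonequals r^{-1}(\Omega - Q)$ have uniformly bounded Lipschitz constants in $r > 0$. A compactness argument (Arzela-Ascoli on the defining graph functions) then gives that along any sequence $r_k \downarrow 0$, a subsequence converges in the Hausdorff sense on compact sets to a Lipschitz cone $\Omega_\infty$ with vertex at the origin; simultaneously, NTA estimates ensure that the normalized Green's functions and harmonic measures of $\Omega_{r_k}^\pm$ converge to their analogues on $\Omega_\infty^\pm$.

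The Hölder continuity $|\log h(X) - \log h(Q)| \leq C|X-Q|^\alpha$ passes to the limit as $\log h_\infty \equiv \log h(Q)$ on $\partial \Omega_\infty$, and after rescaling $u^-_\infty$ by the constant $h(Q)$ one may assume $h_\infty \equiv 1$, i.e.\ $\omega^+_\infty = \omega^-_\infty$. The crux is then to show that a 2-sided Lipschitz cone on which $\omega^+ = \omega^-$ must be a half-space. Let $u^\pm_\infty$ be the Green's functions with pole at infinity on $\Omega_\infty^\pm$; the identity of surface measures together with the homogeneity forced by the cone structure requires both to be homogeneous of a common degree $d > 0$ (otherwise $|\nabla u^+_\infty|$ and $|\nabla u^-_\infty|$ would scale differently on $\partial \Omega_\infty$). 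The equality $\omega^+_\infty = \omega^-_\infty$, combined with Lipschitz regularity (which supplies non-tangential normal derivatives almost everywhere on $\partial \Omega_\infty$), yields $|\nabla u^+_\infty| = |\nabla u^-_\infty|$ on $\partial \Omega_\infty$ in the trace sense. Hence $v \colonequals u^+_\infty - u^-_\infty$ (extended across $\partial \Omega_\infty$ with the appropriate sign convention) is continuous on $\R^n$, harmonic across the boundary, and homogeneous of degree $d$; thus $v$ is a homogeneous harmonic polynomial of degree $d$ whose zero set is $\partial \Omega_\infty$.

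Since $\partial \Omega_\infty$ is a Lipschitz graph through the origin, the only possibility is $d=1$: any homogeneous harmonic polynomial of degree $\geq 2$ has a zero set with a singular point at the origin that cannot be realized as a Lipschitz graph, and this is precisely the step at which the Lipschitz hypothesis excludes the Lewy-type examples mentioned in the introduction. Therefore $v$ is linear, $\Omega_\infty$ is a half-space, and every tangent cone at every point of $\partial \Omega$ is flat. A standard compactness argument then upgrades this pointwise fact to the statement that for every $\delta > 0$ and every $Q \in \partial \Omega$ there exists $r_Q > 0$ such that $\Omega$ is $\delta$-Reifenberg flat in $B_{r_Q}(Q)$; Theorem \ref{maintheorem} applied on such balls delivers the claimed $C^{k+1,\alpha}$ (resp.\ $C^\infty$, analytic) regularity. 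The main obstacle is the rigidity step: one must verify that $u^\pm_\infty$ have boundary traces of their gradients and that the pointwise a.e.\ equality $|\nabla u^+_\infty| = |\nabla u^-_\infty|$ holds, and one must classify the zero sets of homogeneous harmonic polynomials that can be Lipschitz graphs. Both rely essentially on the non-degeneracy machinery developed earlier in the paper, without which the Lewy cones could in principle appear as blow-ups.
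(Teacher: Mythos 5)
Your overall strategy — reduce to Theorem \ref{maintheorem} by showing blowups are half-planes and hence $\Omega$ is Reifenberg flat — matches the paper's strategy, which is carried out in Corollary \ref{blowupsforeveryone}. However, your implementation diverges in the two places where the paper invokes serious external results, and in both places your substitute argument has a gap.

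First, you assume that a subsequential blowup limit $\Omega_\infty$ of a Lipschitz domain is a \emph{cone}, and you use this cone structure to conclude that $u^\pm_\infty$ are homogeneous of a common degree. But the blowup of a Lipschitz domain at a boundary point need not be a cone: if $\partial\Omega$ is the graph of $\phi$ with $\phi(0)=0$, the rescaled graphs $\phi_r(x')=r^{-1}\phi(rx')$ are uniformly Lipschitz but the subsequential limits need not be $1$-homogeneous (they are unless $\phi$ is differentiable at $0$, which is not a priori known). The paper bridges this exact gap by citing two nontrivial results: Kenig–Toro's Theorem \ref{blowuplimits}, which says $u_\infty$ is a harmonic \emph{polynomial}, and Badger's Theorem 1.1 from \cite{badgerharmonicmeasure}, which upgrades ``all blowups are zero sets of degree $\leq d$ harmonic polynomials'' to ``all blowups are zero sets of \emph{homogeneous} harmonic polynomials.'' Your reflection argument (that $v = u^+_\infty - u^-_\infty$ is harmonic across $\partial\Omega_\infty$ because the normal derivatives match) is plausible once homogeneity and polynomial structure are known, but as written it is circular: you need the cone structure to get homogeneity and you need homogeneity to recognize the zero set as a cone.

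Second, you dismiss the passage from ``all centered blowups at each $Q$ are half-spaces'' to ``$\Omega$ is $\delta$-Reifenberg flat near $Q$'' as a ``standard compactness argument.'' This upgrade is not standard; it is precisely Badger's Theorem 6.8 of \cite{badgerharmonicpolynomial}, which the paper quotes for this purpose. The subtlety is that Reifenberg flatness requires control over $\theta(P,r)$ uniformly for all $P$ near $Q$ and all small $r$, which is a statement about \emph{pseudo}-blowups (with moving centers $Q_j\to Q$), not just the centered blowups you treat. Your proposal handles only the latter; a contradiction argument for the uniform statement would itself require a compactness theorem for pseudo-blowups of the type given by Theorem \ref{blowuplimits}, which you have not invoked. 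In short, your route is genuinely different in spirit — more self-contained and elementary in intent — but both of its original steps require exactly the theorems of Kenig–Toro and Badger that the paper cites, and without them the argument does not close.
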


The corresponding one-phase problem, ``Does regularity of the Poisson kernel imply regularity of the free boundary?", has been studied extensively. Alt and Caffarelli (see \cite{altcaf}) first showed, under suitable flatness assumptions, that $\log(\frac{d\omega}{d\sigma}) \in C^{0,\alpha}(\partial \Omega)$ implies $\partial \Omega$ is locally the graph of a $C^{1,s}$ function. Jerison (see \cite{jerison}) showed $s = \alpha$ above and, furthermore, if $\log(\frac{d\omega}{d\sigma}) \in C^{1,\alpha}(\partial \Omega)$ then $\partial \Omega$ is locally the graph of a $C^{2,\alpha}$ function (from here, higher regularity follows from classical work of Kinderlehrer and Nirenberg, \cite{kinderlehrernirenberg}). Later, Kenig and Toro (see \cite{kenigtoro}) considered when  $\log(\frac{d\omega}{d\sigma}) \in \mathrm{VMO}(d\sigma)$ and concluded that $\partial \Omega$ is a vanishing chord-arc domain (see Definition 1.8 in \cite{kenigtoro}). 

 Two-phase elliptic problems are also an object of great interest. The paper of Alt, Caffarelli and Friedman (see \cite{acf}) studied an ``additive" version of our problem.  Later, Caffarelli (see \cite{caf} for part one of three) studied viscosity solutions to an elliptic free boundary problem similar to our own. This work was then extended to the non-homogenous setting by De Silva, Ferrari and Salsa (see \cite{silvaferrarisalsa}). It is important to note that, while our problem is related to those studied above, we cannot immediately apply any of their results. In each of the aforementioned works there is an {\it a priori} assumption of non-degeneracy built into the problem (either in the class of solutions considered or in the free boundary condition itself). Our problem has no such {\it a priori} assumption. Unsurprisingly, the bulk of our efforts goes into establishing non-degeneracy. 

Even in the case of $n=2$, where the powerful tools of complex analysis can be brought to bear, our non-degeneracy results seem to be new. We briefly summarize some previous work in this area: let $\Omega^+$ be a simply connected domain bounded by a Jordan curve and $\Omega^- = \overline{\Omega^+}^c$. Then $\partial \Omega = G^+ \cup S^+ \cup N^+$ where \begin{itemize}
\item $\omega^+(N^+) = 0$
\item $\omega^+ << \mathcal H^1 << \omega^+$ on $G^+$
\item Every point of $G^+$ is the vertex of a cone in $\Omega^+$. Furthermore, if $C^+$ is the set of all cone points for $\Omega^+$ then $\mathcal H^1(C^+\backslash G^+) = 0 = \omega^+(C^+\backslash G^+)$. 
\item $\mathcal H^1(S^+) = 0$. 
\item For $\omega^+$ a.e $Q \in S^+$ we have $\limsup_{r\downarrow 0} \frac{\omega^+(B(Q,r))}{r} = +\infty$ and $\liminf_{r\downarrow 0} \frac{\omega^+(B(Q,r))}{r} = 0$
\end{itemize}  with a similar decomposition for $\omega^-$. These results are due to works by Makarov, McMillan, Pommerenke and Choi. See Garnett and Marshall \cite{garnettandmarshall}, Chapter 6 for an introductory treatment and more precise references. 

In our context, that is where $\omega^+ << \omega^- << \omega^+$, $\Omega$ is a 2-sided NTA domain and $\log(h) \in C^{0,\alpha}(\partial \Omega)$, one can use the Beurling monotonicity formula (see Lemma 1 in \cite{bishopcarlesongarnettjones}) to show $\limsup_{r\downarrow \infty} \frac{\omega^{\pm}(B(x,r))}{r} < \infty$. Therefore, $\omega^{\pm}(S^+\cup S^-) = 0$ and we can write $\partial \Omega = \Gamma \cup N$ where $\omega^{\pm}(N) = 0$ and $\Gamma$ is 1-rectifiable (i.e. the image of countably many Lipschitz maps) and has $\sigma$-finite $\mathcal H^1$-measure. This decomposition is implied for $n > 2$ by the results of Section \ref{sec: thetanondegenerate}. In order to prove increased regularity one must bound from below $\liminf_{r\downarrow 0} \frac{\omega^+(B(Q,r))}{r}$, which we do in Corollary \ref{uniformlowerbound} and seems to be an original contribution to the literature.

The approach is as follows: after establishing some initial facts about blowups and the Lipschitz continuity of the Green's function (Sections \ref{blowupsonntaandlipschitz} and \ref{sec: u is lipschitz}) we tackle the issue of degeneracy. Our main tools here are the monotonicity formulae of Almgren, Weiss and Monneau which we introduce in Section \ref{sec: thetanondegenerate}. Unfortunately, in our circumstances these functionals are not actually monotonic. However, and this is the key point, we show that they are ``almost monotonic" (see, e.g., Theorem \ref{growthofM}). More precisely, we bound the first derivative from below by a summable function. From here we quickly conclude pointwise non-degeneracy. In Section \ref{sec: c1domain}, we use the quantitative estimates of the previous section to prove uniform non-degeneracy and establish the $C^1$ regularity of the free boundary. 

At this point the regularity theory developed by De Silva et al. (see \cite{silvaferrarisalsa}) and Kinderlehrer et al. (see \cite{kinderlehrernirenberg} and \cite{knstwosided}) can be used to produce the desired conclusion. However, these results cannot be applied directly and some additional work is required to adapt them to our situation. These arguments, while standard, do not seem to appear explicitly in the literature. Therefore, we present them in detail here. Section  \ref{sec: initialholderregularity} adapts the iterative argument of De Silva, Ferrari and Salsa \cite{silvaferrarisalsa} to get $C^{1,s}$ regularity for the free boundary. In Section \ref{sec: higherholder} we first describe how to establish optimal $C^{1,\alpha}$ regularity and then $C^{2,\alpha}$ regularity (in analogy to the aforementioned work of Jerison \cite{jerison}). This is done through an estimate in the spirit of Agmon et al. (\cite{adn1} and \cite{adn2}) which is proven in the appendix. Higher regularity then follows easily.

\medskip

\noindent {\bf Acknowledgements:} This research was partially supported by the Department of Defense's National Defense Science and Engineering Graduate Fellowship as well as by the National Science Foundation's Graduate Research Fellowship, Grant No. (DGE-1144082). We thank the anonymous referee for several helpful comments and corrections. The author would also like thank Professor Carlos Kenig for his guidance, support and, especially, boundless patience. 

\section{Notation and Definitions}\label{sec: prelimsanddefs}

Throughout this article $\Omega \subset \R^n$ is an open set and our object of study. For simplicity, $\Omega^+ \colonequals \Omega$ and $\Omega^- \colonequals \overline{\Omega}^c$. To avoid technicalities we will assume that $\Omega^{\pm}$ are both unbounded and let $u^{\pm}$ be the Green's function of $\Omega^{\pm}$ with a pole at $\infty$ (our methods and theorems apply to finite poles and bounded domains). Let $\omega^{\pm}$ be the harmonic measure of $\Omega^{\pm}$ associated to $u^{\pm}$; it will always be assumed that $\omega^- << \omega^+ << \omega^-$. Define $h = \frac{d\omega^-}{d\omega^+}$ to be the Radon-Nikodym derivative and unless otherwise noted, it will be assumed that $\log(h) \in C^{0,\alpha}(\partial \Omega)$. 

Finally, for a measurable $f: \R^n \rightarrow \R$, we write $f^+(x) \colonequals |f(x)| \chi_{\{f > 0\}}(x)$ and $f^-(x) \colonequals |f(x)|\chi_{\{f < 0\}}(x)$. In particular, $f(x) = f^+(x) - f^-(x)$. Define $u^{\pm}$ outside of $\Omega^{\pm}$ to be identically zero and set $u(x) \colonequals u^+(x)- u^-(x)$ (so that these two notational conventions comport with each other). 

Recall the definition of an non-tangentially accessible (NTA) domain.

\begin{defin}\label{ntadomain}[See \cite{jerisonandkenig} Section 3]
A domain $\Omega \subset \R^n$ is {\bf non-tangentially accessible}, (NTA), if there are constants $M > 1, R_0 > 0$ for which the following is true:
\begin{enumerate}
\item $\Omega$ satisfies the corkscrew condition: for any $Q \in \partial \Omega$ and $0 < r < R_0$ there exists $A = A_r(Q) \in \Omega$ such that $M^{-1}r < \mathrm{dist}(A, \partial \Omega) \leq |A-Q| < r$.
\item $\overline{\Omega}^c$ satisfies the corkscrew condition.
\item $\Omega$ satisfies the Harnack chain condition: let $\varepsilon > 0, x_1, x_2\in \Omega\cap B(R_0/4, Q)$ for a $Q\in \partial \Omega$ with $\mathrm{dist}(x_i, \partial \Omega) > \varepsilon$ and $|x_1-x_2| \leq 2^k \varepsilon$. Then there exists a ``Harnack chain" of overlapping balls contained in $\Omega$ connecting $x_1$ to $x_2$. Furthermore we can ensure that there are no more than $Mk$ balls and that the diameter of each ball is bounded from below by $M^{-1}\min_{i=1,2}\{\mathrm{dist}(x_i, \partial \Omega)\}$
\end{enumerate}

When $\Omega$ is unbounded we also require that $\R^n\backslash \partial \Omega$ has two connected components and that $R_0 = \infty$.

We say that $\Omega$ is {\bf $2$-sided NTA} if both $\Omega$ and $\overline{\Omega}^c$ are NTA domains. The constants $M, R_0$ are referred to as the ``NTA constants" of $\Omega$. 
\end{defin}

It should be noted that our analysis in this paper will be mostly local. As such we need only that our domains be ``locally NTA" (i.e. that $M, R$ can be chosen uniformly on compacta). However, for the sake of simplicity we will work only with NTA domains. We now recall the definition of a Reifenberg flat domain. 

\begin{defin}\label{reifenbergflat}
For $Q\in \partial \Omega$ and $r > 0$, $$\theta(Q,r) \colonequals \inf_{P \in G(n, n-1)} D[\partial \Omega \cap B(Q,r), \{P + Q\} \cap B(Q,r)],$$ where $D[A,B]$ is the Hausdorff distance between $A,B$. 

For $\delta > 0, R> 0$ we then say that $\Omega$ is {\bf $(\delta, R)$-Reifenberg flat} if for all $Q \in \partial \Omega, r < R$ we have $\theta(Q,r) \leq \delta.$ When $\Omega$ is unbounded we say it is {\bf $\delta$-Reifenberg flat} if the above holds for all $0 < r < \infty$. 

Additionally, if $K \subset \subset \R^n$ we can define $$\theta_K(r) = \sup_{Q\in K\cap \partial \Omega} \theta(Q,r).$$ Then we say that $\Omega$ is {\bf vanishing Reifenberg flat} if for all $K \subset \subset \R^n$, $\limsup_{r\downarrow 0} \theta_K(r) = 0.$
\end{defin}

\begin{rem}\label{lipschitzdomain}
Recall that a $\delta$-Reifenberg flat NTA domain is not necessarily a Lipschitz domain, and a Lipschitz domain need not be $\delta$-Reifenberg flat. However, all Lipschitz domains are (locally) 2-sided NTA domains (see \cite{jerisonandkenig} for more details and discussion). \end{rem}

Finally, let us make two quick technical points regarding $h$.

\begin{rem}\label{rnderivative}
For every $Q \in \partial \Omega$, we have $\lim_{r\downarrow 0} \frac{\omega^-(B(Q,r))}{\omega^+(B(Q,r))} = h(Q)$ (in particular the limit exists for every $Q\in \partial \Omega$).
\end{rem}

\begin{proof}[Justification of Remark]
By assumption, $\frac{d\omega^-}{d\omega^+}$ agrees with a H\"older continuous function $h$ where defined (i.e. $\omega^+$-almost everywhere). For any $Q\in \partial \Omega$ we can rewrite $\lim_{r\downarrow 0} \frac{\omega^-(B(Q,r))}{\omega^+(B(Q,r))} = \lim_{r\downarrow 0}\fint_{B(Q,r)} \frac{d\omega^-}{d\omega^+}(P) d\omega^+(P) = \lim_{r\downarrow 0}\fint_{B(Q,r)} h(P)d\omega^+(P)$. This final limit exists and is equal to $h(Q)$ everywhere because $h$ is continuous.
\end{proof}

We also note that $h$ is only defined on $\partial \Omega$. However, by Whitney's extension theorem, we can extend $h$ to $\tilde{h}: \R^n \rightarrow \R$ such that $\tilde{h} = h$ on $\partial \Omega$ and $\log(\tilde{h}) \in C^\alpha(\R^n)$ (or, if $\log(h) \in C^{k,\alpha}(\partial \Omega)$ then $\log(\tilde{h}) \in C^{k,\alpha}(\R^n)$). For simplicity's sake, we will abuse notation and let $h$ refer to the function defined on all of $\R^n$. 

\section{Blowups on NTA and Lipschitz Domains}\label{blowupsonntaandlipschitz}

For any $Q \in \partial \Omega$ and any sequence of $r_j \downarrow 0$ and $Q_j \in \partial \Omega$ such that $Q_j\rightarrow Q$, define the {\bf pseudo-blowup} as follows:

\begin{equation}\label{blowup}
\begin{aligned}
\Omega_j &\colonequals \frac{1}{r_j}(\Omega-Q_j) \\
u^{\pm}_j(x) & \colonequals  \frac{u^{\pm}(r_jx + Q_j) r_j^{n-2}}{\omega^{\pm}(B(Q_j,r_j))}\\
\omega^{\pm}_j(E) & \colonequals  \frac{\omega^\pm(r_jE + Q_j)}{\omega^{\pm}(B(Q_j, r_j))}.
\end{aligned}
\end{equation}

A pseudo-blowup where $Q_j \equiv Q$, is a {\bf blowup}. Kenig and Toro characterized pseudo-blowups of 2-sided NTA domains when $\log(h) \in \mathrm{VMO}(d\omega^+)$. 

\begin{thm}\label{blowuplimits}[\cite{kenigtorotwophase}, Theorem 4.4]
Let $\Omega^{\pm}\subset \R^n$ be a 2-sided NTA domain, $u^{\pm}$ the associated Green's functions and $\omega^{\pm}$ the associated harmonic measures.  Assume $\log(h)\in \mathrm{VMO}(d\omega^+)$. Then, along any pseudo-blowup,  there exists a subsequence (which we shall relabel for convenience) such that (1) $\Omega_j \rightarrow \Omega_\infty$ in the Hausdorff distance uniformly on compacta, (2) $u_j^{\pm} \rightarrow u_\infty^{\pm}$ uniformly on compact sets (3) $\omega_j^{\pm} \rightharpoonup \omega_\infty^{\pm}$. Furthermore, $u_\infty \colonequals u_\infty^+ - u_\infty^-$ is a harmonic polynomial (whose degree is bounded by some number which depends on the dimension and the NTA constants of $\Omega$) and $\partial \Omega_\infty = \{u_\infty = 0\}$.

Additionally, if $n=2$ or $\Omega$ is a $\delta$-Reifenberg flat domain with $\delta > 0$ small enough (depending on $n$) then $u_\infty(x)= x_n$ (possibly after a rotation). In particular, $\Omega$ is vanishing Reifenberg flat.
\end{thm}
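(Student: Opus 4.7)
My plan is to split the proof into a compactness part (establishing existence of the limit objects $\Omega_\infty$, $u_\infty^\pm$, $\omega_\infty^\pm$), an identification part (showing $u_\infty$ is a harmonic polynomial with zero set $\partial\Omega_\infty$), and a rigidity part (the linear classification when $n=2$ or $\Omega$ is Reifenberg flat).

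For the compactness claims (1)--(3), the starting observation is that the NTA conditions and the Reifenberg flatness condition are scale-translation invariant, so each $\Omega_j$ is a 2-sided NTA domain with the same constants as $\Omega$. Corkscrew + Harnack chain control then gives Hausdorff precompactness on compacta for $\partial\Omega_j$ via a standard Arzel\`a--Ascoli / diagonal extraction; passing to a subsequence gives $\Omega_j \to \Omega_\infty$. For the Green's functions, the normalization is chosen precisely so that the NTA comparison $u^\pm(A_r(Q_j)) \, r_j^{n-2}\approx \omega^\pm(B(Q_j,r_j))$ (CFMS-type estimates in Jerison--Kenig) forces $u_j^\pm$ to be uniformly bounded on compacta. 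Interior gradient estimates for positive harmonic functions plus the boundary H\"older continuity from NTA give equicontinuity up to the boundary, hence locally uniform subsequential convergence to some nonnegative $u_\infty^\pm$ harmonic in $\Omega_\infty^\pm$ and vanishing on $\partial\Omega_\infty$. Finally, $\omega_j^\pm(B(0,1))=1$ combined with doubling of harmonic measure in NTA domains gives local uniform bounds on $\omega_j^\pm$, so Banach--Alaoglu yields the weak-$\ast$ convergence $\omega_j^\pm \rightharpoonup \omega_\infty^\pm$, and one verifies that $\omega_\infty^\pm$ is the harmonic measure of $\Omega_\infty^\pm$ associated to $u_\infty^\pm$ by passing to the limit in Green's formula.

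The central step is showing $u_\infty$ is a harmonic polynomial, and here the VMO hypothesis enters. From $\omega_j^-(E)/\omega_j^+(E)=\bigl(\omega^-/\omega^+\bigr)(r_jE+Q_j)/\bigl(\omega^-/\omega^+\bigr)(B(Q_j,r_j))$, writing the ratios as averages of $h$ and invoking $\log h\in\mathrm{VMO}(d\omega^+)$ shows the right-hand side tends to $1$ as $j\to\infty$; thus $\omega_\infty^+ = \omega_\infty^-$. Since $\Delta u_\infty^+ = -\omega_\infty^+$ and $\Delta u_\infty^- = -\omega_\infty^-$ as distributions on $\R^n$, we get $\Delta u_\infty = 0$ globally. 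The identity $\partial\Omega_\infty = \{u_\infty = 0\}$ follows because $u_\infty^+ > 0$ at the interior corkscrew points (by Harnack passed to the limit) and $u_\infty^\pm$ vanish on $\partial\Omega_\infty$. To see that the entire harmonic function $u_\infty$ is a polynomial of a priori bounded degree, use the doubling constant $C_d$ of harmonic measure on NTA domains: iterating $\omega^\pm(B(Q_j, 2^k r_j)) \le C_d^k \omega^\pm(B(Q_j, r_j))$ together with the CFMS estimate yields $|u_j^\pm(x)| \le C(1+|x|)^{N}$ on compacta, with $N$ depending only on $n$ and the NTA constants. This bound survives in the limit, so $u_\infty$ is an entire harmonic function of polynomial growth, hence a harmonic polynomial of degree $\leq N$. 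I expect this VMO-to-equal-measures passage, and the careful identification of the distributional Laplacian, to be the main technical obstacle.

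For the rigidity statement, note that $\Omega_\infty$ inherits whatever flatness $\Omega$ has: in the Reifenberg case, $\Omega_\infty$ is $\delta$-Reifenberg flat. A harmonic polynomial $p$ of degree $\geq 2$ whose nodal set separates $\R^n$ into two NTA pieces must have a nodal singularity at $0$, and at that scale its zero set is manifestly not $\delta$-close to any hyperplane once $\delta$ is small depending on $n$; so the only possibility is $\deg u_\infty = 1$, i.e.\ $u_\infty(x)=x_n$ up to rotation (the constant being fixed by the normalization). For $n=2$, harmonic polynomials factor (up to rotation) as $\mathrm{Re}(z^k)$ times a linear form, and for $k\geq 2$ the zero set consists of $2k\geq 4$ rays meeting at the origin, which is incompatible with the complement being exactly two connected components as required by the 2-sided NTA assumption; thus again $u_\infty$ is linear. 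The final conclusion that $\Omega$ is vanishing Reifenberg flat then follows because every blowup is a half-space, so the flatness $\theta(Q,r)\to 0$ uniformly on compacta by the Hausdorff convergence in (1) and a standard compactness-contradiction argument.
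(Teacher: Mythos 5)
This theorem is not proved in the paper you were given: it is quoted verbatim from Kenig--Toro (Theorem 4.4 of \cite{kenigtorotwophase}) and used as a black box. So there is no in-paper argument to compare against, and what you have written is essentially a reconstruction of the strategy of the cited reference. Your overall architecture --- scale-invariance of the NTA constants, Hausdorff/Arzel\`a--Ascoli compactness, CFMS comparability plus doubling to normalize the $u_j^{\pm}$ and get a polynomial-growth bound, passing the VMO condition to the limit to force $\omega_\infty^+=\omega_\infty^-$ and hence $\Delta u_\infty=0$ globally, then Liouville-type rigidity --- is the correct outline and is faithful to Kenig--Toro.

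Two places in your rigidity step are stated in a way that does not quite go through. First, you assert that a harmonic polynomial of degree $\geq 2$ whose zero set separates $\R^n$ into two NTA components ``must have a nodal singularity at $0$''; this is false (e.g.\ $p(x)=x_n+x_1^2-x_2^2$ has $\nabla p(0)\neq 0$ yet $\deg p=2$). The actual obstruction to $\delta$-Reifenberg flatness comes from large scales: since $\Omega_\infty$ is unbounded and $\delta$-Reifenberg flat with $R_0=\infty$, the flatness must hold as $r\to\infty$, and at large scale $\{u_\infty=0\}$ is governed by the top-degree homogeneous part of $u_\infty$, whose cone of zeros is at a definite Hausdorff distance from every hyperplane once its degree is $\geq 2$. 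Because the degree is bounded in terms of the NTA constants, this distance is bounded below by a dimensional constant, and choosing $\delta$ below that threshold yields the contradiction. Second, in the $n=2$ case the blowup polynomial is not homogeneous a priori; the count of $2k$ rays applies to the leading homogeneous part and controls the asymptotic number of nodal sectors, so the argument should be run at large $|z|$ (where $\mathrm{Re}\,P(z)$ is dominated by its leading term), concluding that $k\geq 2$ would give $\Omega_\infty^{\pm}$ more than one unbounded component each, contradicting that $\R^n\setminus\partial\Omega_\infty$ has exactly two components. With those corrections the sketch is a sound account of the underlying Kenig--Toro argument.
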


This result plays a crucial role in our analysis. In particular, the key estimate in \eqref{slowerthanpoly} follows from vanishing Reifenberg flatness. Therefore, in order to prove Theorem \ref{maintheoremprime} we must establish an analogous result when $\Omega$ is a Lipschitz domain. 

\begin{cor}\label{blowupsforeveryone}
Let $\Omega \subset \R^n$ be as in Theorem \ref{maintheoremprime}. Then, along any pseudo-blowup we have (after a possible rotation) that $u_\infty(x) = x_n$. In particular, $\Omega^{\pm}$ is a vanishing Reifenberg flat domain. 
\end{cor}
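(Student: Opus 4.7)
The plan is to apply Theorem \ref{blowuplimits} and then use the Lipschitz structure to force the harmonic polynomial $u_\infty$ to be linear. Since $\log(h) \in C^{0,\alpha}(\partial \Omega) \subset \mathrm{VMO}(d\omega^+)$ and Lipschitz domains are $2$-sided NTA (Remark \ref{lipschitzdomain}), Theorem \ref{blowuplimits} applies directly. Along any pseudo-blowup sequence, a subsequence yields $\Omega_j \to \Omega_\infty$ in Hausdorff distance, $u_j^{\pm} \to u_\infty^{\pm}$ locally uniformly, and $u_\infty$ a harmonic polynomial of bounded degree with $\partial \Omega_\infty = \{u_\infty = 0\}$. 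The remaining task is to show $u_\infty$ is linear: once we know this, its zero set is a hyperplane, and after rotation and normalization, $u_\infty(x) = x_n$ and $\Omega_\infty = \{x_n > 0\}$.

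The key new input is that the Lipschitz property is scale-invariant: the rescaled domains $\Omega_j = (\Omega - Q_j)/r_j$ are uniformly Lipschitz with the same constant as $\Omega$, so the Hausdorff limit $\Omega_\infty$ is itself a Lipschitz domain with $\partial \Omega_\infty$ locally a Lipschitz graph. In particular $\Omega_\infty$ satisfies a uniform interior and exterior cone condition, so at $0 \in \partial \Omega_\infty$ there exist open cones $C^+ \subset \Omega_\infty$ and $C^- \subset \Omega_\infty^c$ with common vertex $0$ and opposite axes (after rotation, along $\pm e_n$). Consequently $u_\infty > 0$ on $C^+$ and $u_\infty < 0$ on $C^-$.

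The heart of the matter is a rigidity claim: a harmonic polynomial whose zero set is locally a Lipschitz graph through the origin must be linear. I would argue this by examining the lowest-order homogeneous part $p_m$ of $u_\infty$ at the origin, which is itself a nonzero homogeneous harmonic polynomial of some degree $m \geq 1$. The tangent cone $\{p_m = 0\}$ inherits the Lipschitz graph property and separates $C^+$ from $C^-$, with $p_m$ of opposite signs on the two sides. A parity argument using $p_m(-x) = (-1)^m p_m(x)$ forces $m$ to be odd. To rule out $m \geq 3$: in $n = 2$ one notes that $p_m$ has $2m$ zero rays through the origin, which is a graph only when $m = 1$; in $n \geq 3$ one must verify directly that the nontrivial nodal structure of homogeneous harmonic polynomials (including Lewy-type examples dividing $S^{n-1}$ into two components) cannot yield a $1$-homogeneous Lipschitz cone graph. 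Once $m = 1$, an analogous blow-down argument at infinity applied to the leading part $p_k$ of $u_\infty$ forces $k = 1$ as well, so $u_\infty$ is linear.

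The main obstacle is the rigidity step for $n \geq 3$, where Lewy's constructions demand careful handling; the Lipschitz graph property is exactly what should preclude the conical singularities inherent to higher-degree harmonic nodal sets. Having established $u_\infty(x) = x_n$ after rotation, the vanishing Reifenberg flatness of $\Omega^{\pm}$ follows from a standard compactness argument: if $\theta_K(r) \not\to 0$ on some compact $K$ as $r \downarrow 0$, one extracts a pseudo-blowup subsequence whose limit must fail to be a half-space, contradicting what has been shown.
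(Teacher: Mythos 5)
Your approach diverges from the paper's in a substantive way, and it leaves a genuine gap that the paper's proof is specifically designed to avoid.

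The paper reduces the problem to the homogeneous case using two structural theorems of Badger. First, Badger's Theorem 1.1 (quoted in the proof) upgrades the conclusion of Theorem \ref{blowuplimits}: since every blowup of $\omega$ at a point $Q$ is the harmonic measure of the zero set of some harmonic polynomial of bounded degree, it follows that every blowup at $Q$ is the harmonic measure of the zero set of a \emph{homogeneous} harmonic polynomial of some fixed degree $k$. This completely sidesteps the need to analyze a general (non-homogeneous) $u_\infty$ by extracting its lowest-order part, controlling the tangent cone of a Lipschitz graph, and then running a separate blow-down at infinity to control the highest-order part --- all steps that you sketch but that require real care to carry out. Second, once it is known that every blowup is $1$-homogeneous, the paper invokes Badger's Theorem 6.8 to obtain vanishing Reifenberg flatness directly, rather than the compactness argument you propose (which is fine in spirit but needs to be done uniformly on compacta).

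The acknowledged gap in your argument --- ruling out $m \geq 3$ for $n \geq 3$ --- is exactly the crux, and you do not close it. You correctly identify that Lewy-type homogeneous harmonic polynomials of odd degree $\geq 3$ in $\R^3$ divide $\mathbb{S}^2$ into two components, so the parity argument alone does not eliminate them. Asserting that ``the Lipschitz graph property is exactly what should preclude the conical singularities'' is not a proof. This is precisely the observation the paper leans on (``the zero set of a $k$-homogeneous polynomial is a graph domain if and only if $k=1$''), but in the paper's proof this observation is applied only after Badger's Theorem 1.1 has already forced the blowup to be homogeneous; the paper does not need to deal with lowest-order parts, tangent cones of Lipschitz graphs, or blow-downs. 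In your version, by contrast, the rigidity claim must be justified both for the tangent cone at $0$ and for the leading cone at infinity, and neither is established. The normalization $u_\infty = x_n$ (rather than $cx_n$) at the end also requires the argument from the paper that $\omega_\infty(B(0,1)) = 1$, which you do not mention.

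In short: your proposal identifies the right ingredients (Lipschitz is scale-invariant and passes to Hausdorff limits; one must rule out higher-degree homogeneous polynomials), but it substitutes a direct-analysis route for the paper's use of Badger's structural theorems and, in doing so, leaves the key rigidity step unresolved for $n \geq 3$. You should either import Badger's Theorem 1.1 to reduce immediately to the homogeneous case, or else give a complete proof that no $k$-homogeneous harmonic polynomial with $k \geq 2$ can have a zero set that is a Lipschitz graph through the origin in $\R^n$, $n\geq 3$.
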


\begin{proof}
We first recall Remark \ref{lipschitzdomain}, which states that any Lipschitz domain is a (locally) 2-sided NTA domain. Therefore, the conditions of Theorem \ref{blowuplimits} are satisfied. A result of Badger (Theorem 6.8 in \cite{badgerharmonicpolynomial}) says that, under the assumptions of Theorem \ref{blowuplimits}, the set of points where all {\it blowups} are 1-homogenous polynomials is in fact vanishing Reifenberg flat (``locally Reifenberg flat with vanishing constant" in the terminology of \cite{badgerharmonicpolynomial}). Additionally, graph domains (i.e. domains whose boundaries are locally the graph of a function) are closed under blowups, so all blowups of $\partial \Omega$ can be written locally as the graph of a some function. Observe that the zero set of a $k$-homogenous polynomial is a graph domain if and only if $k =1$. In light of all the above, it suffices to show that all blowups of $\partial \Omega$ are given by the zero set of a homogenous harmonic polynomial. We now recall another result of Badger. 
 
 \begin{thm*}[\cite{badgerharmonicmeasure}, Theorem 1.1]
If $\Omega$ is an NTA domain with harmonic measure $\omega$ and $Q\in \partial \Omega$, then $\mathrm{Tan}(\omega, Q) \subset P_d \Rightarrow \mathrm{Tan}(\omega, Q) \subset F_k$ for some $1 \leq k \leq d$. $P_d$ is the set of harmonic measures associated to a domain of the form  $\{h > 0\}$, where $h$ is a harmonic polynomial of degree $\leq d$. $F_k$ is the set of harmonic measures associated to a domain of the form $\{h > 0\}$, where $h$ is a homogenous harmonic polynomial of degree $k$.
 \end{thm*}  
 
In other words, if every blowup of an NTA domain is the zero set of a degree $\leq d$ harmonic polynomial, then every blowup of that domain is the zero set of a $k$-homogenous harmonic polynomial. This result, combined with Theorem \ref{blowuplimits}, immediately implies that all blowups of $\partial \Omega$ are given by the zero set of a $k$-homogenous harmonic polynomial. By the arguments above, $k =1$ and $\partial \Omega$ is vanishing Reifenberg flat. 

That $u_\infty = x_n$ (as opposed to $kx_n$ for some $k \neq 1$) follows from the fact that $\omega_\infty(B(0,1)) =\lim_i \omega_i(B(0,1)) \equiv 1$, and that $u_\infty^{\pm}$ is the Green's function associated to $\omega_\infty$.  
\end{proof}

Hereafter, we can assume, without loss of generality, that $\Omega$ is a vanishing Reifenberg flat domain and that all pseudo-blowups are 1-homogenous polynomials. 

\section{$u$ is Lipschitz}\label{sec: u is lipschitz}

The main aim of this section is to prove that $u$ is locally Lipschitz.\footnote{NB: In this section we need only assume that $\log(h) \in C(\partial \Omega)$.}  We adapt the method of Alt, Caffarelli and Friedman (\cite{acf}, most pertinently Section 5) which uses the following monotonicity formula to establish Lipschitz regularity for an ``additive" two phase free boundary problem. 

\begin{thm}\label{monotonicity}[\cite{acf}, Lemma 5.1] Let $f$ be any function in $C^0(B(x_0,R))\cap W^{1,2}(B(x_0,R))$ where $f(x_0) = 0$ and $f$ is harmonic in $B(x_0,R) \backslash \{f = 0\}$. Then $$J(x,r) \colonequals \frac{1}{r^2}\left(\int_{B(x,r)} \frac{|\nabla f^+|^2}{|x-y|^{n-2}}dy\right)^{1/2}\left(\int_{B(x,r)} \frac{|\nabla f^-|^2}{|x-y|^{n-2}}dy\right)^{1/2}$$ is increasing in $r \in (0, R)$ and is finite for all $r$ in that range.
\end{thm}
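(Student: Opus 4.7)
I would follow the classical Alt--Caffarelli--Friedman argument. After translating so that $x = x_0 = 0$, set
\[
I^{\pm}(r) \colonequals \int_{B(0,r)} \frac{|\nabla f^{\pm}|^2}{|y|^{n-2}} \, dy,
\]
so that $J(0,r)^2 = r^{-4} I^+(r) I^-(r)$, and monotonicity of $J$ is equivalent to monotonicity of $\log(r^{-4} I^+ I^-)$. Because $f^\pm$ are continuous subharmonic functions in $W^{1,2}$, both $I^\pm$ are absolutely continuous in $r$, and by the coarea formula
\[
(I^\pm)'(r) = r^{2-n} \int_{\partial B(0,r)} |\nabla f^{\pm}|^2 \, d\sigma.
\]

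\textbf{Rayleigh quotient estimate.} The crux is the pointwise-in-$r$ inequality
\[
I^\pm(r) \leq \frac{r}{2\alpha_\pm(r)} \, (I^\pm)'(r),
\]
where $\alpha_\pm(r) > 0$ is the positive root of $\alpha(\alpha + n - 2) = \lambda(\Sigma_\pm(r))$, and $\lambda(\Sigma_\pm(r))$ denotes the first Dirichlet eigenvalue of $-\Delta_{\mathbb S^{n-1}}$ on the (rescaled) spherical open set $\Sigma_\pm(r) \colonequals \{f^\pm > 0\} \cap \partial B(0,r)$. To prove it, I would multiply the subharmonicity inequality $\Delta f^\pm \geq 0$ (in the distributional sense, valid since $f$ is harmonic off $\{f=0\}$ and continuous) by $f^\pm |y|^{2-n}$, integrate by parts on $B(0,r)$, and apply the variational characterization of $\lambda(\Sigma_\pm(r))$ to the trace of $f^\pm$ on $\partial B(0,r)$ after rescaling to the unit sphere. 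Combining this with the derivative formula for $I^\pm$ gives
\[
\frac{d}{dr}\log\bigl(r^{-4} I^+ I^-\bigr) = -\frac{4}{r} + \frac{(I^+)'}{I^+} + \frac{(I^-)'}{I^-} \;\geq\; \frac{2\bigl(\alpha_+(r) + \alpha_-(r) - 2\bigr)}{r}.
\]

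\textbf{Friedland--Hayman and finiteness.} The proof concludes by invoking the Friedland--Hayman inequality: whenever $\Sigma_+, \Sigma_- \subset \mathbb S^{n-1}$ are disjoint open sets, $\alpha(\Sigma_+) + \alpha(\Sigma_-) \geq 2$, with equality precisely when $\Sigma_\pm$ are opposite hemispheres. This spherical eigenvalue inequality, proved via symmetrization on the sphere to reduce to the case of caps and then a cap-rearrangement comparison, is by far the deepest ingredient and the main obstacle in the argument; everything above is essentially bookkeeping once it is in hand. Finally, finiteness of $J(0,r)$ for each $r \in (0,R)$ follows from a Caccioppoli-type bound on $\int_{B(0,\rho)} |\nabla f^\pm|^2$ (in terms of $\rho^{-2} \int_{B(0,2\rho)} (f^\pm)^2$) combined with the continuity of $f$ and $f(0) = 0$, which forces $f^\pm(y) = o(1)$ near the origin and thereby tames the singularity of the weight $|y|^{2-n}$ against $|\nabla f^\pm|^2$.
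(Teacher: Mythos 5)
The paper does not prove this statement; it is quoted verbatim as \cite{acf}, Lemma 5.1, and used as a black box. Your outline correctly reproduces the classical Alt--Caffarelli--Friedman argument for the monotonicity: the reduction to $\log(r^{-4}I^+I^-)$, the Rayleigh-quotient inequality $I^\pm(r) \leq \frac{r}{2\alpha_\pm(r)}(I^\pm)'(r)$ obtained by pairing $\Delta f^\pm \geq 0$ with $f^\pm|y|^{2-n}$ and invoking the spectral characterization of $\alpha_\pm$, and the Friedland--Hayman inequality $\alpha_+ + \alpha_- \geq 2$. That is the right proof and the right decomposition.

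The finiteness step, however, does not close as you have written it. Caccioppoli on a dyadic annulus $A_k \colonequals B_{2^{-k}}\setminus B_{2^{-k-1}}$ gives $\int_{A_k}|\nabla f^\pm|^2 \lesssim 2^{k(2-n)}\sup_{B_{2^{-k+1}}}(f^\pm)^2$, hence $\int_{A_k}|\nabla f^\pm|^2\,|y|^{2-n}\,dy \lesssim \omega(2^{-k+1})^2$, where $\omega$ is the modulus of continuity of $f$ at $0$. Continuity and $f(0)=0$ give $\omega(2^{-k}) \to 0$, but nothing makes $\sum_k \omega(2^{-k})^2$ converge: take $\omega(t)\sim |\log t|^{-1/2}$ and the dyadic sum diverges like $\sum_k 1/k$. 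So ``$f^\pm = o(1)$ near the origin'' is genuinely too weak to tame the weight by this route. The fix is that you do not need a separate argument: the same Green's-identity integration by parts used for the Rayleigh-quotient step, carried out on $B_r\setminus B_\varepsilon$ and then letting $\varepsilon \downarrow 0$, bounds $\int_{B_r\setminus B_\varepsilon}|\nabla f^\pm|^2|y|^{2-n}\,dy$ uniformly in $\varepsilon$ by the boundary data on $\partial B_r$ (the inner-boundary terms are controlled by $\sup_{B_\varepsilon}(f^\pm)^2 \to 0$ together with the subharmonicity of $(f^\pm)^2$, and $\int_{B_r\setminus B_\varepsilon}(f^\pm)^2\Delta(|y|^{2-n}) = 0$). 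Finiteness of $I^\pm(r)$ then follows by monotone convergence, with no need for a Caccioppoli-plus-modulus argument.
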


In a 2-sided NTA domain, $u\in C^0(B(Q,R))\cap W^{1,2}(B(Q,R))$ for any $Q \in \partial \Omega$ and any $R$ (as such domains are ``admissible" see \cite{kenigpreisstoro}, Lemma 3.6). This monotonicity immediately implies upper bounds on $\frac{\omega^{\pm}(B(Q,r))}{r^{n-1}}$.

\begin{cor}\label{upperboundondensity}
Let $K\subset \subset \R^n$ be compact. There is a $0 < C\equiv C_{K,n} < \infty$ such that $$\sup_{0 < r \leq 1}\sup_{Q\in K \cap \partial \Omega} \frac{\omega^{\pm}(B(Q,r))}{r^{n-1}} < C.$$
\end{cor}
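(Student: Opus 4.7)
The strategy is to apply Theorem \ref{monotonicity} to $u = u^+ - u^-$ and convert the monotone quantity $J(Q,r)$ into the desired bound on the surface density. I first verify the hypotheses at any $Q \in \partial\Omega$: since $\Omega$ is $2$-sided NTA (hence admissible, cf.\ \cite{kenigpreisstoro}) we have $u \in C^0(B(Q,R)) \cap W^{1,2}(B(Q,R))$; $u(Q) = 0$ because both Green's functions vanish on $\partial\Omega$; and $u$ is harmonic on $\{u \neq 0\} = \Omega^+\cup\Omega^-$. Theorem \ref{monotonicity} then gives $J(Q,r) \leq J(Q,R)$ for all $0 < r \leq R$.

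Next, I would prove the reverse-direction estimate relating $J(Q,r)$ to the densities. The corkscrew condition for $\Omega^{\mp}$ yields a set of measure $\gtrsim r^n$ inside $B(Q,r)$ on which $u^{\pm} \equiv 0$, so a Poincar\'e--Friedrichs inequality gives $\int_{B(Q,r)} (u^{\pm})^2\, dy \lesssim r^2 \int_{B(Q,r)}|\nabla u^{\pm}|^2\, dy$. Pairing this with Harnack on a Whitney ball around the corkscrew point $A_r(Q) \in \Omega^{\pm}$ (which yields $\int_{B(Q,r)} (u^{\pm})^2\, dy \gtrsim r^n\, u^{\pm}(A_r(Q))^2$) and using the trivial bound $|y-Q|^{-(n-2)} \geq r^{-(n-2)}$ on $B(Q,r)$ produces
\[
\int_{B(Q,r)} \frac{|\nabla u^{\pm}|^2}{|y-Q|^{n-2}}\, dy \gtrsim u^{\pm}(A_r(Q))^2.
\]
The classical NTA identity $u^{\pm}(A_r(Q)) \sim r^{-(n-2)}\, \omega^{\pm}(B(Q,r))$ (see \cite{jerisonandkenig}) then translates this into
\[
J(Q,r) \gtrsim \frac{\omega^+(B(Q,r))}{r^{n-1}}\cdot\frac{\omega^-(B(Q,r))}{r^{n-1}}.
\]
Since $\log h$ is continuous on $\partial\Omega$, $h$ is bounded above and below by positive constants on the compact set $K$, so $\omega^-(B(Q,r)) \sim \omega^+(B(Q,r))$ for $Q \in K$ and $r$ small; the right side therefore dominates $(\omega^\pm(B(Q,r))/r^{n-1})^2$.

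It remains to bound $J(Q,R)$ from above at some fixed scale $R = R_K$ depending on $K$. Choose $R_K$ so that $B(Q, R_K)$ lies inside a fixed enlarged compact $K' \supset K$ for every $Q \in K$; since $u^{\pm}$ is bounded on $K'$ and $u \in W^{1,2}_{\mathrm{loc}}$, a Caccioppoli estimate on a dyadic decomposition of $B(Q, R_K)$ into annuli centered at $Q$ (together with the a priori finiteness of $J(Q, R_K)$ guaranteed by Theorem \ref{monotonicity}) produces a uniform upper bound $J(Q, R_K) \leq C_K$. Combining this with the previous paragraph gives $(\omega^{\pm}(B(Q,r))/r^{n-1})^2 \lesssim J(Q,r) \leq J(Q, R_K) \leq C_K$ for all $Q \in K$ and $0 < r \leq 1$, which is the claim.

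The main obstacle is the reverse-direction step: although Poincar\'e--Friedrichs, Harnack, and the NTA $u$-$\omega$ comparability are each classical, one must carefully track constants so that they depend only on $K$ and the NTA constants of $\Omega$, uniformly in $Q$ and $r$. The uniform upper bound on $J$ at the fixed scale is also delicate because of the singular weight $|y-Q|^{-(n-2)}$, but is readily handled by standard dyadic estimates once local boundedness of $u^{\pm}$ is used.
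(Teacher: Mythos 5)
Your overall strategy --- control the ACF quantity $J(Q,r)$ via Theorem \ref{monotonicity}, show $J$ dominates the product $\frac{\omega^+(B(Q,r))}{r^{n-1}}\frac{\omega^-(B(Q,r))}{r^{n-1}}$, and split that product using boundedness of $h$ on compacta --- is exactly the paper's. Your explicit derivation of the lower bound $J(Q,r) \gtrsim \frac{\omega^+(B(Q,r))\,\omega^-(B(Q,r))}{r^{2n-2}}$ via Poincar\'e--Friedrichs, Harnack, and the Jerison--Kenig comparison $u^{\pm}(A_r(Q)) \sim r^{-(n-2)}\omega^{\pm}(B(Q,r))$ is correct; the paper simply cites Remark 3.1 of \cite{kenigpreisstoro} here without unwinding the argument. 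The final step converting the product bound into a bound on each factor (using that $\log h$ is continuous, hence bounded above and below, on compacta) is also exactly the paper's.

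There is, however, a genuine gap in your uniform upper bound on $J(Q,R_K)$. A Caccioppoli estimate on the annulus $A_j = B(Q,2^{-j}R_K)\setminus B(Q,2^{-j-1}R_K)$ gives
\begin{equation*}
\int_{A_j}\frac{|\nabla u^{\pm}|^2}{|y-Q|^{n-2}}\,dy \lesssim (2^{-j}R_K)^{-(n-2)}(2^{-j}R_K)^{-2}\int_{\widetilde{A}_j}(u^{\pm})^2\,dy,
\end{equation*}
and if all one uses is $\|u^{\pm}\|_{L^\infty(K')}\leq M$, the right side is $\lesssim M^2$ for \emph{every} $j$, so the dyadic sum diverges logarithmically: local boundedness alone does not make the singular weight integrable at $Q$. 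Nor does the ``a priori finiteness guaranteed by Theorem \ref{monotonicity}'' rescue this --- it gives $J(Q,R_K)<\infty$ for each fixed $Q$ but says nothing about uniformity in $Q\in K$, which is the content of the Corollary. To close the argument one needs the boundary H\"older decay of the Green's functions in an NTA domain: $\|u^{\pm}\|_{L^\infty(B(Q,\rho))}\lesssim (\rho/R_K)^\beta M$ for some $\beta=\beta(n,M_{\mathrm{NTA}})>0$ (a consequence of \cite{jerisonandkenig}, Lemmas 4.1 and 4.4). With this decay each dyadic term picks up a factor $(2^{-2\beta})^j$ and the sum converges geometrically, with constant depending only on $n$, $M$, and the NTA constants --- uniformly over $Q\in K$. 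Alternatively, one can just cite \cite{kenigpreisstoro} Remark 3.1, as the paper does, which packages both directions into the single estimate $\frac{\omega^+(B(Q,r))}{r^{n-1}}\frac{\omega^-(B(Q,r))}{r^{n-1}} \leq C\|u\|_{L^2(B(Q,4))}$.
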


\begin{proof}
Using the Theorem \ref{monotonicity} one can prove that $$\frac{\omega^{+}(B(Q, r))}{r^{n-1}}\frac{\omega^-(B(Q,r))}{r^{n-1}} \leq C\|u\|_{L^2(B(Q, 4))},\; \forall 0 < r \leq 1,$$ (see Remark 3.1 in \cite{kenigpreisstoro}). Note that $$\sup_{1 \geq r > 0, Q\in \partial \Omega \cap K} \left(\frac{\omega^{\pm}(B(Q, r))}{r^{n-1}}\right)^2 = \sup_{1 \geq r > 0, Q\in \partial \Omega \cap K} \frac{\omega^+(B(Q,r))}{r^{n-1}}\frac{\omega^-(B(Q,r))}{r^{n-1}} \frac{\omega^{\pm}(B(Q,r))}{\omega^{\mp}(B(Q,r))} \leq $$$$\sup_{P\in \partial \Omega,\; \mathrm{dist}(P,K) \leq 1}h^{\mp 1}(P) \sup_{1 \geq r > 0, Q\in \partial \Omega \cap K} \frac{\omega^{+}(B(Q,r))}{r^{n-1}}\frac{\omega^-(B(Q, r))}{r^{n-1}}.$$  By continuity, $\log(h)$ is  bounded on compacta and so we are done.
\end{proof}

Blowup analysis connects the Lipschitz continuity of $u$ to the boundedness of $\frac{\omega^{\pm}(B(Q,r))}{r^{n-1}}$. 

\begin{lem}\label{boundedaverage}
Let $K\subset \subset \R^n$ be compact, $Q \in K\cap \partial \Omega$ and $1 \geq r > 0$. Then there is a constant $C > 0$ (which depends only on dimension and $K$) such that $$\frac{1}{r}\fint_{\partial B(Q, r)} |u| < C.$$
\end{lem}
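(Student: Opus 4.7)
The plan is to argue by contradiction, using a pseudo-blowup argument to combine Corollary \ref{upperboundondensity} with Corollary \ref{blowupsforeveryone}. Suppose no such constant exists, so there are sequences $Q_j \in K \cap \partial\Omega$ and $r_j \in (0,1]$ with $\tfrac{1}{r_j}\fint_{\partial B(Q_j, r_j)} |u| \to \infty$. First I would dispose of the case where $r_j$ stays bounded away from zero: then $\overline{B(Q_j, r_j)}$ is contained in a fixed compact neighborhood of $K$, on which $u$ is continuous and hence uniformly bounded, forcing $\tfrac{1}{r_j}\fint |u|$ to remain bounded. So we may assume $r_j \downarrow 0$, and after passing to a subsequence, $Q_j \to Q_\infty \in K \cap \partial\Omega$.

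Now I would invoke the pseudo-blowup along $(Q_j, r_j)$. Since $u^+$ and $u^-$ have disjoint supports, $|u| = u^+ + u^-$, and a change of variables $y = r_j x + Q_j$ on the sphere gives
\begin{equation*}
\frac{1}{r_j}\fint_{\partial B(Q_j, r_j)} |u|\, d\sigma = \frac{\omega^+(B(Q_j, r_j))}{r_j^{n-1}} \fint_{\partial B(0,1)} u_j^+ \, d\sigma + \frac{\omega^-(B(Q_j, r_j))}{r_j^{n-1}} \fint_{\partial B(0,1)} u_j^- \, d\sigma,
\end{equation*}
where $u_j^{\pm}$ are the normalized pseudo-blowups defined in \eqref{blowup}. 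Corollary \ref{upperboundondensity} bounds each density factor uniformly in $j$. On the other hand, Corollary \ref{blowupsforeveryone} (together with Theorem \ref{blowuplimits}) provides a subsequence along which $u_j^{\pm} \to (x_n)^\pm$ uniformly on the compact set $\partial B(0,1)$, so the sphere averages converge to the explicit finite constant $\fint_{\partial B(0,1)} (x_n)^\pm\, d\sigma$. Combining these two facts yields a uniform upper bound on the quantity in question, contradicting our standing assumption.

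The main obstacle is purely conceptual: making sure the pseudo-blowup framework (centers allowed to vary with $j$) applies here rather than the stricter blowup framework (fixed center). This is exactly the generality in which Theorem \ref{blowuplimits} and Corollary \ref{blowupsforeveryone} are stated, so no extra work is needed. The only other subtlety is that the density bound of Corollary \ref{upperboundondensity} is stated for $r \leq 1$, which is precisely the range we are working in, so the passage to a subsequence with $r_j \downarrow 0$ is harmless. Everything else is just a routine change of variables and an appeal to uniform convergence on a compact set.
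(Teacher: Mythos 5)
Your proof is correct, but it takes a genuinely different and heavier route than the paper's. The paper argues directly: by the standard NTA boundary Harnack / corkscrew estimates (Jerison--Kenig, Lemmas~4.4 and 4.8), $u^{\pm}(y)\le C_K\,u^{\pm}(A_{\pm}(Q,r))\le C_K\,\omega^{\pm}(B(Q,r))/r^{n-2}$ for $y\in\partial B(Q,r)$, which immediately yields $\frac1r\fint_{\partial B(Q,r)}|u|\le C_K\bigl(\tfrac{\omega^+(B(Q,r))}{r^{n-1}}+\tfrac{\omega^-(B(Q,r))}{r^{n-1}}\bigr)$, and Corollary~\ref{upperboundondensity} finishes the job. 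This is a one-line bound with an explicit constant, no compactness. Your argument instead proceeds by contradiction, normalizing to the pseudo-blowups and invoking the compactness of Theorem~\ref{blowuplimits} (and Corollary~\ref{blowupsforeveryone}) to control $\fint_{\partial B(0,1)}u_j^{\pm}$; this works, and your change of variables and case split on $r_j$ are fine, but it brings in the full Kenig--Toro blowup machinery when only elementary NTA estimates are needed. Two minor observations: (i) you don't actually need $u_\infty^{\pm}=(x_n)^{\pm}$ from Corollary~\ref{blowupsforeveryone} --- Theorem~\ref{blowuplimits} alone already gives that $u_\infty$ is a harmonic polynomial, hence bounded on $\partial B(0,1)$, which is all your contradiction requires; by leaning on Corollary~\ref{blowupsforeveryone} you are invoking flatness/Lipschitz hypotheses that this lemma does not need (the paper's footnote in Section~\ref{sec: u is lipschitz} emphasizes that only $\log h\in C(\partial\Omega)$ is used here). (ii) The paper's proof produces the constant constructively, which is the cleaner statement given that the lemma asserts a bound "depending only on dimension and $K$."
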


\begin{proof}
We rewrite $\frac{1}{r}\fint_{\partial B(Q, r)} |u| = \frac{1}{r} \fint_{\partial B(0, 1)} |u(ry + Q)| d\sigma(y)$. Standard estimates on NTA domains imply $u^{\pm}(ry + Q) \leq C_K u^{\pm}(A_\pm(Q, r)) \leq C_K\frac{\omega^{\pm}(B(Q, r))}{r^{n-2}}$ (see \cite{jerisonandkenig}, Lemmas 4.4 and 4.8). So $$\frac{1}{r}\fint_{\partial B(Q, r)} |u| \leq C_K \left(\frac{\omega^+(B(Q,r))}{r^{n-1}} + \frac{\omega^-(B(Q, r))}{r^{n-1}}\right).$$

Corollary \ref{upperboundondensity} implies the desired result.
\end{proof}

We then prove Lipschitz continuity around the free boundary. 

\begin{prop}\label{locallylipschitz}
If $K\subset\subset \R^n$ is compact then $|Du(x)| < C \equiv C(n,K) < \infty$ a.e. in $K$.
 \end{prop}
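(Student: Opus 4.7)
My plan is to combine the mean value inequality for the subharmonic functions $u^\pm$, the sphere-average growth bound of Lemma~\ref{boundedaverage}, and the standard interior gradient estimate for harmonic functions. Since the $2$-sided corkscrew condition forces the Lebesgue density of $\Omega^+$ and of $\Omega^-$ at every boundary point to be bounded away from $0$, the Lebesgue density theorem gives $|\partial\Omega|=0$, so it suffices to establish the bound pointwise on $K\cap(\Omega^+\cup\Omega^-)$; by symmetry I will work with $x\in K\cap\Omega^+$. First I would check that $u^+$ is subharmonic on $\R^n$: it is continuous, non-negative, harmonic on $\Omega^+=\{u^+>0\}$, and identically zero elsewhere, so comparison with the Poisson extension on any ball gives the mean value inequality directly (no regularity of $\partial\Omega$ needed).

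The core estimate then runs as follows. Enlarge $K$ to a compact $K'$ containing a unit neighborhood of $K$, and let $C_{K'}$ be the constant from Lemma~\ref{boundedaverage} applied on $K'$. Fix $x\in K\cap\Omega^+$, set $d\colonequals\mathrm{dist}(x,\partial\Omega)$, and pick $Q\in\partial\Omega$ with $|x-Q|=d$; then $Q\in K'$ whenever $d\leq 1/4$. On $B(x,d)\subset\Omega^+$, $u^+$ is harmonic, so the interior gradient estimate gives
\[
|\nabla u^+(x)|\;\leq\;\frac{C_n}{d}\sup_{B(x,d/2)}u^+.
\]
Since $B(x,d/2)\subset B(Q,2d)$ and $u^+$ is non-negative and subharmonic, the mean value inequality produces
\[
\sup_{B(Q,2d)}u^+\;\leq\;C_n\fint_{\partial B(Q,4d)}u^+\,d\sigma\;\leq\;C_n\fint_{\partial B(Q,4d)}|u|\,d\sigma,
\]
and Lemma~\ref{boundedaverage} bounds the last quantity by $C_{K'}\cdot 4d$. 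Chaining the three inequalities, the factors of $d$ cancel and we obtain $|\nabla u^+(x)|\leq C(n,K)$, as desired.

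The case $x\in K\cap\Omega^-$ is symmetric. For $x\in K$ with $d\geq 1/4$, the standard interior gradient estimate on a fixed-radius ball, combined with the $L^\infty$ bound on $u^\pm$ over $K'$ (which itself follows from the subharmonicity of $u^\pm$ and Lemma~\ref{boundedaverage}), gives the bound directly. The only real subtlety is checking subharmonicity of $u^\pm$ across the rough boundary $\partial\Omega$, which I would handle via the Poisson comparison on balls sketched above; the remainder is bookkeeping. Note that the Alt--Caffarelli--Friedman monotonicity of Theorem~\ref{monotonicity} enters only indirectly in this argument, through its role in the proof of Corollary~\ref{upperboundondensity} and hence Lemma~\ref{boundedaverage}.
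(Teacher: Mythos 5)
Your proposal is correct and follows essentially the same approach as the paper: both fix $x\in\Omega^\pm$, apply the interior gradient estimate on the ball $B(x,\mathrm{dist}(x,\partial\Omega))$, control the resulting quantity by the sphere average of $|u|$ over a concentric sphere centered at the nearest boundary point $Q$ of comparable radius (via subharmonicity of $|u|$, resp.\ $u^\pm$), and then invoke Lemma~\ref{boundedaverage} so that the factors of $d$ cancel. The paper bounds $|u(y)|$ by a Poisson integral over $\partial B(Q,\sigma)$ while you use the local sup estimate for nonnegative subharmonic functions, but these are interchangeable, and your remark that the corkscrew condition forces $|\partial\Omega|=0$ makes explicit what the paper leaves implicit in its opening sentence.
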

 
 \begin{proof}
As $u$ is analytic away from $\partial \Omega$ and $u \equiv 0$ on $\partial \Omega$ we can conclude that $Du$ exists a.e.
 
Pick $x \in K$ and, without loss of generality, let $x\in \Omega^+$. Define $\rho(x) \colonequals \mathrm{dist}(x, \partial \Omega)$ and let $Q \in \partial \Omega$ be such that $\rho(x) = |x-Q|$. If $\rho > 1/5$ then elliptic regularity implies $|Du(x)| \leq C(n, K)$. 

So we may assume that $\rho < 1/5$. A standard estimate yields  \begin{equation}\label{dubound}
|Du(x)| \leq \frac{C}{\rho}\fint_{\partial B(x, \rho)}|u(y)|d\sigma(y).
\end{equation}

We may pick $3\rho < \sigma < 5\rho$ such that $y\in \partial B(x,\rho) \Rightarrow y\in B(Q,\sigma)$. As $|u|$ is subharmonic and $\mathrm{dist}(y,\partial B(Q,\sigma)) > \sigma/3$ we may estimate $$|u(y)| \leq c(n)\int_{\partial B(Q, \sigma)} \frac{\sigma^2 - |y-Q|^2}{\sigma |y-z|^n}|u(z)|d\sigma(z) \leq c\fint_{\partial B(Q,\sigma)} |u(z)|d\sigma(z) \stackrel{\mathrm{Lem}\;\ref{boundedaverage}}{\leq} C\sigma \leq C'\rho.$$

This estimate, with \eqref{dubound}, implies the Lipschitz bound.
 \end{proof}
 
Consider any pseudo-blowup $Q_j \rightarrow Q, r_j \downarrow 0$. It is clear that $u_j$ is a Lipschitz function (though perhaps not uniformly in $j$). If $\phi \in C_c^\infty(B_1; \R^n)$ then Corollary \ref{blowupsforeveryone} implies (after a possible rotation) $$\int \phi \cdot \nabla u_{j}^{\pm} = -\int (\nabla \cdot \phi) u_{j}^{\pm} \stackrel{j\rightarrow \infty}{\rightarrow} -\int(\nabla \cdot \phi) (x_n)^{\pm} = \int \phi \cdot e_n \chi_{\mathbb H^{\pm}}.$$ 

Because $\nabla u_j^{\pm}$ converges in the weak-$*$ topology on $L^\infty(B_1; \R^n)$, $|\nabla u_j^{\pm}|$ is bounded in $L^\infty(B_1)$. Therefore, $|\nabla u_j^{\pm}|$ converges in the weak-$*$ topology on $L^\infty(B_1)$ to some function $f$. However, as $\nabla u_j^{\pm}\stackrel{*}{\rightharpoonup} e_n  \chi_{\mathbb H^{\pm}}$ it must be true that $|\nabla u_j^{\pm}|$ converges pointwise to $\chi_{\mathbb H^{\pm}}$ and thus $f= \chi_{\mathbb H^{\pm}}$ (more generally, converges to the indicator function of some half space which may depend on the blowup sequence taken). 
 
The existence of this weak-$*$ limit allows us to prove that $\Theta^{n-1}(\omega^{\pm},Q) \colonequals \lim_{r\downarrow 0}\frac{\omega^{\pm}(B(Q,r))}{r^{n-1}}$ exists, and is finite, everywhere on $\partial \Omega$ (as opposed to $\mathcal H^{n-1}$-almost everywhere). Let $r_j \downarrow 0$; one can compute that $J(Q,r_j) = \frac{\omega^{+}(B(Q,r_j))}{r_j^{n-1}}\frac{\omega^-(B(Q,r_j))}{r_j^{n-1}} J_{Q,r_j}(0,1)$ where $$J_{Q,r_j}(0,s) \colonequals  \frac{1}{s^2}\left(\int_{B(0,s)} \frac{|\nabla u_{j}^+(y)|^2}{|y|^{n-2}}dy\right)^{1/2}\left(\int_{B(0,s)} \frac{|\nabla u_{j}^-(y)|^2}{|y|^{n-2}}dy\right)^{1/2}$$ and $u_j$ is a blowup along the sequence $Q_j \equiv Q$ and $r_j \downarrow 0$. By the arguments above, $|\nabla u_j^{\pm}|^2$ converges in the weak-$*$ topology to the indicator function of some halfspace. Therefore, $J_{Q,r_j}(0,1) \stackrel{j\rightarrow \infty}{\rightarrow} c(n)$, where $c(n)$ is some constant independent of $r_j\downarrow 0$ (the halfspace may depend on the sequence, but the integral does not).   Furthermore, by Theorem \ref{monotonicity} $J(Q,0) \colonequals \lim_{r\downarrow 0} J(Q,r)$ exists. It follows that $$\lim_{r\downarrow 0} \frac{\omega^{+}(B(Q,r))}{r^{n-1}}\frac{\omega^-(B(Q,r))}{r^{n-1}} = \frac{J(Q, 0)}{c(n)}.$$ In particular, the limit on the left exists for every $Q\in \partial \Omega$, which (given Remark \ref{rnderivative}) implies $\Theta^{n-1}(\omega^{\pm},Q)$ exists for every $Q\in \partial \Omega$. 

\section{Non-degeneracy of $\Theta^{n-1}(\omega^\pm, Q)$}\label{sec: thetanondegenerate}

In this section we show  $\Theta^{n-1}(\omega^\pm, Q) >0$ for all $Q\in \partial \Omega$ (Proposition \ref{nondegeneracy}). Let  \begin{equation}\label{vq} v^{(Q)}(x) \colonequals h(Q)u^+(x) - u^-(x),\; Q\in \partial \Omega.\end{equation} For any $r_j \downarrow 0$, we define the blowup of $v^{(Q)}$ along $r_j$ to be $v^{(Q)}_j(x) \colonequals \frac{r_j^{n-2} v^{(Q)}(r_jx + Q)}{\omega^-(B(Q, r_j))}$. Let us make some remarks concerning $v^{(Q)}$ and its blowups. 
\begin{rem}\label{vobservations} The following hold for any $Q\in \partial \Omega$. 
\begin{itemize}
\item For any compact $K$, we have $\sup_{Q\in K\cap \partial \Omega} \|v^{(Q)}\|_{W_{loc}^{1,\infty}(\R^n)} < \infty$. 
\item  $v^{(Q)}_j(x) \rightarrow x\cdot e_n$ uniformly on compacta (after passing to a subsequence and a possible rotation). Additionally (as above), we have $|\nabla v_j^{(Q)}|\stackrel{*}{ \rightharpoonup} 1$ in $L^\infty$. 
\item If the non-tangential limit of $|\nabla v^{(Q)}|$ at $Q$ exists it is equal to $\Theta^{n-1}(\omega^-, Q)$. 
\end{itemize}
\end{rem}

\begin{proof}[Justification of Remarks]
The first two statements follow from the work in Section \ref{sec: u is lipschitz}.  

To prove the third statement we first notice  \begin{equation}\label{blowupv}\nabla v^{(Q)}_j(x) = \frac{r_j^{n-1} \nabla v^{(Q)}(r_jx+Q)}{\omega^-(B(Q, r_j))}.\end{equation} The second statement implies $\lim_{j\rightarrow \infty} |\nabla v^{(Q)}_j(x)| = 1$ almost everywhere. The result follows.
 \end{proof}


\subsection{Almgren's Frequency Formula}\label{formulasandcomputations} Remark \ref{vobservations} hints at a connection between the degeneracy of $\Theta^{n-1}(\omega^-, Q)$ and that of the non-tangential limit of $\nabla v^{(Q)}$.  This motivates the use of Almgren's frequency function (first introduced in \cite{almgren}). 

\begin{defin}\label{almgrenfrequency}
 Let $f \in H^1_{\mathrm{loc}}(\R^n)$ and pick $x_0\in \{f= 0\}$. Define $$H(r, x_0, f) = \int_{\partial B_r(x_0)} f^2,$$ $$D(r,x_0,f) = \int_{B_r(x_0)} |\nabla f|^2,$$ and finally $$N(r,x_0,f) = \frac{rD(r,x_0,f)}{H(r,x_0,f)}.$$
\end{defin}

Almgren first noticed that when $f$ is harmonic, $r\mapsto N(r, x_0, f)$ is absolutely continuous and monotonically decreasing as $r\downarrow 0$. Furthermore, $N(0, x_0, f)$ is an integer and is the order to which $f$ vanishes at $x_0$ (these facts first appear in \cite{almgren}. See \cite{almgrennotes} for proofs and a gentle introduction). 

Throughout the rest of this subsection we consider $v \equiv v^{(Q)}$ and, for ease of notation, set $Q = 0$. $v$ may not be harmonic and thus $N(r, 0, v)$ may not be monotonic. However, in the sense of distributions, the following holds: \begin{equation}\label{distributionderivative}
\Delta v(x) = (h(0)d\omega^+-d\omega^-)|_{\partial \Omega} = \left(\frac{h(0)}{h(x)} - 1\right)d\omega^-|_{\partial \Omega}.
\end{equation}

Therefore, $\log(h) \in C^{\alpha}(\partial \Omega)$ implies that $|\Delta v(x)| \leq C|x|^\alpha d\omega^-|_{\partial \Omega}$. That  $v$ is ``almost harmonic" will imply that $N$ is ``almost monotonic" (see Lemma \ref{almostmonotonic}). 

When estimating $N'(r, 0, v)$ we reach a technical difficulty; {\it a priori} $v$ is merely Lipschitz, and so $\nabla v$ is not defined everywhere. To address this, we will work instead with $v_\varepsilon = v*\varphi_\varepsilon$, where $\varphi$ is a $C^\infty$ approximation to the identity (i.e. $\mathrm{supp}\; \varphi \subset B_1$ and $\int \varphi = 1$). Let $N_\varepsilon(r) \colonequals N(r, 0, v_\varepsilon)$ and similarly define $H_\varepsilon, D_\varepsilon$. 

\begin{rem}\label{basicalmgrenfacts}
The following are true:
\begin{equation*}
\begin{aligned}
&\lim_{r\downarrow 0} N(r,0,v) = 1\\
&D_\varepsilon(r) = \int_{\partial B_r}v_\varepsilon(v_\varepsilon)_\nu d\sigma - \int_{B_r} v_\varepsilon \Delta v_\varepsilon\\
&\frac{d}{dr}D_\varepsilon(r) = \frac{n-2}{r}\int_{B_r}|\nabla v_\varepsilon|^2dx + 2\int_{\partial B_r} (v_\varepsilon)_\nu^2 - \frac{2}{r}\int_{B_r} \left\langle x, \nabla v_\varepsilon\right\rangle \Delta v_\varepsilon dx \\
&\frac{d}{dr}H_\varepsilon(r) = \frac{n-1}{r}H_\varepsilon(r) + 2\int_{\partial B_r} v_\varepsilon (v_\varepsilon)_\nu d\sigma.
\end{aligned}
\end{equation*}
\end{rem}

\begin{proof}
The second equation follows from integration by parts and the third (originally observed by Rellich) can be obtained using the change of variables $y = x/r$. The final equation can be proven in the same way as the third. 

To establish the first equality we take blowups. Pick any $r_j \downarrow 0$. One computes, $$N(r_j, 0, v) = \frac{\int_{B_1} |\nabla v_j|^2}{\int_{\partial B_1} v_j^2}.$$ Recall Remark \ref{vobservations}; $v_j \rightarrow x_n$ uniformly on compacta and $|\nabla v_j| \stackrel{*}{\rightharpoonup} 1$ in $L^\infty$ (perhaps passing to subsequences and rotating the coordinate system). Therefore, $\lim_{j\rightarrow \infty} N(r_j, 0,v) = \lim_{j\rightarrow \infty} N(1, 0, v_j) = N(1, 0, x_n)$. Almgren (in \cite{almgren}) proved that if $p$ is a 1-homogenous polynomial then $N(r, 0, p) \equiv 1$ for all $r$. It follows that $\lim_{j \rightarrow \infty} N(r_j, 0, v) = 1$.
\end{proof}

With these facts in mind we  calculate $N'_\varepsilon(r)$. 

\begin{equation}\label{derivativeofN}\begin{aligned}
H_\varepsilon^2(r)N'_{\varepsilon}(r) &= 2r\left(\int_{\partial B_r}(v_\varepsilon)_\nu^2d\sigma \int_{\partial B_r} v_\varepsilon^2d\sigma -\left[\int_{\partial B_r} v_\varepsilon (v_\varepsilon)_\nu d\sigma\right]^2\right)\\ &+ 2r\int_{B_r} v_\varepsilon \Delta v_\varepsilon dx \int_{\partial B_r} v_\varepsilon (v_\varepsilon)_\nu d\sigma -  2H_\varepsilon(r) \int_{B_r} \left\langle x, \nabla v_\varepsilon\right\rangle \Delta v_\varepsilon dx
\end{aligned}
\end{equation}

\begin{proof}[Derivation of \eqref{derivativeofN}]
By the quotient rule
$$H_\varepsilon^2(r)N'_{\varepsilon}(r) = D_\varepsilon(r)H_\varepsilon(r) + rD'_\varepsilon(r)H_\varepsilon(r) - rD_\varepsilon(r)H'_\varepsilon(r).$$ 

Using the formulae for $H'_\varepsilon, D'_\varepsilon$ found in Remark \ref{basicalmgrenfacts} we rewrite the above as $$H_\varepsilon^2(r)N'_{\varepsilon}(r) = D_\varepsilon(r)H_\varepsilon(r) + rH_\varepsilon(r)\left(\frac{n-2}{r}\int_{B_r}|\nabla v_\varepsilon|^2dx + 2\int_{\partial B_r} (v_\varepsilon)_\nu^2 - \frac{2}{r}\int_{B_r} \left\langle x, \nabla v_\varepsilon\right\rangle \Delta v_\varepsilon dx\right)$$$$ - rD_\varepsilon(r)\left(\frac{n-1}{r}H_\varepsilon(r) + 2\int_{\partial B_r} v_\varepsilon (v_\varepsilon)_\nu d\sigma\right).$$

Distribute and combine terms to get $$H_\varepsilon^2(r)N'_{\varepsilon}(r) = \left(D_\varepsilon(r)H_\varepsilon(r) + (n-2)H_\varepsilon(r) \int_{B_r}|\nabla v_\varepsilon|^2dx- (n-1)D_\varepsilon(r)H_\varepsilon(r)\right)$$$$+2r\left(H_\varepsilon(r)\int_{\partial B_r} (v_\varepsilon)_\nu^2d\sigma - D_\varepsilon(r)\int_{\partial B_r}v_\varepsilon (v_\varepsilon)_\nu d\sigma\right)-2H_\varepsilon(r)\int_{B_r}\left\langle x, \nabla v_\varepsilon\right\rangle \Delta v_\varepsilon dx.$$

The first set of parenthesis above is equal to zero (recalling the definition of $D_\varepsilon(r)$). In the second set of parenthesis use the formula for $D_\varepsilon(r)$ found in Remark \ref{basicalmgrenfacts}. This gives us $$H_\varepsilon^2(r)N'_{\varepsilon}(r) = 2r\left(H_\varepsilon(r)\int_{\partial B_r}(v_\varepsilon)_\nu^2 d\sigma - \left(\int_{\partial B_r}(v_\varepsilon)_\nu v_\varepsilon d\sigma\right)^2\right)$$$$ + 2r\int_{B_r} v_\varepsilon \Delta v_\varepsilon dx \int_{\partial B_r} v_\varepsilon (v_\varepsilon)_\nu d\sigma -  2H_\varepsilon(r) \int_{B_r} \left\langle x, \nabla v_\varepsilon\right\rangle \Delta v_\varepsilon dx.$$
\end{proof}

 The difference in parenthesis on the right hand side of \eqref{derivativeofN} is positive by the Cauchy-Schwartz inequality. Thus, to establish a lower bound on $N_\varepsilon'(r)$, it suffices to consider the other terms in the equation. 
 
\begin{lem}\label{rellichsidentity}
Let $\varepsilon < r$ and  define $E_\varepsilon(r) = \int_{B_r} \left\langle x, \nabla v_\varepsilon\right\rangle \Delta v_\varepsilon dx$. Then there exists a constant $C$ (independent of $r, \varepsilon$) such that $ |E_\varepsilon(r)|  \leq C r^{1+\alpha} \omega^-(B(0,r))$.
\end{lem}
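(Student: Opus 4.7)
\begin{proof}[Proof proposal]
The plan is to express $\Delta v_\varepsilon$ as a convolution of the measure $\Delta v$ with $\varphi_\varepsilon$, swap orders of integration, and then estimate using (i) the local Lipschitz bound on $v$ from Proposition \ref{locallylipschitz} (applied to the fixed linear combination $v=h(0)u^+-u^-$), (ii) the pointwise estimate $|h(0)/h(y)-1|\le C|y|^\alpha$ coming from $\log h\in C^{0,\alpha}$ together with \eqref{distributionderivative}, and (iii) the doubling property of harmonic measure on NTA domains to replace $\omega^-(B(0,r+\varepsilon))$ by $\omega^-(B(0,r))$.

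Concretely, by \eqref{distributionderivative} we can write, for a smooth approximation $\varphi_\varepsilon$ to the identity with $\mathrm{supp}\,\varphi_\varepsilon\subset B_\varepsilon$,
\[
\Delta v_\varepsilon(x) = \int_{\partial\Omega}\varphi_\varepsilon(x-y)\Bigl(\tfrac{h(0)}{h(y)}-1\Bigr)\,d\omega^-(y).
\]
Substituting this into the definition of $E_\varepsilon(r)$ and applying Fubini gives
\[
E_\varepsilon(r)=\int_{\partial\Omega}\Bigl(\tfrac{h(0)}{h(y)}-1\Bigr)\,\Psi_\varepsilon(y)\,d\omega^-(y),
\qquad
\Psi_\varepsilon(y)\colonequals\int_{B_r}\langle x,\nabla v_\varepsilon(x)\rangle\,\varphi_\varepsilon(x-y)\,dx.
\]
Since $\varphi_\varepsilon(x-y)$ vanishes unless $|x-y|<\varepsilon$, the outer integral is effectively over $y\in\partial\Omega\cap B_{r+\varepsilon}(0)$. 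On this set, $|h(0)/h(y)-1|\le C|y|^\alpha\le C(r+\varepsilon)^\alpha\le C r^\alpha$ (since $\varepsilon<r$).

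To bound $\Psi_\varepsilon$, I use that $\nabla v_\varepsilon=(\nabla v)*\varphi_\varepsilon$ inherits the $L^\infty$ bound $\|\nabla v\|_{L^\infty(B_{2r})}\le C$ from Proposition \ref{locallylipschitz} (locally, with a constant depending only on $n$ and the relevant compact set), and $|x|\le r$ on $B_r$. Hence
\[
|\Psi_\varepsilon(y)|\le r\|\nabla v\|_{L^\infty(B_{2r})}\int_{B_r}\varphi_\varepsilon(x-y)\,dx \le Cr.
\]
Combining the two estimates,
\[
|E_\varepsilon(r)|\le C r^{\alpha}\cdot Cr\cdot\omega^-\bigl(\partial\Omega\cap B_{r+\varepsilon}(0)\bigr)\le Cr^{1+\alpha}\omega^-(B(0,2r)).
\]
Finally, since $\omega^-$ is doubling on the NTA domain $\Omega^-$ (Jerison–Kenig \cite{jerisonandkenig}), $\omega^-(B(0,2r))\le C\omega^-(B(0,r))$, which yields the claimed inequality.
\end{proof}

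The main obstacle is the handling of the mollification: one has to avoid putting derivatives on $\Delta v$, since this is only a measure on $\partial\Omega$. Writing $\Delta v_\varepsilon=(\Delta v)*\varphi_\varepsilon$ and swapping integrals converts everything into a single integral against $d\omega^-$, after which the $|y|^\alpha$ bound on $h(0)/h(y)-1$ and the Lipschitz bound on $v$ do essentially all the work. The doubling inequality for $\omega^-$ on NTA domains is what lets us absorb the enlargement from $B_r$ to $B_{r+\varepsilon}$ into a constant.
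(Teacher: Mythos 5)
Your proof is correct and follows essentially the same route as the paper's: both write $\Delta v_\varepsilon=(\Delta v)*\varphi_\varepsilon$, transfer the mollification to the other factor (your explicit Fubini is the paper's ``move the convolution from one term to the other''), then estimate using $|h(0)/h(y)-1|\le C|y|^\alpha$, the uniform Lipschitz bound on $v$, and the doubling property of $\omega^-$ to absorb the enlargement of the ball. The only cosmetic difference is that you bound the convolved factor $\Psi_\varepsilon$ directly by $Cr$, whereas the paper leaves it in the form $\int_{B_{r+\varepsilon}}(|\nabla v|_\varepsilon)_\varepsilon\,d\omega^-$ before invoking the Lipschitz bound.
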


\begin{proof}
Since $\Delta v_\varepsilon = (\Delta v)*\varphi_\varepsilon$ in terms of distributions, we can move the convolution from one term to the other:  $$\int_{B_r} \left\langle x, \nabla v_\varepsilon\right\rangle \Delta v_\varepsilon dx = \int  [(\chi_{B_r}(x)\left\langle x, \nabla v_\varepsilon \right\rangle)*\varphi_\varepsilon] \Delta  vdx.$$ Evaluate $\Delta v$, as in \eqref{distributionderivative}, to obtain $$\left|\int_{B_r} \left\langle x, \nabla v_\varepsilon\right\rangle \Delta v_\varepsilon dx \right| = \left|\int (\chi_{B_r}(x)\left\langle x, \nabla v_\varepsilon \right\rangle)_\varepsilon \left(\frac{h(0)}{h(x)}-1\right)d\omega^-\right|$$$$\leq C r^{1+\alpha} \int_{B_{r+\varepsilon}} (|\nabla v|_\varepsilon)_\varepsilon d\omega^-,$$ where the last inequality follows from $\log(h) \in C^\alpha$, and $|x| < C(r + \varepsilon) < Cr$ on the domain of integration. The desired estimate then follows from the Lipschitz continuity of $v$ and that the harmonic measure of an NTA domain is doubling (see \cite{jerisonandkenig}, Theorem 2.7).
\end{proof}

\begin{lem}\label{lowerboundonH}
Let $\varepsilon << r$. Then $H_\varepsilon(r) > c\frac{\omega^-(B(0,r))^2}{r^{n-3}}$ for some constant $c > 0$ independent of $r, \varepsilon > 0$. 
\end{lem}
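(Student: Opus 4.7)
My plan is to first prove the analogous un-mollified lower bound $H(r) \geq c\,\omega^-(B(0,r))^2/r^{n-3}$ for a universal $c>0$, and then perturb to $H_\varepsilon$ using that $v$ is Lipschitz (Proposition \ref{locallylipschitz}). The entire strength of the estimate comes from the preceding blowup analysis; the convolution is handled by a crude comparison.

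The key computation is a change of variables. Recalling that $v_j^{(0)}(x) = r_j^{n-2} v(r_j x)/\omega^-(B(0,r_j))$ and that surface measure scales as $d\sigma(r_j x) = r_j^{n-1}\,d\sigma(x)$, one obtains the identity
\begin{equation*}
\int_{\partial B_1} \bigl(v_j^{(0)}\bigr)^2\, d\sigma \;=\; \frac{r_j^{\,n-3}}{\omega^-(B(0,r_j))^2}\, H(r_j).
\end{equation*}
By Remark \ref{vobservations} (which itself rests on Corollary \ref{blowupsforeveryone}), along any sequence $r_j \downarrow 0$ one can pass to a subsequence and rotate so that $v_j^{(0)} \to x_n$ uniformly on compacta. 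Since the limit integral $\int_{\partial B_1} x_n^2\, d\sigma \equalscolon c_n > 0$ is rotation-invariant, every subsequence gives the same value, so
\begin{equation*}
\lim_{r \downarrow 0} \frac{r^{\,n-3}\, H(r)}{\omega^-(B(0,r))^2} \;=\; c_n,
\end{equation*}
and hence $H(r) \geq (c_n/2)\,\omega^-(B(0,r))^2/r^{n-3}$ for all sufficiently small $r$ (with the bound extending to any fixed compact range of $r$ by continuity of $H$ and positivity of $\omega^-$).

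To pass from $H$ to $H_\varepsilon$, note that Proposition \ref{locallylipschitz} provides a Lipschitz constant $L$ for $v$ on a neighbourhood of $0$, independent of scale, and $v(0)=0$. Thus on $\partial B_r$ one has $|v| \leq Lr$ and $|v_\varepsilon - v| \leq L\varepsilon$, so expanding $v_\varepsilon^2 - v^2 = (v_\varepsilon - v)(v_\varepsilon + v)$ and integrating gives
\begin{equation*}
H_\varepsilon(r) \;\geq\; H(r) - C\,\varepsilon\, r^n.
\end{equation*}
Choosing $\varepsilon$ small enough, specifically $\varepsilon \ll r\,(\omega^-(B(0,r))/r^{n-1})^2$, absorbs the error into half of the main term and yields the lemma with a constant $c$ independent of $r$ and $\varepsilon$. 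This is the precise quantitative content of ``$\varepsilon \ll r$'' in the statement.

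The main obstacle is not the analysis in this lemma itself but the blowup step, which is where the entire prior machinery of the paper is consumed: it is essential that every pseudo-blowup of $v$ at $0$ is the linear function $x_n$ (up to rotation), so that the sphere average has a single positive limiting value. Absent this, a subsequence of blowups could degenerate to zero or converge to a higher-homogeneity polynomial, in which case the universal lower bound on the sphere average would fail. This is exactly why the Reifenberg-flatness or Lipschitz-domain hypothesis enters through Corollary \ref{blowupsforeveryone}.
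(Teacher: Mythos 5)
Your proof is correct, but it takes a genuinely different route from the paper's, and in fact it is considerably \emph{less} elementary than what the paper actually does.

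The paper's proof does not invoke blowups at all: it uses only basic NTA estimates. By the corkscrew condition one finds a point $x_0$ near $\partial B_r$ with $\mathrm{dist}(x_0,\partial\Omega)\gtrsim r$; the Harnack chain condition and the Harnack inequality then give $v_\varepsilon(y)\sim v(A_r(0))$ on a sub-ball $B(x_0,kr)$ (for $\varepsilon\ll r$), and thus $v_\varepsilon$ is comparable to $v(A_r(0))$ on a subset of $\partial B_r$ of surface measure $\gtrsim r^{n-1}$. Combining with the standard NTA comparison $v(A_r(0))\sim \omega^-(B(0,r))/r^{n-2}$ (Jerison--Kenig, Lemma 4.8) gives $H_\varepsilon(r)\gtrsim r^{n-1}\cdot\bigl(\omega^-(B(0,r))/r^{n-2}\bigr)^2 = \omega^-(B(0,r))^2/r^{n-3}$, with a constant depending only on the NTA constants, and valid at every scale uniformly. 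Your approach instead \emph{consumes} Corollary \ref{blowupsforeveryone} (hence vanishing Reifenberg flatness, hence the Kenig--Toro and Badger machinery) to identify $\lim_{r\downarrow 0} r^{n-3}H(r)/\omega^-(B(0,r))^2 = c_n$, and then handles the non-asymptotic range of $r$ by a compactness argument and the mollification error by the Lipschitz bound. That works, and it is a nice conceptual reading of the lemma, but your closing assertion that ``the entire strength of the estimate comes from the preceding blowup analysis'' is not accurate: the paper's proof shows the estimate follows already from the corkscrew and Harnack chain conditions, with no reference to the blowup structure of $\partial\Omega$. Your route trades a short, scale-uniform NTA argument for a softer compactness argument resting on much heavier prior input; it buys an explicit asymptotic constant $c_n/2$ (and a precise reading of ``$\varepsilon\ll r$''), but at the cost of a constant that depends on the chosen compact range of $r$ in the non-asymptotic regime, whereas the paper's constant is global.
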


\begin{proof}
By the corkscrew condition (see Definition \ref{ntadomain} condition (1)) on $\Omega$, there is a point $x_0 \in \partial B_r \cap \Omega$ such that $\mathrm{dist}(x_0, \partial \Omega) > cr$ ($c$ depends only on the NTA properties of $\Omega$). The Harnack chain condition (see Definition \ref{ntadomain} condition (3)) gives $v(x_0)\sim v(A_r(0))$.  The Harnack inequality then implies that, for $\varepsilon << r$ there is a universal $k$ such that for $y\in B(x_0, kr)$ we have $v_\varepsilon(y) \sim v(x_0) \sim v(A_r(0))$. 

Therefore, there is a subset of $\partial B_r$ (with surface measure $\approx k|\partial B_r|$) on which $v_\varepsilon$ is proportional to $v(A_r(0))$.  We then recall that in an NTA domain we have $v(A_r(0)) \sim \frac{\omega^-(B(0,r))}{r^{n-2}}$ (\cite{jerisonandkenig}, Lemma 4.8), which proves the desired result.
\end{proof}

It is useful now to establish bounds on the growth rate of $\omega^{\pm}(B(Q,r))$. As $\Omega$ is vanishing Reifenberg flat, $\omega^{\pm}$ is asymptotically optimally doubling (\cite{kenigtoroduke}, Corollary 4.1). This implies a key estimate:  for any $\delta > 0$ and $Q\in \partial \Omega$ we have \begin{equation}\label{slowerthanpoly}\lim_{r\downarrow 0} \frac{r^{n-1+\delta}}{\omega^-(B(Q,r))} = 0.\end{equation}

\begin{lem}\label{almostmonotonic}
Let $\varepsilon << R$. There exists a function, $C(R, \varepsilon)$, such that 
\begin{eqnarray} 
\forall R/4 < r < R,\; N_\varepsilon(R) + C(R, \varepsilon)(R-r) &\geq& N_\varepsilon(r)\\
C(R,\varepsilon)R &\leq& kR^{\alpha/2}
\end{eqnarray}
where $k > 0$ is a constant independent of $\varepsilon, R$ (as long as $\varepsilon << R$). 
\end{lem}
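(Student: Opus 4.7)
The plan is to extract from formula \eqref{derivativeofN} a pointwise lower bound of the form $N'_\varepsilon(r) \geq -g(r)$ where $g$ is integrable near $0$ and uniformly $O(R^{-1+\alpha/2})$ on $[R/4, R]$, and then integrate. The first term on the right of \eqref{derivativeofN} is nonnegative by Cauchy--Schwarz and can be discarded. For the middle term, apply Green's first identity to get $\int_{\partial B_r} v_\varepsilon (v_\varepsilon)_\nu \, d\sigma = D_\varepsilon(r) + A_\varepsilon(r)$, where $A_\varepsilon(r) \colonequals \int_{B_r} v_\varepsilon \Delta v_\varepsilon \, dx$. This splits the middle term as $2 r D_\varepsilon(r) A_\varepsilon(r) + 2 r A_\varepsilon(r)^2$. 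Drop the nonnegative square, and use $r D_\varepsilon(r) = N_\varepsilon(r) H_\varepsilon(r)$ to reduce the surviving piece (once we divide by $H_\varepsilon^2(r)$) to $2 N_\varepsilon(r) A_\varepsilon(r)/H_\varepsilon(r)$. The resulting inequality reads
\[ N'_\varepsilon(r) \geq -2 N_\varepsilon(r) \frac{|A_\varepsilon(r)|}{H_\varepsilon(r)} - 2\frac{|E_\varepsilon(r)|}{H_\varepsilon(r)}. \]

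Next I would estimate $|A_\varepsilon(r)|$ by the same convolution swap used in Lemma \ref{rellichsidentity}: write $A_\varepsilon(r) = \int (\chi_{B_r} v_\varepsilon)_\varepsilon \, \Delta v$ and invoke \eqref{distributionderivative}. Since $v(0)=0$ and $v$ is Lipschitz by Proposition \ref{locallylipschitz}, we have $|v_\varepsilon| \leq Cr$ on $B_{r+\varepsilon}$; combined with $|h(0)/h(x)-1| \leq C r^\alpha$ on that ball and the doubling of $\omega^-$ on NTA domains, this gives $|A_\varepsilon(r)| \leq C r^{1+\alpha}\omega^-(B(0,r))$, matching the bound supplied by Lemma \ref{rellichsidentity} for $|E_\varepsilon(r)|$. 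Inserting Lemma \ref{lowerboundonH}, $H_\varepsilon(r) \geq c\,\omega^-(B(0,r))^2 r^{3-n}$, both ratios are bounded by $C r^{n-2+\alpha}/\omega^-(B(0,r))$. Now apply \eqref{slowerthanpoly} with $\delta = \alpha/2$: for $R$ small enough, $\omega^-(B(0,r)) \geq r^{n-1+\alpha/2}$ throughout $[R/4, R]$, so both ratios are bounded by $C r^{-1+\alpha/2}$.

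A uniform a priori bound $N_\varepsilon(r) \leq C$ on $[R/4, R]$ follows from $\lim_{r\downarrow 0} N(r,0,v) = 1$ (Remark \ref{basicalmgrenfacts}) together with the convergence $N_\varepsilon(r) \to N(r,0,v)$ for fixed $r$ as $\varepsilon \downarrow 0$ and the continuity of $r \mapsto N(r,0,v)$ away from $0$. Combining everything,
\[ N'_\varepsilon(r) \geq -k\, r^{-1+\alpha/2} \geq -k\, R^{-1+\alpha/2} \quad \text{for } r \in [R/4, R], \]
since the exponent $-1+\alpha/2$ is negative. Integrating from $r$ to $R$ gives $N_\varepsilon(R) - N_\varepsilon(r) \geq -k R^{-1+\alpha/2}(R-r)$, which is the lemma with $C(R,\varepsilon) = k R^{-1+\alpha/2}$ and $C(R,\varepsilon) R = k R^{\alpha/2}$.

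The main obstacle is the bookkeeping that produces the exponent $\alpha/2$: extracting it requires pairing Lemma \ref{lowerboundonH} (which puts $\omega^-(B(0,r))^2$ in the denominator and so cancels the single factor of $\omega^-(B(0,r))$ in $|A_\varepsilon|$ and $|E_\varepsilon|$) with the degeneracy-free lower bound on $\omega^-(B(0,r))$ from \eqref{slowerthanpoly}. Since no Ahlfors regularity is available, this sub-polynomial decay, which is a consequence of the vanishing Reifenberg flatness established in Corollary \ref{blowupsforeveryone}, is the only tool that closes the loop; any $\delta < \alpha$ in \eqref{slowerthanpoly} would suffice, and the choice $\delta = \alpha/2$ is the most economical one giving the stated $\alpha/2$.
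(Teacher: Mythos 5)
Your proof is correct and reaches the required bound, but by a genuinely different algebraic route from the paper's. After discarding the nonnegative Cauchy--Schwarz term (as both arguments do), the paper keeps the middle term of \eqref{derivativeofN} intact, bounds the boundary integral $\int_{\partial B_r} v_\varepsilon (v_\varepsilon)_\nu\,d\sigma$ crudely by $Cr^{n}$ via the Lipschitz estimate, and divides by $H_\varepsilon^2$; this leaves $\omega^-(B_r)^{-3}$ in the denominator, and the paper closes by invoking \eqref{slowerthanpoly} with exponent $\alpha/6$ for that term (and $\alpha/2$ for the $E_\varepsilon$ term). You instead feed Green's identity from Remark \ref{basicalmgrenfacts} back in, discard the square $2rA_\varepsilon^2 \geq 0$, and use $rD_\varepsilon = N_\varepsilon H_\varepsilon$ to cancel one factor of $H_\varepsilon$. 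The payoff is symmetry: both surviving error terms become $|A_\varepsilon|/H_\varepsilon$ and $|E_\varepsilon|/H_\varepsilon$, each with a single power of $\omega^-(B_r)$ in the denominator, and \eqref{slowerthanpoly} is then applied once with $\delta = \alpha/2$. The cost is the multiplicative factor $N_\varepsilon(r)$, which you must bound a priori. That bound is true but is asserted rather than derived: pointwise convergence $N_\varepsilon(r) \to N(r)$ for each fixed $r$ together with $N(r) \to 1$ does not by itself give a bound uniform in $r \in [R/4, R]$. What fills the gap is that $v_\varepsilon \to v$ uniformly on $\overline{B}_R$ and $\nabla v_\varepsilon \to \nabla v$ in $L^2(B_R)$, so $H_\varepsilon$, $D_\varepsilon$ converge to $H$, $D$ uniformly on $[R/4,R]$, and since $H$ is continuous and strictly positive there, $N_\varepsilon \to N$ uniformly; with $N(r) \to 1$ one then gets, say, $N_\varepsilon \leq 3$ on $[R/4,R]$ for $R$ small and $\varepsilon \ll R$. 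This is routine but is an extra ingredient the paper's route avoids. A minor quibble: to obtain the stated $R^{\alpha/2}$ after integrating you need $\delta \leq \alpha/2$ in \eqref{slowerthanpoly}, not merely $\delta < \alpha$; your choice $\delta = \alpha/2$ is precisely the endpoint.
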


\begin{proof}
If $C(R, \varepsilon) \colonequals \sup_{R/4 < r < R} (N_\varepsilon(r)')^-$, the first claim of our lemma is true by definition.

Recall \eqref{derivativeofN}: $$H_\varepsilon^2(r)N'_{\varepsilon}(r) = 2r\left(\int_{\partial B_r}(v_\varepsilon)_\nu^2d\sigma \int_{\partial B_r} v_\varepsilon^2d\sigma -\left[\int_{\partial B_r} v_\varepsilon (v_\varepsilon)_\nu d\sigma\right]^2\right) + 2r\int_{B_r} v_\varepsilon \Delta v_\varepsilon dx \int_{\partial B_r} v_\varepsilon (v_\varepsilon)_\nu d\sigma$$$$ -2H_\varepsilon(r) E_\varepsilon(r).$$

As mentioned above, the difference in parenthesis is positive by the Cauchy-Schwartz inequality. Therefore $$(N'_{\varepsilon}(r))^- \leq 2\left|\frac{E_\varepsilon(r)}{H_{\varepsilon}(r)}\right| + \left|\frac{2r\int_{B_r} v_\varepsilon \Delta v_\varepsilon dx \int_{\partial B_r} v_{\varepsilon} (v_{\varepsilon})_\nu d \sigma }{H_{\varepsilon}(r)^2}\right|.$$ 

{\bf (A) Estimating $2r\int_{B_r} v_\varepsilon \Delta v_\varepsilon dx \int_{\partial B_r} v_{\varepsilon} (v_{\varepsilon})_\nu d \sigma$}:  On $\partial B_r, |(v_\varepsilon)_\nu| < C, |v_\varepsilon| < Cr$ by Lipschitz continuity. Therefore, arguing as in Lemma \ref{rellichsidentity}, we can estimate $$\left|2r\int_{B_r} v_\varepsilon \Delta v_\varepsilon dx \int_{\partial B_r} v_{\varepsilon} (v_{\varepsilon})_\nu d\sigma\right| \leq Cr^{n+1} \int_{B_{r+\varepsilon}\cap \partial \Omega} |v_\varepsilon|_\varepsilon \left(\frac{h(0)}{h(x)} - 1\right)d\omega^-$$$$\leq Cr^{n+\alpha+2}\omega^-(B(0,r)),$$ where the last inequality follows from $|v_\varepsilon|_\varepsilon \leq C\varepsilon < Cr$ on $\partial \Omega$ (by Lipschitz continuity). 

From Lemma \ref{lowerboundonH} it follows that $$\left|\frac{2r\int_{B_r} v_\varepsilon \Delta v_\varepsilon dx \int_{\partial B_r} v_{\varepsilon} (v_{\varepsilon})_\nu d \sigma }{H_{\varepsilon}(r)^2}\right| \leq \frac{Cr^{n+\alpha+2}r^{2n-6}}{\omega^-(B(0,r))^3} = C \left(\frac{r^{n-1 + \alpha/6}}{\omega^-(B(0,r))}\right)^3 r^{\alpha/2-1}.$$

{\bf (B) Estimating $2\left|\frac{E_\varepsilon(r)}{H_{\varepsilon}(r)}\right|$}: Lemma \ref{rellichsidentity} and Lemma \ref{lowerboundonH} imply $$2\left|\frac{E_\varepsilon(r)}{H_{\varepsilon}(r)}\right| \leq C\frac{r^{n-2+\alpha}}{\omega^-(B(0,r))} = C\left(\frac{r^{n-1 + \alpha/2}}{\omega^-(B(0,r))}\right)r^{\alpha/2-1}.$$

From \eqref{slowerthanpoly} we can conclude $$\frac{R^{n-1+\alpha/2}}{\omega^-(B(0,R))}, \frac{R^{3n-3+\alpha/2}}{\omega^-(B(0,R))^3}\stackrel{R\downarrow 0}{\rightarrow} 0.$$ Combine the estimates in (A) and (B)  to conclude that $C(\varepsilon, R)R \leq o_R(1)R^{\alpha/2}$. \end{proof}

 We can now prove a lower bound on the size of $N_\varepsilon(r)$ for small $r$. 

\begin{cor}\label{growthrateforW}
$\limsup_{\varepsilon \downarrow 0} \frac{1}{r}(N_\varepsilon(r) -1)  > -Cr^{\alpha/2-1}$. 
\end{cor}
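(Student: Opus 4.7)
The plan is to iterate the almost-monotonicity estimate of Lemma \ref{almostmonotonic} along the dyadic sequence $r_j \colonequals 2^{-j} r$, propagating the boundary value $\lim_{s \downarrow 0} N(s, 0, v) = 1$ from Remark \ref{basicalmgrenfacts} up to the scale $r$ while accumulating only a convergent sum of errors. Since $r_{j+1} = r_j/2 \in (r_j/4, r_j)$, Lemma \ref{almostmonotonic} applied at scale $R = r_j$ with $r = r_{j+1}$ gives, whenever $\varepsilon$ is much smaller than $r_j$,
\[
N_\varepsilon(r_j) \;\geq\; N_\varepsilon(r_{j+1}) \;-\; C(r_j,\varepsilon)\,(r_j - r_{j+1}) \;\geq\; N_\varepsilon(r_{j+1}) \;-\; \tfrac{k}{2}\, r_j^{\alpha/2},
\]
where $k$ is the constant from Lemma \ref{almostmonotonic}. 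Telescoping from $j = 0$ to $j = N-1$ and summing the resulting geometric series yields
\[
N_\varepsilon(r) \;\geq\; N_\varepsilon(r_N) \;-\; \widetilde{C}\, r^{\alpha/2}, \qquad \widetilde{C} \colonequals \tfrac{k}{2}\,(1 - 2^{-\alpha/2})^{-1},
\]
valid for every $\varepsilon \ll r_N$, with $\widetilde{C}$ \emph{independent of $N$}.

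Next I will exchange limits in the correct order. Fix $\eta > 0$ and choose $N$ large enough that $N(r_N, 0, v) > 1 - \eta$, which is possible by Remark \ref{basicalmgrenfacts}. Because $v$ is Lipschitz (Proposition \ref{locallylipschitz}), the mollifications $v_\varepsilon$ converge to $v$ uniformly on compacta and in $H^1$ of a neighbourhood of $\overline{B_{r_N}}$, so $D_\varepsilon(r_N) \to D(r_N, 0, v)$ and $H_\varepsilon(r_N) \to H(r_N, 0, v)$ as $\varepsilon \downarrow 0$. Combining this with the uniform lower bound $H(r_N, 0, v) > 0$ coming from Lemma \ref{lowerboundonH} gives $N_\varepsilon(r_N) \to N(r_N, 0, v) > 1 - \eta$. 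Substituting into the iterated bound and taking $\limsup_{\varepsilon \downarrow 0}$,
\[
\limsup_{\varepsilon \downarrow 0} N_\varepsilon(r) \;\geq\; 1 - \eta - \widetilde{C}\, r^{\alpha/2};
\]
sending $\eta \downarrow 0$, subtracting $1$ from both sides, and dividing by $r$ delivers the corollary with $C = \widetilde{C}$.

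The main obstacle I anticipate is the careful interleaving of the two small parameters $\varepsilon$ and $r_N$: Lemma \ref{almostmonotonic} requires $\varepsilon \ll r_j$ at every one of the $N$ scales used in the iteration, so the order of limits must be \emph{first} fix $N$, \emph{then} send $\varepsilon \downarrow 0$, and only after that send $N \to \infty$. The crucial feature that makes this legitimate is that the accumulated error $\widetilde{C} r^{\alpha/2}$ is independent of $N$, precisely because the geometric series $\sum_{j \geq 0} 2^{-j\alpha/2}$ converges — this in turn relies on the estimate \eqref{slowerthanpoly} that powered the second bound in Lemma \ref{almostmonotonic}. Everything else is either a direct invocation of results already in the paper (the limit $N(r,0,v) \to 1$, the lower bound on $H$, the Lipschitz continuity of $v$) or a standard telescoping argument.
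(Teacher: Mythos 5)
Your proof is correct and takes essentially the same route as the paper: a dyadic telescoping of Lemma \ref{almostmonotonic} that propagates the boundary value $\lim_{s\downarrow 0}N(s,0,v)=1$ up to scale $r$, with the geometric decay $r_j^{\alpha/2}$ supplying a convergent, $N$-independent error. The only cosmetic difference is how the two small parameters are interlaced: you fix $\eta$ and $N$ first, take $\limsup_{\varepsilon\downarrow 0}$ using $N_\varepsilon(\rho)\to N(\rho)$, and then let $\eta\downarrow 0$; the paper instead chooses a scale $r'$ with $|N(r')-1|<Cr^{\alpha/2}$ and then picks $\varepsilon$ small enough that both the lemma and the convergence $|N_\varepsilon(r')-N(r')|<Cr^{\alpha/2}$ hold, obtaining the bound for every sufficiently small $\varepsilon$ directly. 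These are the same argument with the quantifiers organized slightly differently.
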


\begin{proof}
As $\lim_{s\downarrow 0} N(s) = 1$ there is some $r' << r$ such that $|N(r') - 1| < Cr^{\alpha/2}$. Now pick $\varepsilon <<r'$ small enough that Lemma \ref{almostmonotonic} applies for $\varepsilon$ and all $r' < R < r$ and such that $|N_\varepsilon(r') - N(r')| < Cr^{\alpha/2}$(recall $N_\varepsilon(\rho) \rightarrow N(\rho)$ for fixed $\rho$ as $\varepsilon \downarrow 0$). 

Let $j$ be such that $2^{-j}r < r' < 2^{-j+1}r$. Then $$N_\varepsilon(r) - N_\varepsilon(r') \geq \sum_{\ell=0}^{j-2} (N_\varepsilon(2^{-\ell}r)-N_\varepsilon(2^{-\ell-1})r) + N_\varepsilon(2^{-j+1}r) - N_\varepsilon(r') \geq $$$$-C(2^{-j+1}r, \varepsilon)(2^{-j+1}r-r')-\frac{1}{2}\sum_{\ell=0}^{j-2} C(2^{-\ell}r, \varepsilon)2^{-\ell}r \stackrel{\mathrm{Lem} \ref{almostmonotonic}}{\geq}-kr^{\alpha/2} \sum_{\ell=0}^{j-1}(2^{-\ell})^{\alpha/2} \geq -C_\alpha r^{\alpha/2}.$$

Combining all the inequalities above we have that $N_\varepsilon(r) - 1 > -C r^{\alpha/2}$ for small $\varepsilon > 0$.
\end{proof}

\subsection{Monneau Monotonicity and Non-degeneracy}Our main tool here will be the Monneau potential, defined for $f\in H^1_{\mathrm{loc}}(\R^n)$ and $p\in C^{\infty}(\R^n)$, \begin{equation}M^{x_0}(r, f, p)\colonequals \frac{1}{r^{n+1}}\int_{\partial B_r} (f(x+ x_0)-p)^2d\sigma(x).\end{equation} Monneau, \cite{monneau}, observed that if $f$ is a harmonic function vanishing to first order at $x_0$ and $p$ is a 1-homogenous polynomial then $M^{x_0}$ is monotonically decreasing as $r\downarrow 0$.

We follow closely the methods of Garofalo and Petrosyan (\cite{gandp}, see specifically Sections 1.4-1.5) who studied issues of  non-degeneracy in an obstacle problem. Their program, which we adapt to our circumstances, has two steps: first relate the growth of the Monneau potential to the growth of Almgren's frequency function. Second, use this relation to establish lower bounds on the growth of $M$ and the existence of a limit at zero for $M$. As before, $v \equiv v^{(Q)}$ and without loss of generality, $Q= 0 \in \partial \Omega$.  Additionally, $p$ will always be a 1-homogenous polynomial. We drop the dependence of $M$ on $Q$ and $v$ when no confusion is possible. Again $v_\varepsilon = v*\varphi_\varepsilon$, where $\varphi$ is an approximation to the identity. Naturally, $M_\varepsilon(r,p) \colonequals M^{0}(r, v_{\varepsilon}, p)$. 

First we derive equations \eqref{derivativeofM} and \eqref{rewritingW}. 

\begin{equation}\label{derivativeofM}
M'_\varepsilon(r,p) = \frac{2}{r^{n+2}} \int_{\partial B_r} (v_\varepsilon - p)(x\cdot \nabla (v_\varepsilon - p) - (v_\varepsilon -p))d\sigma.
\end{equation}

\begin{proof}[Derivation of \eqref{derivativeofM}]
Let $x = ry$ so that $M_\varepsilon(r,p) = \int_{\partial B_1} \left(\frac{v_\varepsilon(ry)}{r} - \frac{p(ry)}{r}\right)^2 d\sigma(y)$. Differentiating under the integral gives $$M'_\varepsilon(r,p) = \int_{\partial B_1} 2\left(\frac{v_\varepsilon(ry)}{r} - \frac{p(ry)}{r}\right)\left(\frac{y}{r}\cdot \nabla_{x} [v_\varepsilon(ry) - p(ry)] - \frac{1}{r^2}(v_\varepsilon(ry) - p(ry))\right)d\sigma(y).$$

Changing back to $x$ we have that $$M'_\varepsilon(r,p) = \frac{2}{r^{n+2}} \int_{\partial B_r}(v_\varepsilon - p)(x\cdot \nabla (v_\varepsilon - p) - (v_\varepsilon -p))d\sigma(x).$$
\end{proof}

Next we establish a relation between the derivative of $M$ and the growth rate of $N$ (we emphasize that \eqref{rewritingW} is true only when $p$ is a 1-homogenous polynomial). 

\begin{equation}\label{rewritingW}
\frac{H_\varepsilon(r)}{r^{n+1}}\left(N_\varepsilon(r) - 1\right) = -\frac{1}{r^n} \int_{B_r}(v_\varepsilon - p)\Delta v_\varepsilon dx + rM'_\varepsilon(r,p)/2
\end{equation}

\begin{proof}[Derivation of \eqref{rewritingW}]
Recall for all 1-homogenous polynomials $p$ we have $N(r,x_0, p) \equiv 1$. We ``add zero" and distribute to rewrite $$\frac{H_\varepsilon(r)}{r^{n+1}}\left(N_\varepsilon(r) - 1\right) =  \frac{1}{r^n} \int_{B_r} |\nabla(v_\varepsilon - p)|^2 + 2\nabla v_\varepsilon \cdot \nabla p dx - \frac{1}{r^{n+1}} \int_{\partial B_r} (v_\varepsilon - p)^2 + 2v_\varepsilon p d\sigma.$$ 

Transform the first integral on the right hand side using integration by parts,  $$\frac{H_\varepsilon(r)}{r^{n+1}}\left(N_\varepsilon(r) - 1\right)= \frac{1}{r^n} \int_{\partial B_r} \frac{x}{r}\cdot \nabla (v_\varepsilon - p)(v_\varepsilon - p) + 2\left(\frac{x}{r}\cdot \nabla p\right)v_\varepsilon $$$$- \frac{1}{r^n}\int_{B_r} (v_\varepsilon - p) \Delta(v_\varepsilon -p) + 2v_\varepsilon \Delta p dx - \frac{1}{r^{n+1}} \int_{\partial B_r}(v_\varepsilon - p)^2 + 2v_\varepsilon p d\sigma.$$

As $p$ is a 1-homogenous polynomial, $\Delta p = 0$ and $x\cdot \nabla p - p = 0$. The above simplifies to $$\frac{H_\varepsilon(r)}{r^{n+1}}\left(N_\varepsilon(r) - 1\right) = -\frac{1}{r^n} \int_{B_r}(v_\varepsilon - p)\Delta v_{\varepsilon} + \frac{1}{r^{n+1}} \int_{\partial B_r}( x \cdot \nabla( v_\varepsilon - p)-(v_\varepsilon - p))(v_\varepsilon - p)d\sigma.$$ In light of \eqref{derivativeofM}, we are finished.  
\end{proof}

The above two equations, along with Corollary \ref{growthrateforW}, allow us to control the growth of $M$ from below. 

\begin{lem}\label{growthofM}
Let $p$ be any 1-homogenous polynomial. Then for any $R> 0$ there exists a constant $C$ (independent of $R$ and $p$) such that $$M(R, p) - M(r, p) \geq -(C+C\|p\|_{L^\infty(\partial B_1)})R^{\alpha/2}$$ for any $r\in [R/4, R]$. 
\end{lem}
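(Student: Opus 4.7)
The approach is to integrate \eqref{rewritingW} rewritten as
\[
M'_\varepsilon(r,p) \;=\; \frac{2H_\varepsilon(r)}{r^{n+2}}\bigl(N_\varepsilon(r) - 1\bigr) \;+\; \frac{2}{r^{n+1}}\int_{B_r}(v_\varepsilon - p)\,\Delta v_\varepsilon\, dx,
\]
bound each of the two terms from below by a function that is integrable near $0$, and then send $\varepsilon \downarrow 0$ and use $M_\varepsilon(s,p) \to M(s,p)$ for each fixed $s$.

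For the first term, the Lipschitz bound on $v$ (Proposition \ref{locallylipschitz}) together with $v(0) = 0$ gives $H_\varepsilon(r) \leq Cr^{n+1}$, while Corollary \ref{growthrateforW}, combined with $N_\varepsilon(r) \to N(r)$ as $\varepsilon \downarrow 0$ for fixed $r$, yields $N_\varepsilon(r) - 1 \geq -Cr^{\alpha/2}$ once $\varepsilon$ is small enough. These two estimates produce a lower bound of $-Cr^{\alpha/2 - 1}$. For the second term, I shift the convolution as in Lemma \ref{rellichsidentity} to write
\[
\int_{B_r}(v_\varepsilon - p)\Delta v_\varepsilon\, dx \;=\; \int \bigl[(\chi_{B_r}(v_\varepsilon - p)) * \varphi_\varepsilon\bigr]\,\left(\frac{h(0)}{h(x)} - 1\right)\, d\omega^-,
\]
and then invoke three pointwise estimates on the support of the integrand: $|h(0)/h(x) - 1| \leq Cr^\alpha$ (from $\log h \in C^{0,\alpha}$), $|v_\varepsilon(x)| \leq Cr$ (from Lipschitz continuity and $v(0) = 0$), and $|p(x)| \leq \|p\|_{L^\infty(\partial B_1)}\,r$ (from $1$-homogeneity). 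Together with the doubling of $\omega^-$ and the upper bound $\omega^-(B(0,r)) \leq Cr^{n-1}$ from Corollary \ref{upperboundondensity}, this controls the second term in absolute value by $C(1 + \|p\|_{L^\infty(\partial B_1)})\, r^{\alpha - 1}$.

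Since $r^{\alpha - 1} \leq r^{\alpha/2 - 1}$ for small $r$, combining the two estimates gives $M'_\varepsilon(r,p) \geq -C(1 + \|p\|_{L^\infty(\partial B_1)})\, r^{\alpha/2 - 1}$, which is integrable on $(0, R]$. Integrating from $r$ to $R$ yields
\[
M_\varepsilon(R, p) - M_\varepsilon(r, p) \;\geq\; -C(1 + \|p\|_{L^\infty(\partial B_1)})\,\bigl(R^{\alpha/2} - r^{\alpha/2}\bigr) \;\geq\; -(C + C\|p\|_{L^\infty(\partial B_1)})\,R^{\alpha/2},
\]
and passing to the limit $\varepsilon \downarrow 0$ completes the argument.

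The main technical delicacy is ensuring that the bound $N_\varepsilon(s) - 1 \geq -Cs^{\alpha/2}$ from Corollary \ref{growthrateforW} holds \emph{simultaneously} for all $s \in [r, R]$ with a single sufficiently small $\varepsilon$. This is resolved by taking $\varepsilon \ll r$: the construction in the proof of Corollary \ref{growthrateforW} (which requires $\varepsilon$ small relative to the point being evaluated) then applies at every $s \geq r$ at once. This is pure bookkeeping rather than a new idea, so the whole argument is quite close in spirit to Lemma 1.5.3 of \cite{gandp}, with the one-sided quantitative monotonicity of $N$ replacing Almgren's exact monotonicity.
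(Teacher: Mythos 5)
Your proof is correct and follows essentially the same route as the paper: both start from \eqref{rewritingW}, bound the integral term $\frac{1}{r^n}\int_{B_r}(v_\varepsilon - p)\Delta v_\varepsilon$ by $C(1+\|p\|_{L^\infty(\partial B_1)})r^\alpha$ using the Lipschitz bound on $v$, the $1$-homogeneity of $p$, the H\"older continuity of $\log h$, and Corollary \ref{upperboundondensity}, and then invoke Corollary \ref{growthrateforW} to control the $N_\varepsilon - 1$ term before integrating $(M'_\varepsilon)^-$ over $[r,R]$. The only cosmetic difference is that you make explicit the bound $H_\varepsilon(r) \leq Cr^{n+1}$ needed to suppress the $H_\varepsilon/r^{n+1}$ factor in front of $(N_\varepsilon - 1)$, which the paper leaves tacit, and you integrate the pointwise bound $r^{\alpha/2-1}$ rather than the sup bound $R^{\alpha/2-1}$; for $r\in[R/4,R]$ these are equivalent up to constants.
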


\begin{proof}
Recall \eqref{rewritingW}, $$rM'_\varepsilon(r, p)/2 = \frac{H_\varepsilon(r)}{r^{n+1}}\left(N_\varepsilon(r) - 1\right) + \frac{1}{r^n} \int_{B_r}(v_\varepsilon - p)\Delta v_{\varepsilon}.$$ Consider first the integral on the right hand side and argue as before to estimate, $$\left|\frac{1}{r^n} \int_{B_r}(v_\varepsilon - p)\Delta v_{\varepsilon}\right| \leq \frac{1}{r^n} \int_{\partial \Omega \cap B_{r+\varepsilon}}|v_\varepsilon - p|_\varepsilon \left(\frac{h(x)}{h(0)} - 1\right)d\omega^- \leq C(1+\|p\|_{L^\infty(\partial B_1)})\frac{\omega^-(B(0,r))r^{1+\alpha}}{r^{n}},$$ where $|v_\varepsilon| < Cr$ on $\partial \Omega$ because $v$ is Lipschitz and $|p(x)| \leq C\|p\|_{L^\infty(\partial B_1)}r$ because $p$ is 1-homogenous.  By Corollary \ref{upperboundondensity}, $\frac{\omega^-(B(Q,r))}{r^{n-1}}$ is bounded uniformly in $r < 1$ and in $Q\in \partial \Omega$ on compacta. Therefore, $|\frac{1}{r^n} \int_{B_r}(v_\varepsilon - p)\Delta v_{\varepsilon}| \leq C(1+\|p\|_{L^\infty(\partial B_1)}) r^{\alpha}$. 

Returning to \eqref{rewritingW}, $$\limsup_{\varepsilon \downarrow 0} \sup_{R/4 < r < R} (M_\varepsilon(r,p)')^- \leq C(1+\|p\|_{L^\infty(\partial B_1)}) R^{\alpha-1} + \limsup_{\varepsilon \downarrow 0}\sup_{R/4 < r < R} \frac{1}{r}(N_\varepsilon(r) - 1).$$  The bounds on the growth of $N$ (Corollary \ref{growthrateforW}) imply $$\limsup_{\varepsilon \downarrow 0} \sup_{R/4 < r < R} (M_\varepsilon(r,p)')^- \leq (C+C\|p\|_{L^\infty(\partial B_1)})R^{\alpha/2-1},$$ which is equivalent to the desired result.  
\end{proof}

When it is not relevant to the analysis (e.g. in the proofs of Lemma \ref{limitofM} and Proposition \ref{nondegeneracy} below), we omit the dependence of the constant in Lemma \ref{growthofM} on $\|p\|_{L^\infty(\partial B_1)}$.  

\begin{lem}\label{limitofM}
Let $p$ be any 1-homogenous polynomial. Then $M(0,p) \colonequals \lim_{r\downarrow 0}M(r, p)$ exists. 
\end{lem}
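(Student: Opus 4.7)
The plan is to exploit the almost-monotonicity of $r \mapsto M(r, p)$ furnished by Lemma \ref{growthofM}, via a dyadic iteration in the spirit of the proof of Corollary \ref{growthrateforW}. First I would fix $R_0 > 0$ and set $r_j := 2^{-j} R_0$. Applying Lemma \ref{growthofM} with $R = r_j$ and $r = r_{j+1} = r_j/2 \in [R/4, R]$ yields
$$M(r_{j+1}, p) \leq M(r_j, p) + C r_j^{\alpha/2},$$
with $C$ depending on $p$ but independent of $j$. I would then introduce the auxiliary sequence $s_j := M(r_j, p) + C \sum_{\ell \geq j} r_\ell^{\alpha/2}$, whose tail is finite because $\sum_\ell 2^{-\ell\alpha/2} < \infty$. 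A one-line manipulation gives $s_{j+1} \leq s_j$; since $M \geq 0$ the sequence $\{s_j\}$ is non-increasing and bounded below, hence $s_j \to L$ for some $L \geq 0$, and because $\sum_{\ell \geq j} r_\ell^{\alpha/2} \to 0$ we obtain $M(r_j, p) \to L$ along the dyadic subsequence.

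Next I would upgrade this to the full one-sided limit. Given arbitrary $r \in (0, R_0]$, I choose $j$ with $r \in [r_{j+1}, r_j]$. Applying Lemma \ref{growthofM} once with $R = r_j$ (and inner radius $r \in [R/4, R]$) gives $M(r, p) \leq M(r_j, p) + C r_j^{\alpha/2}$, and a second time with $R = r$ (and inner radius $r_{j+1} \in [r/2, r] \subset [R/4, R]$) gives $M(r, p) \geq M(r_{j+1}, p) - C r^{\alpha/2} \geq M(r_{j+1}, p) - C r_j^{\alpha/2}$. Sending $j \to \infty$ traps $M(r, p)$ between two sequences both tending to $L$, so $\lim_{r \downarrow 0} M(r, p) = L$, as required.

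The substantive work, controlling $(M'_\varepsilon)^-$, has already been carried out in Lemma \ref{growthofM}; what remains is a dyadic geometric-series summation together with a standard sandwich argument, and I do not foresee a genuine obstacle. The only point worth emphasizing is that the H\"older exponent $\alpha/2 > 0$ appearing in Lemma \ref{growthofM} is precisely what renders the dyadic tails summable and thereby allows one to absorb the failure of monotonicity into a bona fide monotone auxiliary quantity $s_j$.
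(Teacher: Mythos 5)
Your proof is correct and follows essentially the same route as the paper: iterate Lemma \ref{growthofM} dyadically, use that $r^{\alpha/2-1}$ is integrable at $0$ so the geometric series converges, and then sandwich the general $r$ between consecutive dyadic radii. The paper packages this as ``$\limsup_{r\downarrow 0} M(r,p) =: a$ satisfies $M(r,p) - a \geq -C r^{\alpha/2}$,'' while you instead introduce the explicitly monotone correction $s_j = M(r_j,p) + C\sum_{\ell\geq j} r_\ell^{\alpha/2}$; these are two standard bookkeepings of the same dyadic-iteration argument.
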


\begin{proof}
Let $a \colonequals \limsup_{r\downarrow 0} M(r,p)$. That $a < \infty$ follows from Lemma \ref{growthofM}, applied iteratively (as $r^{\alpha/2-1}$ is integrable at zero). We claim that there exists a constant $C < \infty$ such that $M(r,p) - a > -Cr^{\alpha/2}$ for any $0 < r \leq 1$.  

On the other hand, $a- M(r,p) > -o(1)$ as $r \downarrow 0$ by the definition of $\limsup$. This, with the claim above, implies that $\lim_{r\downarrow 0} M(r, p) = a$.

Let us now address the claim: take $r_0 < r$. Let $k$ be such that $2^{-k}r \geq r_0 \geq 2^{-k-1}r$. Then, by Lemma \ref{growthofM}, we have $$M(r, p) - M(r_0, p) = \sum_{\ell = 0}^{k-1} (M(2^{-\ell}r, p) - M(2^{-\ell - 1}r, p)) + M(2^{-k}r, p) - M(r_0, p)$$$$ \geq -Cr^{\alpha/2} \sum_{\ell = 0}^{\infty} (2^{\alpha/2})^{-\ell} \geq -C_\alpha r^{\alpha/2}.$$ The claim follows if we pick $r_0$ small so that $M(r_0,p)$ is arbitrarily close to $a$.
\end{proof}

Finally, we can establish the pointwise non-degeneracy of $\Theta^{n-1}(\omega^\pm, Q)$. 

\begin{prop}\label{nondegeneracy}
For all $Q \in \partial \Omega$ we have $\Theta^{n-1}(\omega^{\pm}, Q) > 0$. 
\end{prop}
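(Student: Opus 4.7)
The plan is to derive a contradiction by combining Monneau's quantitative monotonicity (Lemma \ref{growthofM}) with the sharp lower bound on harmonic measure provided by equation \eqref{slowerthanpoly}. Fix $Q \in \partial \Omega$ and, without loss of generality, take $Q = 0$; suppose for contradiction that $\Theta_r := \omega^-(B(0,r))/r^{n-1} \to 0$ as $r \downarrow 0$. The starting point is the rescaling identity
\begin{equation*}
M(r, p) = \int_{\partial B_1} \bigl(\Theta_r v_r(y) - p(y)\bigr)^2 \, d\sigma(y)
\end{equation*}
for every $1$-homogeneous polynomial $p$, where $v_r(y) = r^{n-2}v^{(0)}(ry)/\omega^-(B(0,r))$ is the blowup at scale $r$. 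By Remark \ref{vobservations}, along a subsequence $r_j \downarrow 0$ one has (after rotation) $v_{r_j} \to L$ uniformly on compacta with $L(y) = y \cdot e_n$.

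First I would compute $M(0, L)$ using the identity above. Because $\Theta_{r_j} \to 0$ and the $v_{r_j}$ are uniformly bounded in $L^\infty$, the product $\Theta_{r_j} v_{r_j}$ tends to $0$ uniformly on $\partial B_1$, so $M(r_j, L) \to \|L\|^2_{L^2(\partial B_1)} =: C_n > 0$. Lemma \ref{limitofM} promotes this to the full limit $M(0, L) = C_n$. Expanding the identity gives
\begin{equation*}
C_n - M(r, L) = 2\Theta_r \langle v_r, L\rangle_{L^2(\partial B_1)} - \Theta_r^2 \|v_r\|^2_{L^2(\partial B_1)},
\end{equation*}
and along the chosen subsequence $\langle v_{r_j}, L\rangle \to \|L\|^2 = C_n$, so for $j$ large the linear term in $\Theta_{r_j}$ dominates and
\begin{equation*}
C_n - M(r_j, L) \geq \tfrac{C_n}{2}\Theta_{r_j}.
\end{equation*}

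The second ingredient is the quantitative one-sided estimate produced by a dyadic iteration of Lemma \ref{growthofM} (exactly the argument used inside Lemma \ref{limitofM}): there is a constant $C$ depending only on $\alpha$ and $\|L\|_{L^\infty(\partial B_1)} = 1$ such that $M(r, L) \geq M(0, L) - C r^{\alpha/2}$ for all small $r$. Combined with the previous display this forces $\Theta_{r_j} \leq (2C/C_n)\, r_j^{\alpha/2}$ for $j$ large. On the other hand, equation \eqref{slowerthanpoly} gives $\Theta_r \geq r^\delta$ for every $\delta > 0$ and all sufficiently small $r$. Choosing $\delta \in (0, \alpha/2)$ and passing to $j \to \infty$ yields $r_j^{\delta - \alpha/2} \leq 2C/C_n$, which is absurd since $\delta - \alpha/2 < 0$. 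Hence $\Theta^{n-1}(\omega^-, Q) > 0$; positivity of $\Theta^{n-1}(\omega^+, Q)$ then follows from $\Theta^{n-1}(\omega^-, Q) = h(Q)\, \Theta^{n-1}(\omega^+, Q)$ (Remark \ref{rnderivative}) together with $h(Q) > 0$.

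The main obstacle I anticipate is extracting the clean one-sided rate $M(r, L) \geq M(0, L) - C r^{\alpha/2}$ with an absolute constant; this requires iterating Lemma \ref{growthofM} over dyadic scales and summing the geometric series $\sum_k 4^{-k\alpha/2} < \infty$, and one must check that the implicit $\varepsilon$-smoothing built into Lemma \ref{almostmonotonic} causes no trouble when taking $\varepsilon \downarrow 0$. A secondary delicate point is ensuring that the linear-in-$\Theta_{r_j}$ term genuinely dominates the quadratic one in the expansion of $C_n - M(r_j, L)$, which uses the uniform $L^\infty$ bound $\|v_{r_j}\|_{L^\infty(\partial B_1)} \leq C$ coming from Section \ref{sec: u is lipschitz}.
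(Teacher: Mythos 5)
Your proof is correct and follows essentially the same strategy as the paper's: argue by contradiction, use the almost-monotonicity of the Monneau potential (iterating Lemma \ref{growthofM} to get $M(r,p) \geq M(0,p) - Cr^{\alpha/2}$), and invoke equation \eqref{slowerthanpoly} to close the contradiction. The only cosmetic difference is that you isolate a quantitative rate $\Theta_{r_j} \lesssim r_j^{\alpha/2}$ and then contradict the lower bound $\Theta_{r_j} \gtrsim r_j^\delta$, whereas the paper divides the same inequality by $\Theta_{r_j}$ and passes to the limit to obtain $-2\int_{\partial B_1} p^2 \geq 0$; the computation is identical in substance.
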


\begin{proof}
It suffices to assume $Q = 0$ and to prove $\Theta^{n-1}(\omega^-, 0) > 0$. 

We proceed by contradiction. Pick some $r_j \downarrow 0$ so that $v_j \rightarrow p$ uniformly on compacta (where $p$ is a 1-homogenous polynomial given by Corollary \ref{blowupsforeveryone}).  Lemma \ref{limitofM} implies $$M(0, p) = \lim_{j\rightarrow \infty} M(r_j, p) = \lim_{j\rightarrow \infty}\int_{\partial B_1} \left(v_j(x)\frac{\omega^-(B(0,r_j))}{r_j^{n-1}} - \frac{p(r_j x)}{r_j}\right)^2 d\sigma(x).$$ As  $\frac{p(r_jx)}{r_j}= p(x)$ and $\Theta^{n-1}(\omega^-, 0) = 0$, by assumption, we can conclude $M(0,p) = \int_{\partial B_1} p^2d\sigma$.

For any $j$, the homogeneity of $p$ implies $$M(r_j, p) -M(0,p) = \frac{1}{r_j^{n+1}}\int_{\partial B_{r_j}} (v - p)^2 - \int_{\partial B_1} p^2= \int_{\partial B_1}\left(v_j(y)\frac{\omega^-(B(0,r_j))}{r_j^{n-1}} - p\right)^2-p^2d\sigma$$$$=\int_{\partial B_1} \left(v_j(y)\frac{\omega^-(B(0,r_j))}{r_j^{n-1}}\right)^2 - 2v_j(y)\frac{\omega^-(B(0,r_j))}{r_j^{n-1}}p(y)d\sigma \geq -Cr_j^{\alpha/2},$$ where the last inequality follows from iterating Lemma \ref{growthofM} (as in the proof of Lemma \ref{limitofM}). 

Rewrite the above equation as $$\frac{\omega^-(B(0,r_j))}{r_j^{n-1}}\int_{\partial B_1} v_j(y)^2 \frac{\omega^-(B(0,r_j))}{r_j^{n-1}} - 2v_j(y)p(y)d\sigma \geq -Cr_j^{\alpha/2}.$$ Divide by $\omega^-(B(0,r_j))/r_j^{n-1}$ and let $j\rightarrow \infty$. By \eqref{slowerthanpoly} the right hand side vanishes and, by assumption, $\omega^-(B(0,r_j))/r_j^{n-1}\rightarrow 0$. In the limit we obtain $-2\int_{\partial B_1} p^2 \geq 0$, a contradiction. \end{proof}

At this point we have proven that $\infty > \Theta^{n-1}(\omega^-, Q) > 0$ everywhere on $\partial \Omega$ and that $\Theta^{n-1}(\omega^-, Q)$ is bounded uniformly from above on compacta. Using standard tools from geometric measure theory this implies, for all dimensions, the decomposition mentioned in the introduction (for $n=2$): $\partial \Omega = \Gamma \cup N$, where $\omega^{\pm}(N) = 0$ and $\Gamma$ is a $(n-1)$-rectifiable set with $\sigma$-finite $\mathcal H^{n-1}$ measure. 

\section{Uniform non-degeneracy and initial regularity}\label{sec: c1domain}

\subsection{$\Theta^{n-1}(\omega^{\pm}, Q)$ is bounded uniformly away from $0$.}

In order to establish greater regularity for $\partial \Omega$ we need a uniform lower bound. Again the method of Garofalo and Petrosyan (\cite{gandp}, specifically Theorems 1.5.4 and 1.5.5) guides us. Our first step is to show that there is a unique tangent plane at every point. 

\begin{lem}\label{uniqueblowup}
For each $Q \in \partial \Omega$ there exists a unique 1-homogenous polynomial, $p^{Q}$, such that for any $r_j \downarrow 0$ we have $v_j \rightarrow p^{Q}$ uniformly on compacta (i.e. the limit described in Corollary  \ref{blowupsforeveryone} is unique).
\end{lem}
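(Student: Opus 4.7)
The plan is to leverage the existence of $M(0,p) \colonequals \lim_{r\downarrow 0}M(r,p)$ (Lemma \ref{limitofM}) together with the pointwise non-degeneracy $\Theta^{n-1}(\omega^-, Q) > 0$ (Proposition \ref{nondegeneracy}). The Monneau potential, evaluated against the \emph{correct} $1$-homogeneous polynomial, computes (in the limit) the $L^2$-distance on $\partial B_1$ between any two subsequential blowup limits; if that distance must simultaneously be zero and $\Theta^2 \|\hat p - \tilde p\|_{L^2}^2$, uniqueness follows.

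Fix $Q = 0$ and set $\Theta \colonequals \Theta^{n-1}(\omega^-, 0) \in (0,\infty)$, which is well-defined by Section \ref{sec: u is lipschitz} and positive by Proposition \ref{nondegeneracy}. Let $r_j \downarrow 0$ and $s_j \downarrow 0$ be two arbitrary sequences along which (passing to subsequences via Corollary \ref{blowupsforeveryone}) the corresponding blowups satisfy $v_{r_j} \to \tilde p$ and $v_{s_j} \to \hat p$ uniformly on compacta, with $\tilde p, \hat p$ some $1$-homogeneous harmonic polynomials. The goal is to show $\tilde p = \hat p$.

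Next, I would compute $M(0,p)$ for the specific choice $p \colonequals \Theta \tilde p$. The change of variables $x = ry$ inside the Monneau potential gives
\[
M(r,p) = \int_{\partial B_1} \left( \frac{v(ry)}{r} - p(y) \right)^2 d\sigma(y),
\]
and the definition of the blowup yields
\[
\frac{v(r_j y)}{r_j} = \frac{\omega^-(B(0,r_j))}{r_j^{n-1}} \, v_{r_j}(y) \equalscolon \Theta_{r_j} \, v_{r_j}(y).
\]
Since $\Theta_{r_j} \to \Theta$ (by the end of Section \ref{sec: u is lipschitz}) and $v_{r_j} \to \tilde p$ uniformly on $\partial B_1$, we conclude $M(r_j, p) \to 0$, hence $M(0,p) = 0$ by Lemma \ref{limitofM}.

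Finally, the identical computation along the sequence $s_j$ gives
\[
M(s_j, p) = \int_{\partial B_1} \left( \Theta_{s_j} v_{s_j}(y) - \Theta \tilde p(y) \right)^2 d\sigma(y) \longrightarrow \Theta^2 \| \hat p - \tilde p \|_{L^2(\partial B_1)}^2.
\]
Since $\lim_j M(s_j, p) = M(0,p) = 0$ and $\Theta > 0$, we obtain $\hat p = \tilde p$. The common blowup is then named $p^Q$. The main content of this argument is already present in the preceding sections; the only step that might require a line of justification is the passage to the limit inside the $L^2$-integral on $\partial B_1$, but this is immediate from the uniform convergence of $v_{r_j}, v_{s_j}$ on compacta and the convergence $\Theta_{r_j}, \Theta_{s_j} \to \Theta$. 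No serious obstacle is anticipated.
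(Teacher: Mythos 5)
Your proposal is correct and is essentially identical to the paper's own argument: both fix $p$ to be $\Theta^{n-1}(\omega^-,0)$ times one subsequential blowup limit, use Lemma \ref{limitofM} to show $M(0,p)=0$ along that sequence, then evaluate $M(0,p)$ along the other sequence to obtain $\Theta^2\|\hat p - \tilde p\|_{L^2(\partial B_1)}^2$, and conclude from the non-degeneracy of Proposition \ref{nondegeneracy}.
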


\begin{proof}
We prove it for $Q = 0$. Pick $r_j \downarrow 0$ so that $v_{r_j} \rightarrow p$ uniformly on compacta for some 1-homogenous polynomial $p$. Let $\tilde{r}_j \downarrow 0$ be another sequence so that $v_{\tilde{r}_j} \rightarrow \tilde{p}$, where $\tilde{p}$ is also a 1-homogenous polynomial. 

By Lemma \ref{limitofM}, $M(0, \Theta^{n-1}(\omega^-, 0)p)$ exists. Therefore, \begin{equation}\label{computationforlimitat0}\begin{aligned}M(0, \Theta^{n-1}(\omega^-, 0)p) =& \lim_{j\rightarrow \infty} M(r_j, \Theta^{n-1}(\omega^-,0)p)\\ =& \lim_{j\rightarrow \infty} \int_{\partial B_1}\left(\frac{\omega^-(B(0,r_j))}{r_j^{n-1}}v_{r_j}(x) - \Theta^{n-1}(\omega^-,0)p\right)^2d\sigma\\ =& 0.\end{aligned}\end{equation} The last equality above follows by the dominated convergence theorem and that $v_{r_j}\rightarrow p$. 

Similarly, \begin{equation*}\begin{aligned} M(0, \Theta^{n-1}(\omega^-, 0)p) =& \lim_{j\rightarrow \infty} M(\tilde{r}_j, \Theta^{n-1}(\omega^-, 0)p)\\ =& \lim_{j\rightarrow \infty} \int_{\partial B_1}\left(\frac{\omega^-(B(0,\tilde{r}_j))}{\tilde{r}_j^{n-1}}v_{\tilde{r}_j}(x) - \Theta^{n-1}(\omega^-,0)p\right)^2 \\ =& (\Theta^{n-1}(\omega^-, 0))^2 \int_{\partial B_1} (\tilde{p} - p)^2 d\sigma.\end{aligned}\end{equation*} Again the last equality follows by dominated convergence theorem and that $v_{\tilde{r}_j}\rightarrow \tilde{p}$.  As $\Theta^{n-1}(\omega^-, 0) > 0$ (Proposition \ref{nondegeneracy}), we have $p = \tilde{p}$.
\end{proof}

We should note that Lemma \ref{limitofM} (the existence of a limit at 0) and Lemma \ref{growthofM} (estimates on the derivatives of $M$) both hold for $M^{Q}(r, v^{(Q)}, p)$ where $p$ is any 1-homogenous polynomial and (as before) $v^{(Q)}(y)= h(Q)u^+(y) - u^-(y)$. Furthermore the constants in Lemma \ref{growthofM} are uniform for $Q$ in a compact set.
We now prove the main result of this subsection.

\begin{prop}\label{continuityofblowup}
The function $Q \mapsto \tilde{p}^{Q} \colonequals \Theta^{n-1}(\omega^-, Q)p^{Q}$ is a continuous function from $\partial \Omega \rightarrow C(\R^n)$. 
\end{prop}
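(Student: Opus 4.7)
Let $Q_k \to Q$ in $\partial\Omega$. From the discussion preceding Lemma \ref{uniqueblowup}, each $p^{Q_k}$ is the linear polynomial $x\mapsto x\cdot \nu^{Q_k}$ for a unit vector $\nu^{Q_k}$, so every $\tilde p^{Q_k}$ belongs to the finite-dimensional space of linear forms on $\R^n$, and by Corollary \ref{upperboundondensity} the norm $\Theta^{n-1}(\omega^-, Q_k) = \|\tilde p^{Q_k}\|_{L^\infty(\partial B_1)}$ is bounded independently of $k$. A standard compactness argument therefore reduces the proposition to showing that any $C(\R^n)$-subsequential limit $\tilde p^\infty$ of $\{\tilde p^{Q_k}\}$ must coincide with $\tilde p^Q$.

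The plan is to feed the \emph{single fixed} test polynomial $p = \tilde p^Q$ into the Monneau machinery at the moving base points $Q_k$. Applying Lemma \ref{growthofM} at $Q_k$ with $p = \tilde p^Q$ and iterating dyadically exactly as in the proof of Lemma \ref{limitofM} yields, for all $0 < r \le R \le 1$,
\[
M^{Q_k}(r, v^{(Q_k)}, \tilde p^Q) \;\le\; M^{Q_k}(R, v^{(Q_k)}, \tilde p^Q) + C R^{\alpha/2},
\]
where $C$ is independent of $k$: inspection of Lemmas \ref{rellichsidentity}--\ref{almostmonotonic} shows that every ingredient (the NTA and doubling constants of $\Omega$, the H\"older norm of $\log h$, the uniform density bound from Corollary \ref{upperboundondensity}, and the locally uniform Lipschitz bound on $\{v^{(Q_k)}\}$ from Remark \ref{vobservations}) is uniform for base points in a compact set, while the $\|p\|_{L^\infty}$-dependence allowed in Lemma \ref{growthofM} is harmless because $\tilde p^Q$ is a single fixed polynomial.

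The argument is completed by taking three successive limits. Using the 1-homogeneity of $\tilde p^Q$ to rewrite
\[
M^{Q_k}(r, v^{(Q_k)}, \tilde p^Q) = \int_{\partial B_1}\bigl(v^{(Q_k)}(rx + Q_k)/r - \tilde p^Q(x)\bigr)^2 d\sigma(x),
\]
first fix $k$ and let $r \downarrow 0$: by equation \eqref{computationforlimitat0} applied at $Q_k$ we have $v^{(Q_k)}(rx + Q_k)/r \to \tilde p^{Q_k}$ in $L^2(\partial B_1)$, so the left-hand side of the growth estimate converges to $\|\tilde p^{Q_k} - \tilde p^Q\|_{L^2(\partial B_1)}^2$. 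Next fix $R$ and let $k \to \infty$: continuity of $h$ and of $u^\pm$ gives $v^{(Q_k)}(Rx + Q_k)/R \to v^{(Q)}(Rx+Q)/R$ uniformly on $\partial B_1$, hence $M^{Q_k}(R, v^{(Q_k)}, \tilde p^Q) \to M^Q(R, v^{(Q)}, \tilde p^Q)$, while the subsequential hypothesis yields $\tilde p^{Q_k} \to \tilde p^\infty$ in $L^2(\partial B_1)$; combining produces
\[
\|\tilde p^\infty - \tilde p^Q\|_{L^2(\partial B_1)}^2 \;\le\; M^Q(R, v^{(Q)}, \tilde p^Q) + C R^{\alpha/2}.
\]
Finally send $R \downarrow 0$: by \eqref{computationforlimitat0} at $Q$ the right-hand side vanishes, forcing $\tilde p^\infty = \tilde p^Q$.

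The main obstacle is the non-commutativity of the two singular limits $r \downarrow 0$ and $k \to \infty$ inside the Monneau potential. The almost-monotonicity inequality of Lemma \ref{growthofM}, evaluated at the auxiliary intermediate scale $R$, is exactly what decouples them and lets $r$, $k$, and $R$ be sent to their limiting values in the correct order.
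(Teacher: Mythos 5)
Your proof is correct and takes essentially the same route as the paper's: both plug the fixed polynomial $\tilde p^Q$ into the Monneau potential at moving base points near $Q$, use the uniform (in base point) version of Lemma \ref{growthofM} to compare a small fixed scale with the limit at zero, and invoke the blowup identity $M^P(0, v^{(P)}, \tilde p^Q) = \|\tilde p^P - \tilde p^Q\|^2_{L^2(\partial B_1)}$. The only cosmetic difference is that you package the argument via extraction of a subsequential limit and a triple limit $r\downarrow 0$, $k\to\infty$, $R\downarrow 0$, whereas the paper runs a direct $\varepsilon$-$\delta$ argument at a single auxiliary scale $r_\varepsilon$; these are the same argument.
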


\begin{proof}
As $\tilde{p}^{Q}$ is a 1-homogenous polynomial, it suffices to show that $Q \mapsto \tilde{p}^{Q}$ is a continuous function from $\partial \Omega \rightarrow L^2(\partial B_1)$. 

Pick $\varepsilon > 0$ and $Q \in \partial \Omega$. Equation \ref{computationforlimitat0} implies that $M^{Q}(0,v^{(Q)}, \tilde{p}^{Q}) = 0$. In particular, there is a $r_\varepsilon > 0$ such that if $r \leq r_\varepsilon$ then $M^{Q}(r, v^{(Q)}, \tilde{p}^{Q}) < \varepsilon$. Shrink $r_\varepsilon$ so that $r_\varepsilon^{\alpha/2} < \varepsilon$. 

$v^{(Q)}\in W^{1,\infty}_{loc}(\R^n)$ (uniformly for  $Q$ in a compact set) and $h\in C^\alpha(\partial \Omega)$, so there exists a $\delta = \delta(r_\varepsilon, \varepsilon) > 0$ such that for all $P\in B_\delta(Q)$ and $x\in B_1(0)$ we have \begin{equation}\label{vqnearvp} |v^{(Q)}(x+ Q) - v^{(P)}(x+P)| < \varepsilon r_\varepsilon.\end{equation}

Since $\sup_{P \in B_\delta(Q)} \|v^{(P)}(-+P)\|_{L^\infty(\partial B_{r_\varepsilon})} < r_\varepsilon$, \eqref{vqnearvp} immediately implies that $$\left|M^{Q}(r_\varepsilon,v^{(Q)}, \tilde{p}^{Q}) - \frac{1}{r_\varepsilon^{n+1}}\int_{\partial B_{r_\varepsilon}} (v^{(P)}(x+P) - \tilde{p}^{Q})^2\right| < C\varepsilon,\; \forall P \in B_\delta(Q).$$ By definition, $M^{Q}(r_\varepsilon,v^{(Q)}, \tilde{p}^{Q}) < \varepsilon$, so it follows that
$$M^P(r_\varepsilon, v^{(P)}, \tilde{p}^{Q}) \equiv \frac{1}{r_\varepsilon^{n+1}}\int_{\partial B_{r_\varepsilon}} (v^{(P)}(x+P) - \tilde{p}^{Q})^2 < C\varepsilon,\; \forall P \in B_\delta(Q).$$  

Repeated application of Lemma \ref{growthofM} yields, $$M^{P}(r_\varepsilon, v^{(P)}, \tilde{p}^{Q}) - M^{P}(0, v^{(P)}, \tilde{p}^{Q}) > -(C+C\|\tilde{p}^{Q}\|_{L^\infty(\partial B_1)})r_\varepsilon^{\alpha/2},\; \forall P\in B_\delta(Q)\Rightarrow$$$$C\varepsilon > M^P(0,v^{(P)}, \tilde{p}^Q)= \int_{\partial B_1} (\tilde{p}^{P} - \tilde{p}^Q)^2,\; \forall P\in B_\delta(Q).$$ That the first line implies the second follows from $\|\tilde{p}^{Q}\|_{L^\infty(\partial B_1)} = \Theta^{n-1}(\omega^-, Q) < C$ uniformly on compacta, $r_\varepsilon^{\alpha/2} < \varepsilon$ and $M^{P}(r_\varepsilon, v^{(P)}, \tilde{p}^{Q}) < C\varepsilon$. The equality in the second line follows from the standard blowup argument (see the proof of Lemma \ref{uniqueblowup}) and allows us to conclude that $Q \mapsto \tilde{p}^{Q}$ is continuous from $\partial \Omega \rightarrow L^2(\partial B_1)$. \end{proof}

\begin{cor}\label{continuityofdensity}
The function $Q \mapsto \Theta^{n-1}(\omega^-, Q)$ is continuous. Additionally, the function $Q \mapsto \{p^{Q} = 0\}$ is continuous (from $\partial \Omega$ to $\mathrm{G}(n, n-1)$). 
\end{cor}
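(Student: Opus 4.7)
The plan is to read off both assertions directly from Proposition \ref{continuityofblowup}, which already gives continuity of the map $Q \mapsto \tilde{p}^Q = \Theta^{n-1}(\omega^-, Q)\, p^Q$ from $\partial\Omega$ into $L^2(\partial B_1)$. The only extra input I need, together with the pointwise non-degeneracy of Proposition \ref{nondegeneracy}, is the structural description of $p^Q$: by Corollary \ref{blowupsforeveryone} and Remark \ref{vobservations} every blowup of $v^{(Q)}$ has the form $p^Q(x) = x\cdot\nu^Q$ for some unit vector $\nu^Q\in S^{n-1}$. In particular $\|p^Q\|_{L^2(\partial B_1)} = c_n$ depends only on the dimension, so $\|\tilde{p}^Q\|_{L^2(\partial B_1)} = c_n\,\Theta^{n-1}(\omega^-, Q)$.

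With this identity in hand, the continuity of $Q \mapsto \Theta^{n-1}(\omega^-, Q)$ falls out by simply taking $L^2(\partial B_1)$-norms on both sides of the continuous assignment $Q\mapsto \tilde{p}^Q$. Once this scalar factor is known to be continuous and strictly positive on $\partial\Omega$ (Proposition \ref{nondegeneracy}), division is a continuous operation and I recover $p^Q = \tilde{p}^Q/\Theta^{n-1}(\omega^-, Q)$ as a continuous map into $L^2(\partial B_1)$; because the blowups live in the finite-dimensional space of $1$-homogeneous linear polynomials, this is equivalent to continuity in every reasonable norm.

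For the second claim, I would extract $\nu^Q$ from $p^Q$ by pairing $p^Q(x)=x\cdot\nu^Q$ against each coordinate function $x_i$ in $L^2(\partial B_1)$, which produces the components of $\nu^Q$ as continuous functionals of $p^Q$. Hence $Q\mapsto \nu^Q\in S^{n-1}$ is continuous, and since the assignment $\nu\mapsto \nu^\perp$ from $S^{n-1}$ onto $G(n,n-1)$ is itself continuous, the composite $Q\mapsto \{p^Q=0\}$ is continuous. There is really no substantive obstacle: every step is a soft consequence of Proposition \ref{continuityofblowup} together with the fact that blowups are linear polynomials with unit $L^2$-norm, which cleanly isolates $\Theta^{n-1}(\omega^-, Q)$ as a scalar prefactor that can be peeled off.
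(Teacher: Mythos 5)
Your argument is correct, and for the first claim it is actually cleaner than what the paper does. Both proofs start from Proposition \ref{continuityofblowup} and both exploit the key structural fact, from Corollary \ref{blowupsforeveryone}, that $p^Q(x) = x\cdot\nu^Q$ with $|\nu^Q|=1$. The paper, however, establishes continuity of $Q\mapsto \Theta^{n-1}(\omega^-,Q)$ by an elementary-geometric device: given $Q_1,Q_2$, it locates a hyperplane $P_3$ at a definite Hausdorff distance from the two tangent hyperplanes, so that there is a point $y\in\partial B_1$ with $|p^{Q_1}(y)|=|p^{Q_2}(y)|\geq\tilde{c}>0$, and then reads off
\[
\|\tilde{p}^{Q_1}-\tilde{p}^{Q_2}\|_{L^\infty(\partial B_1)} \;\geq\; \tilde{c}\,\bigl|\Theta^{n-1}(\omega^-,Q_1)-\Theta^{n-1}(\omega^-,Q_2)\bigr|.
\]
You replace this with the algebraic observation that $\|x\cdot\nu\|_{L^2(\partial B_1)}$ is a dimensional constant independent of $\nu\in\mathbb{S}^{n-1}$, so $\|\tilde{p}^Q\|_{L^2(\partial B_1)} = c_n\,\Theta^{n-1}(\omega^-,Q)$ and continuity of $\Theta^{n-1}$ drops out by taking norms. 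This avoids the equidistant-hyperplane construction entirely and is, to my eye, the more transparent route. For the second claim both you and the paper do essentially the same thing: with $\Theta^{n-1}$ continuous and pointwise positive (Proposition \ref{nondegeneracy}), divide to get $p^Q = \tilde{p}^Q/\Theta^{n-1}(\omega^-,Q)$ continuous, and then recover $\nu^Q$ (and hence $\{p^Q=0\}\in G(n,n-1)$) by continuous operations on the finite-dimensional space of linear polynomials; your use of the $L^2(\partial B_1)$ pairing with coordinate functions makes this last step explicit where the paper calls it ``clear.''
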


\begin{proof}
Clearly the first claim, combined with Proposition \ref{continuityofblowup}, implies the second. 

For $Q_1, Q_2\in \partial \Omega$, if $P_1 = \{p^{Q_1} = 0\}, P_2 = \{p^{Q_2}=0\}$ are distinct hyperplanes with normals $\hat{n}_1, \hat{n}_2$, then both $(\hat{n}_1+\hat{n}_2)^\perp$ and $(\hat{n}_1- \hat{n}_2)^\perp$ consist of points equidistant from $P_1$ and $P_2$. Elementary geometry then shows that there is some constant $c > 0$ such that $$\max \{D[(\hat{n}_1+\hat{n}_2)^\perp\cap B_1(0), P_1\cap B_1(0)], D[(\hat{n}_1-\hat{n}_2)^\perp\cap B_1(0), P_1\cap B_1(0)]\} \geq c.$$ Let $P_3(Q_1,Q_2)$ be the plane which achieves this maximum.  If $P_1 = P_2$ then pick $P_3(Q_1,Q_2)$ to be any hyperplane such that $D[P_1\cap B_1(0), P_3\cap B_1(0)] \geq c$. 

 Recall Corollary \ref{blowupsforeveryone}, which implies that $p^{Q}$ is a monic 1-homogenous polynomial for all $Q\in \partial \Omega$. So if $y \in P_3(Q_1,Q_2)\cap \partial B_1(0)$, there is an universal $\tilde{c} > 0$ such that $\tilde{c} < |p^{Q_1}(y)| = |p^{Q_2}(y)|$. 

Therefore, $$\begin{aligned}\|\tilde{p}^{Q_1}- \tilde{p}^{Q_2}\|_{L^\infty(\partial B_1)} \geq&  |\Theta^{n-1}(\omega^-, Q_1)p^{Q_1}(y) -  \Theta^{n-1}(\omega^-, Q_2)p^{Q_2}(y)|\\ \geq& \tilde{c} |\Theta^{n-1}(\omega^-, Q_1) - (\mathrm{sgn}\; p^{Q_1}(y)p^{Q_2}(y))\Theta^{n-1}(\omega^-, Q_2)|.\end{aligned}$$ If $\mathrm{sgn}\; p^{Q_1}(y)p^{Q_2}(y) = -1$ ($p^{Q_1}(y)$ and $p^{Q_2}(y)$ have opposite signs), then $$\|\tilde{p}^{Q_1}- \tilde{p}^{Q_2}\|_{L^\infty(\partial B_1)} \geq \tilde{c}(\Theta^{n-1}(\omega^-, Q_1) + \Theta^{n-1}(\omega^-, Q_2)) \geq \tilde{c}\Theta^{n-1}(\omega^-, Q_1).$$ Letting $Q_2 \rightarrow Q_1$, the continuity of $Q\mapsto \tilde{p}^{Q}$ (Proposition \ref{continuityofblowup}) implies that $0 \geq \tilde{c} \Theta^{n-1}(\omega^-, Q_1)$. This contradicts the non-degeneracy of $\Theta^{n-1}(\omega^-, Q_1)$ (Proposition \ref{nondegeneracy}).

 On the other hand, if $\mathrm{sgn}\; p^{Q_1}(y)p^{Q_2}(y) = 1$ ($p^{Q_1}(y)$ and $p^{Q_2}(y)$ share the same sign), then $$\|\tilde{p}^{Q_1}- \tilde{p}^{Q_2}\|_{L^\infty(\partial B_1)} \geq \tilde{c}|\Theta^{n-1}(\omega^-, Q_1) - \Theta^{n-1}(\omega^-, Q_2)|,$$ and the continuity of $Q \mapsto \tilde{p}^{Q}$  implies that $Q\mapsto \Theta^{n-1}(\omega^-, Q)$ is continuous.  
\end{proof}

Uniform non-degeneracy immediately follows.

\begin{cor}\label{uniformlowerbound}
For any $K \subset\subset \R^n$ there is a $c = c(K) > 0$ such that, for all $Q \in K\cap \partial \Omega$, $$\Theta^{n-1}(\omega^{\pm}, Q) > c.$$
\end{cor}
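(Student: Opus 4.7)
The plan is to combine the pointwise non-degeneracy (Proposition \ref{nondegeneracy}) with the continuity statement (Corollary \ref{continuityofdensity}) and the elementary fact that a positive continuous function on a compact set attains a positive minimum. In other words, all the heavy lifting was done in the previous two subsections; this corollary is the immediate harvest.

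First I would fix a compact $K \subset\subset \R^n$ and observe that $\partial \Omega$ is closed, so $K \cap \partial \Omega$ is compact. Next, Corollary \ref{continuityofdensity} tells us that $Q \mapsto \Theta^{n-1}(\omega^-, Q)$ is a continuous real-valued function on $\partial \Omega$, hence on $K \cap \partial \Omega$. By Proposition \ref{nondegeneracy} this function is strictly positive at every point of $K \cap \partial \Omega$. A continuous positive function on a compact set achieves its infimum, which is therefore strictly positive; set
\[
c_-(K) \colonequals \min_{Q \in K \cap \partial \Omega} \Theta^{n-1}(\omega^-, Q) > 0.
\]
(If $K \cap \partial \Omega = \emptyset$ the conclusion is vacuous.)

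To transfer this to $\Theta^{n-1}(\omega^+, \cdot)$, I would invoke Remark \ref{rnderivative}, which gives the pointwise identity $\Theta^{n-1}(\omega^-, Q) = h(Q)\,\Theta^{n-1}(\omega^+, Q)$ valid for every $Q \in \partial \Omega$. Since $\log h \in C^{0,\alpha}(\partial \Omega)$, the function $h$ is continuous and strictly positive on the compact set $K \cap \partial \Omega$, so it is bounded above by some $M(K) < \infty$. Therefore
\[
\Theta^{n-1}(\omega^+, Q) \geq \frac{c_-(K)}{M(K)} \equalscolon c_+(K) > 0
\]
for every $Q \in K \cap \partial \Omega$, and taking $c(K) \colonequals \min\{c_-(K), c_+(K)\}$ finishes the proof.

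There is essentially no obstacle here: the work was in proving pointwise non-degeneracy (the Almgren/Monneau argument of Section \ref{sec: thetanondegenerate}) and in upgrading pointwise existence of the blowup to continuity of $Q \mapsto \tilde{p}^Q$ (Proposition \ref{continuityofblowup}). Once the density is known to be a positive continuous function, uniformity on compacta is a one-line consequence of compactness. The only small care needed is noting that $\partial \Omega$ is closed so that $K \cap \partial \Omega$ is genuinely compact, and observing that $h$ is bounded and bounded away from zero on compacta (so that the $\omega^+$ conclusion follows from the $\omega^-$ one).
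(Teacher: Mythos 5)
Your proof is correct and follows exactly the route the paper intends: the paper gives no explicit proof of this corollary, simply stating that "uniform non-degeneracy immediately follows" from Proposition \ref{nondegeneracy} and Corollary \ref{continuityofdensity}, which is precisely the compactness argument you spell out. The transfer from $\omega^-$ to $\omega^+$ via $\Theta^{n-1}(\omega^-, Q) = h(Q)\,\Theta^{n-1}(\omega^+, Q)$ and local boundedness of $h$ is a clean way to handle the $\pm$ in the statement.
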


\subsection{$\partial \Omega$ is a $C^1$ domain} We define for $Q_0\in \partial \Omega$ and $r > 0$ \begin{equation}\label{definitionofbeta}\beta(Q_0,r) = \inf_{P} \frac{1}{r}\sup_{Q\in \partial \Omega \cap B_r(Q_0)} \mathrm{dist}(Q, P)\end{equation} where the infimum is taken over all $(n-1)$-dimensional hyperplanes through $Q_0$ (these are a variant of Jones' $\beta$-numbers, see \cite{jones}). David, Kenig and Toro (see \cite{davidkenigtoro}, Proposition 9.1) show that, under suitable assumptions, $\beta(Q_0,r) \lesssim r^\gamma$  implies that $\partial \Omega$ is locally the graph of a $C^{1,\gamma}$ function for any $1 > \gamma > 0$. We will adapt this proof to show that $\partial \Omega$ is locally the graph of a $C^1$ function. 

For any $Q_0 \in \partial \Omega$, $$P(Q_0) \colonequals \{p^{Q_0} = 0\}$$ (where $p^{Q_0}$ is the 1-homogenous polynomial guaranteed to exist by Corollary \ref{blowupsforeveryone} and which is unique by Lemma \ref{uniqueblowup}). By the definition of blowups, we know that $P(Q_0)+Q_0$ approximates $\partial \Omega$ near $Q_0$. The following lemma shows that this approximation is uniformly tight in $Q_0$. 

\begin{lem}\label{uniformplaneconvergence}[Compare to \cite{davidkenigtoro}, equation 9.14]
Let $K \subset \subset \R^n$ and $\varepsilon > 0$. Then there is an $R = R(K, \varepsilon) > 0$ such that $r < R$ and $Q_0 \in K \cap \partial \Omega$ implies \begin{equation}\label{planeconvergence}\sup_{Q\in \partial \Omega \cap B_r(Q_0)} \frac{1}{r}\mathrm{dist}\left(Q-Q_0, P(Q_0)\right) < \varepsilon.\end{equation}
\end{lem}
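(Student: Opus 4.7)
The plan is a compactness and contradiction argument that upgrades the pointwise Hausdorff convergence of the rescaled boundaries $r^{-1}(\partial\Omega - Q_0) \to P(Q_0)$ to a uniform statement in $Q_0 \in K \cap \partial \Omega$. The three ingredients are Corollary \ref{continuityofdensity} (continuity of $Q \mapsto P(Q)$ in $G(n,n-1)$), Theorem \ref{blowuplimits} together with Corollary \ref{blowupsforeveryone} (the pseudo-blowup is always a hyperplane), and the Monneau potential technology of Section \ref{sec: thetanondegenerate} applied with a uniform-in-$Q$ rate, essentially as in the proof of Proposition \ref{continuityofblowup}.

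Suppose the statement fails. Then there exist $\varepsilon_0 > 0$ and sequences $Q_j \in K \cap \partial \Omega$, $r_j \downarrow 0$, and $P_j \in \partial \Omega \cap B_{r_j}(Q_j)$ with $\mathrm{dist}(P_j - Q_j, P(Q_j)) \geq \varepsilon_0 r_j$. Passing to subsequences, $Q_j \to Q_\infty \in K \cap \partial \Omega$ and $\tilde P_j := (P_j - Q_j)/r_j \to \tilde P_\infty \in \overline{B_1}$. Corollary \ref{continuityofdensity} gives $P(Q_j) \to P(Q_\infty)$, so (since $P(Q_j)$ is a linear subspace through $0$) one has $\mathrm{dist}(\tilde P_\infty, P(Q_\infty)) \geq \varepsilon_0/2$. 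It then suffices to show $\tilde P_\infty \in P(Q_\infty)$ to reach a contradiction.

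Fix $\varepsilon > 0$. By equation \eqref{computationforlimitat0} applied at $Q_\infty$, $M^{Q_\infty}(0, v^{(Q_\infty)}, \tilde p^{Q_\infty}) = 0$, so there is $r_0 = r_0(\varepsilon)$ with $M^{Q_\infty}(r_0, v^{(Q_\infty)}, \tilde p^{Q_\infty}) < \varepsilon$ and $r_0^{\alpha/2} < \varepsilon$. The continuity argument inside the proof of Proposition \ref{continuityofblowup} (which uses only that $v^{(Q)} = h(Q) u^+ - u^-$ depends continuously on $Q$ via $h \in C^\alpha$ and is uniformly Lipschitz on compacta by Remark \ref{vobservations}) gives $M^{Q_j}(r_0, v^{(Q_j)}, \tilde p^{Q_\infty}) < C\varepsilon$ for all $j$ large. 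Iterating Lemma \ref{growthofM} as in the proof of Lemma \ref{limitofM},
\begin{equation*}
M^{Q_j}(r, v^{(Q_j)}, \tilde p^{Q_\infty}) < C\varepsilon \quad \text{for all } r \leq r_0.
\end{equation*}
Rescaling at scale $r = s r_j$ and using the $1$-homogeneity of $\tilde p^{Q_\infty}$ gives, with $w_j(y) := v^{(Q_j)}(r_j y + Q_j)/r_j$,
\begin{equation*}
\frac{1}{s^{n+1}}\int_{\partial B_s}\bigl(w_j - \tilde p^{Q_\infty}\bigr)^2 \, d\sigma < C\varepsilon \quad \text{whenever } s r_j \leq r_0.
\end{equation*}
The $w_j$ are uniformly Lipschitz on compacta by Remark \ref{vobservations}, so by Arzel\`a--Ascoli together with a diagonal argument in $\varepsilon \downarrow 0$, a subsequence satisfies $w_j \to \tilde p^{Q_\infty}$ uniformly on $\overline{B_1}$. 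Since $P_j \in \partial \Omega$ forces $v^{(Q_j)}(P_j) = 0$, we have $w_j(\tilde P_j) = 0$; passing to the limit yields $\tilde p^{Q_\infty}(\tilde P_\infty) = 0$, i.e.\ $\tilde P_\infty \in P(Q_\infty)$, contradicting the previous paragraph.

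The main obstacle is transferring the pointwise statement \emph{``$v^{(Q_\infty)}$ is well approximated by $\tilde p^{Q_\infty}$ at small scales around $Q_\infty$''} into the uniform statement \emph{``$v^{(Q_j)}$ is well approximated by $\tilde p^{Q_\infty}$ at small scales around $Q_j$, uniformly for $Q_j$ close to $Q_\infty$''}. This is precisely what the combination of Proposition \ref{continuityofblowup} (which provides the needed continuity of the tangent polynomial in the basepoint) and Lemma \ref{growthofM} (which propagates a single scale estimate on $M$ down to all smaller scales) provides; once that is in hand, the rest of the argument is standard compactness.
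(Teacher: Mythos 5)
Your argument is correct, and it reaches the conclusion by a genuinely different structural route than the paper. The paper proves the uniform Taylor-type estimate \eqref{taylorexpansion} directly: it uses \eqref{computationforlimitat0} and the continuity of $Q \mapsto M^{Q}(r, v^{(Q)}, \tilde p^{Q})$ at a fixed scale to cover $K \cap \partial\Omega$ by finitely many balls $B_{\delta_i}(Q_i)$, then iterates Lemma~\ref{growthofM} to push the smallness of the Monneau potential down to all scales $r < \min_i R_\varepsilon(Q_i)$, yielding a uniform modulus $\sigma_K$. You instead negate the statement, extract convergent sequences $Q_j \to Q_\infty$, $\tilde P_j \to \tilde P_\infty$, and show that the rescalings $w_j$ converge uniformly to $\tilde p^{Q_\infty}$; both arguments lean on the same two workhorses (Lemma~\ref{growthofM} and the continuity machinery from Proposition~\ref{continuityofblowup}), but you compare $v^{(Q_j)}$ against the fixed polynomial $\tilde p^{Q_\infty}$ of the limit point, which is exactly what a compactness-by-contradiction argument wants, whereas the paper compares each $v^{(Q')}$ against its own $\tilde p^{Q'}$, which is what a direct uniform estimate wants. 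The trade-off is that the paper's direct route also delivers the quantitative expansion \eqref{taylorexpansion} with a modulus $\sigma_K$ uniform in $Q_0$; this stronger statement is reused later (the uniformity claimed in Remark~\ref{convergencetotwoplane} and the alternate $C^1$ argument via Whitney extension), and your contradiction argument, as written, establishes only the qualitative conclusion of the lemma, not the quantitative Taylor expansion. As a proof of the lemma as stated, though, it is complete and correct; the ``diagonal argument in $\varepsilon$'' is unnecessary (you already get $w_j \to \tilde p^{Q_\infty}$ in $L^2(B_1)$ directly since $r_j \downarrow 0$), and the factor $\varepsilon_0/2$ in the second paragraph can in fact be kept as $\varepsilon_0$, but neither affects correctness.
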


\begin{proof}
The proof hinges on the following estimate (see \cite{gandp} Theorem 1.5.5); for any $K$ compact there exists a modulus of continuity $\sigma_K$ with $\lim_{t\downarrow 0}\sigma_K(t) = 0$ such that  \begin{equation}\label{taylorexpansion}
|v^{(Q_0)}(x+Q_0) - \tilde{p}^{Q_0}(x)| \leq \sigma_K(|x|)|x|
\end{equation} for any $Q_0 \in K \cap \partial \Omega$. 

Assume this estimate is true; let $Q\in B_r(Q_0)\cap \partial \Omega$ and write $Q= Q_0 + x$. As $\Theta^{n-1}(\omega^-, Q_0) > c$ for all $Q_0 \in K\cap \partial \Omega$ (Corollary \ref{uniformlowerbound}) it follows that $\mathrm{dist}(Q-Q_0, P(Q_0)) \lesssim |\tilde{p}^{Q_0}(x)|$. Then \eqref{taylorexpansion} yields that $\mathrm{dist}(Q-Q_0, P(Q_0)) \lesssim |\tilde{p}^{Q_0}(x)| \leq |x| \sigma_K(|x|) =r \sigma_K(r)$. Set $R$ to be small enough so that $r < R$ implies $\sigma_K(r) < \varepsilon$ to prove \eqref{planeconvergence}.

Thus it suffices to establish \eqref{taylorexpansion}. Let $|x| = r$ and write $x = ry$ with $|y| = 1$. If we divide by $r$, \eqref{taylorexpansion} is equivalent to \begin{equation}\label{taylorexpansiontransformed} |v^{(Q_0)}(ry+Q_0)/r - \tilde{p}^{Q_0}(y)| \leq \sigma_K(r).\end{equation}

As $v^{(Q)}(ry+Q)/r$ is locally Lipschitz (uniformly in $Q$ on compacta), the uniform estimate \eqref{taylorexpansiontransformed} follows from an $L^2$ estimate: for all $\varepsilon > 0$, there exists a $R = R_{K,\varepsilon} > 0$ such that if $r < R$ and $Q_0 \in K\cap \partial \Omega$ then $$M^{Q_0}(r, v^{(Q_0)}, \tilde{p}^{Q_0}) \equiv \|v^{(Q_0)}(ry + Q_0)/r - \tilde{p}^{Q_0}(y)\|^2_{L^2(\partial B_1)}  <  \varepsilon.$$
 
For each point $Q\in K\cap \partial \Omega$ we can find an $R = R_\varepsilon(Q)$ such that $R << \varepsilon$ and for all $r < R,\; |M^{Q}(r, v^{(Q)}, \tilde{p}^{Q}) |< \varepsilon/4$. Furthermore, for every $r > 0$ there is a $\delta(r) > 0$ such that for $Q,Q' \in K\cap \partial \Omega$ we have $$
|Q-Q'| < \delta(r) \Rightarrow |M^{Q'}(r, v^{(Q')}, \tilde{p}^{Q'}) - M^{Q}(r, v^{(Q)}, \tilde{p}^{Q})| < \varepsilon/4.$$ The existence of $\delta(r)$ follows from the uniform Lipschitz continuity of $v^{(Q)}$, the H\"older continuity of $h$ and the continuity of $Q\mapsto \tilde{p}^{Q}$.

 As $K$ is compact we can find $Q_1,...,Q_n \in K\cap \partial \Omega$ such that if $\delta_1\colonequals \delta(R_\varepsilon(Q_1)),...,\delta_n\colonequals \delta(R_\varepsilon(Q_n))$ then $K\cap \partial \Omega \subset \bigcup B_{\delta_i}(Q_i)$. By the definition of $\delta_i$, if $Q'\in B_{\delta_i}(Q_i)$, then $M^{Q'}(R_\varepsilon(x_i), v^{(Q')}, \tilde{p}^{Q'}) < \varepsilon/2$. Recall, $R_\varepsilon(Q_i) << \varepsilon$ and Lemma \ref{growthofM} to conclude that for all $Q' \in B_{\delta_i}(Q_i)$ and $r < R_\varepsilon(Q_i), M^{Q'}(r, v^{(Q')}, \tilde{p}^{Q'}) < \varepsilon$.  Therefore, setting $R_{K,\varepsilon} \equiv \min_i \{R_{\varepsilon}(Q_i)\}$ gives the $L^2$ estimate $r < R_{K,\varepsilon}, Q' \in K \cap \partial \Omega \Rightarrow M^{Q'}(r, v^{(Q')}, \tilde{p}^{Q'}) < \varepsilon$. 
  \end{proof}

We should note, \eqref{taylorexpansion} (along with the Whitney extension theorem) allows for an alternative proof that $\partial \Omega$ is a $C^1$ domain (see \cite{gandp} Theorem 1.3.8).  We will, however, continue our proof in the vein of \cite{davidkenigtoro}.

\begin{prop}\label{boundaryisc1}
Let $\Omega \subset \R^n$ satisfy the conditions of Theorem \ref{maintheorem} or Theorem \ref{maintheoremprime}. If $\log(h) \in C^{0,\alpha}(\partial \Omega)$ then $\Omega$ is a $C^1$ domain.
\end{prop}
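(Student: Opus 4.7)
The plan is to combine the uniform tangent-plane approximation from Lemma \ref{uniformplaneconvergence} with the continuity of $Q \mapsto P(Q)$ from Corollary \ref{continuityofdensity} and the two-sided NTA corkscrew condition to build a local $C^1$ graph parametrization of $\partial \Omega$, following the scheme of \cite{davidkenigtoro}, Proposition 9.1, but replacing their H\"older $\beta$-number bound with a qualitative ``vanishing $\beta$'' estimate.

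Fix $Q_0 \in \partial \Omega$ and a small compact neighborhood $K$ of $Q_0$. By Lemma \ref{uniformplaneconvergence}, for every $\varepsilon > 0$ there exists $R = R(K,\varepsilon)$ such that, for every $Q \in K \cap \partial \Omega$ and every $0 < r < R$,
\[
\sup_{Q' \in \partial \Omega \cap B_r(Q)} \frac{\mathrm{dist}(Q' - Q,\, P(Q))}{r} < \varepsilon.
\]
In particular, the Jones $\beta$-number $\beta(Q,r)$ defined in \eqref{definitionofbeta} satisfies $\beta(Q,r) \leq \varepsilon$ uniformly for $Q \in K \cap \partial \Omega$ and $r < R$. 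The first step is then to use this bound, together with the two-sided corkscrew condition (which forces $\Omega^\pm$ to occupy opposite sides of $Q_0 + P(Q_0)$ inside $B_r(Q_0)$), to conclude that, for $\varepsilon$ sufficiently small relative to the NTA constants, $\partial \Omega \cap B_{r_0}(Q_0)$ is the graph of a Lipschitz function $f$ over $P(Q_0)$ with Lipschitz constant $O(\varepsilon)$. This is a standard small-constant Reifenberg construction: surjectivity of the orthogonal projection onto $P(Q_0)$ follows from the corkscrew condition applied at every scale, and injectivity follows by applying the $\beta$-bound at each point of $\partial \Omega$, not just at $Q_0$, so that a second preimage would contradict the uniform proximity to its own tangent plane.

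The second step is to upgrade $f$ from Lipschitz to $C^1$ by identifying $\nabla f(Q)$ with the tangent plane $P(Q)$, expressed as a graph over $P(Q_0)$. The uniform estimate above applied at each $Q \in \partial \Omega \cap B_{r_0/2}(Q_0)$ shows that $P(Q)$ is the \emph{unique} affine subspace with the property that $\sup_{Q' \in \partial\Omega \cap B_r(Q)} \mathrm{dist}(Q' - Q, P(Q))/r \to 0$ as $r \downarrow 0$; this identifies $P(Q)$ with the classical tangent plane of the graph of $f$ at $Q$, so $\nabla f(Q)$ is recovered from $P(Q)$. The continuity of $Q \mapsto P(Q)$ from $\partial \Omega$ into the Grassmannian $G(n,n-1)$ (Corollary \ref{continuityofdensity}) then translates into continuity of $Q \mapsto \nabla f(Q)$, and since the small-Lipschitz bound makes the change of coordinates from $P(Q)$ to $P(Q_0)$ a uniformly smooth map on compacta, this continuity is preserved when $\nabla f$ is viewed as a map into a fixed Euclidean space. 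Hence $f \in C^1$, i.e.\ $\partial \Omega$ is a $C^1$ domain.

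The main technical obstacle is the first step: the uniform $\beta$-bound of Lemma \ref{uniformplaneconvergence} gives closeness to \emph{some} plane but does not, by itself, prevent $\partial \Omega$ from crossing that plane multiple times. It is precisely the two-sided NTA hypothesis (together with the uniformity of the $\beta$-estimate across \emph{every} $Q \in K \cap \partial \Omega$) that rules out such configurations, and one must choose $\varepsilon$ small relative to the NTA constants to carry this out; in the Lipschitz case this reduces to Remark \ref{lipschitzdomain}, and in the Reifenberg case it is the original setting of \cite{davidkenigtoro}. The passage from their quantitative $C^{1,\gamma}$ conclusion (under a H\"older $\beta$-number bound) to our qualitative $C^1$ conclusion (under only vanishing $\beta$-numbers with uniform modulus) amounts to running the same iteration with the modulus $\sigma_K$ in place of $r^\gamma$.
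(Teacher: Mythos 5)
Your proposal is correct and follows essentially the same route as the paper's proof: establish the uniform vanishing-$\beta$ estimate via Lemma \ref{uniformplaneconvergence}, use the corkscrew/Reifenberg-flatness machinery together with the uniform plane approximation (applied at every nearby boundary point, with the plane varying continuously by Corollary \ref{continuityofdensity}) to build a small-Lipschitz graph over $P(Q_0)$, and then identify $\nabla f$ with $P(\cdot)$ to upgrade Lipschitz to $C^1$. The paper's Lipschitz-injectivity step (equation \eqref{planeestimate}) makes explicit the same reliance on continuity of $Q \mapsto P(Q)$ that your first step uses implicitly when you invoke the $\beta$-bound at the second preimage; otherwise the two arguments coincide.
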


\begin{proof}
For $Q_0 \in \partial \Omega$, equation \ref{planeconvergence} shows that $P(Q_0) + Q_0$ is a tangent plane to $\partial \Omega$ at $Q_0$. Furthermore, $Q_0 \mapsto P(Q_0)$ is continuous (Corollary \ref{continuityofdensity}). Under the assumptions of Theorem \ref{maintheoremprime}, $\Omega$ is a Lipschitz domain with a tangent plane at every $Q\in \partial \Omega$ that varies continuously in $Q$; thus we are done. 

If we simply assume that $\Omega$ is Reifenberg flat (Theorem \ref{maintheorem}), we still need to show that $\Omega$ is a graph domain (in fact we will show it is a Lipschitz domain).  Let $R = R_{K,\varepsilon} > 0$ be chosen later and let $r < R$. If $R$ is small enough, vanishing Reifenberg flatness (Corollary \ref{blowupsforeveryone}), along with Lemma \ref{uniformplaneconvergence}, implies $$\pi(\{\partial \Omega \cap B(Q_0, r)-Q_0\}) \supset P(Q_0) \cap B(0, \frac{r}{2}),\; \forall Q_0 \in K\cap \partial \Omega, r < R.$$  Here $\pi: \R^n \rightarrow P(Q_0)$ is a projection (for more details see the proof of \cite{davidkenigtoro} Lemma 8.3 or \cite{kenigtoroduke} Remark 2.2). 

We need only to show that $\pi^{-1}$ is a well defined function with bounded Lipschitz norm on $P(Q_0)\cap B(0,r/2)$. Let $\Sigma \colonequals (\partial \Omega-Q_0) \cap B(0, r) \cap \pi^{-1}(B(0, r/2))$ and pick distinct $Q_1,Q_2\in \Sigma$. Perhaps shrinking $R$ again, the continuity of $Q\mapsto P(Q)$, combined with Lemma \ref{uniformplaneconvergence}, implies \begin{equation}\label{planeestimate}\frac{1}{|Q_1-Q_2|} \mathrm{dist}(Q_1-Q_2, P(Q_0)) < \varepsilon.\end{equation} Therefore, $\pi^{-1}$ is well defined and $\|\pi^{-1}\|_{\mathrm{Lip}(P(Q_0)\cap B(0,r/2))} < (1-\varepsilon)^{-1}$.
\end{proof}

It should be noted that if $\Omega$ is a $C^1$ domain it is not necessarily true that $u \in C^1(\overline{\Omega})$ (see \cite{pommeranke}, pg 45). However, as $\Theta^{n-1}(\omega^\pm, Q)$ is continuous, we can establish the following.

\begin{cor}\label{c1extension}
Let $\Omega, \log(h)$ be as in Proposition \ref{boundaryisc1}. Then $u^\pm \in C^1(\overline{\Omega^{\pm}})$. 
\end{cor}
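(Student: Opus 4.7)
The plan is to define the boundary values of $\nabla u^+$ (and symmetrically $\nabla u^-$) in terms of the density $\Theta^{n-1}(\omega^+, \cdot)$ and the unit inward normal $\nu$ to $\partial \Omega$, and then verify continuity via a rescaling argument centered at the foot of the perpendicular. The case of $u^-$ is symmetric, so I focus on $u^+$. Since $\partial \Omega$ is $C^1$ (Proposition \ref{boundaryisc1}), at each $Q \in \partial \Omega$ there is a well-defined inward unit normal $\nu(Q)$ to $\Omega^+$, and $Q \mapsto \nu(Q)$ is continuous (it is the oriented unit normal to the tangent plane $P(Q) + Q$, which varies continuously by Corollary \ref{continuityofdensity}). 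I will set
\[
\nabla u^+(Q) := \Theta^{n-1}(\omega^+, Q)\,\nu(Q) \qquad (Q \in \partial \Omega),
\]
which is continuous on $\partial \Omega$ by Corollary \ref{continuityofdensity}, and show that the classical gradient on $\Omega^+$ extends continuously to $\overline{\Omega^+}$ with these values.

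Fix $Q_0 \in \partial \Omega$ and let $X_k \in \Omega^+$ with $X_k \to Q_0$. Let $Q_k \in \partial \Omega$ realize $\delta_k := \mathrm{dist}(X_k, \partial \Omega) = |X_k - Q_k|$; then $Q_k \to Q_0$ and, because $\partial \Omega$ is $C^1$, the direction $y_k^\ast := (X_k - Q_k)/\delta_k$ equals $\nu(Q_k)$, hence $y_k^\ast \to \nu(Q_0)$. Consider the rescalings $U_k(y) := u^+(\delta_k y + Q_k)/\delta_k$; by Proposition \ref{locallylipschitz} the $U_k$ are uniformly Lipschitz on compacta, each is harmonic on $(\Omega^+ - Q_k)/\delta_k$, and each vanishes on $(\partial \Omega - Q_k)/\delta_k$.

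The crucial step is to identify the limit $U_\infty(y) = \Theta^{n-1}(\omega^+, Q_0)(y \cdot \nu(Q_0))_+$. I combine Lemma \ref{uniformplaneconvergence} (uniform-in-base-point decay of $M^{Q}(r, v^{(Q)}, \tilde{p}^{Q})$ on compacta) with the continuity $\tilde{p}^{Q_k} \to \tilde{p}^{Q_0}$ from Proposition \ref{continuityofblowup} to obtain $v^{(Q_k)}(\delta_k y + Q_k)/\delta_k \to \tilde{p}^{Q_0}(y)$ in $L^2(\partial B_s)$ for every $s > 0$. The uniform Lipschitz bound then promotes this to locally uniform convergence. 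Using $v^{(Q_k)} = h(Q_k) u^+$ on $\Omega^+$ together with the identity $\Theta^{n-1}(\omega^-, Q_0) = h(Q_0)\Theta^{n-1}(\omega^+, Q_0)$ from Remark \ref{rnderivative} yields the claimed form of $U_\infty$.

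To conclude, note that by Corollary \ref{blowupsforeveryone} the rescaled domains $(\Omega^+ - Q_k)/\delta_k$ converge in Hausdorff distance on compacta to the halfspace $\{y \cdot \nu(Q_0) > 0\}$. Since $U_k$ is harmonic and uniformly bounded there and $U_k \to U_\infty$ locally uniformly, standard interior estimates for harmonic functions give $\nabla U_k \to \nabla U_\infty \equiv \Theta^{n-1}(\omega^+, Q_0)\nu(Q_0)$ uniformly on compact subsets of the open halfspace. Because $y_k^\ast$ is at distance exactly $1$ from $\partial((\Omega^+ - Q_k)/\delta_k)$ and $y_k^\ast \to \nu(Q_0)$, we obtain
\[
\nabla u^+(X_k) = \nabla U_k(y_k^\ast) \;\longrightarrow\; \Theta^{n-1}(\omega^+, Q_0)\,\nu(Q_0),
\]
matching the boundary definition; continuity at interior points is standard. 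The main obstacle is the identification of $U_\infty$: with the base point $Q_k$ moving, the pointwise blowup theorem (Corollary \ref{blowupsforeveryone}) alone does not pin down the tangent direction or the density of the limit. The resolution is to pair the joint continuity of the Monneau target (Proposition \ref{continuityofblowup}) with the uniform-in-base-point convergence of Lemma \ref{uniformplaneconvergence}.
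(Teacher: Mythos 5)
Your proof is correct, and it takes a genuinely different route from the paper's. The paper proves the boundary continuity of $\nabla u^\pm$ via a potential-theoretic representation: it constructs a bounded NTA subdomain $\Omega_B$, invokes the Jerison--Kenig representation formula for bounded harmonic functions with nontangential limits, computes those nontangential limits at each boundary point by blowup, and then passes to the full limit $X\to Q$ using the Poisson-kernel-type decay of $K(X,P)$ away from $Q$. You instead carry out a direct rescaling at the foot of the perpendicular $Q_k$ (a pseudo-blowup with moving base point), identify the limit of $U_k$ by combining the uniform Taylor-type bound \eqref{taylorexpansion} of Lemma \ref{uniformplaneconvergence} with the continuity of $Q\mapsto\tilde p^Q$ from Proposition \ref{continuityofblowup} (and one can even use \eqref{taylorexpansion} directly as an $L^\infty$ bound, making your detour through $L^2$ followed by a Lipschitz upgrade unnecessary), and finally apply interior gradient estimates to the harmonic $U_k$ on a fixed ball around $\nu(Q_0)$, where the rescaled domains eventually contain a uniform neighborhood. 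Both proofs ultimately draw on the same hard inputs (the Monneau machinery and the continuity results of Section \ref{sec: c1domain}); yours has the advantage of being more self-contained, avoiding the construction of $\Omega_B$ and the Jerison--Kenig kernel estimates, while the paper's is shorter once one accepts that representation machinery. One small point you should make explicit to close the argument: after showing $\lim_{X\to Q_0,\, X\in\Omega^+}\nabla u^+(X)$ exists and equals the continuous boundary field $\Theta^{n-1}(\omega^+,Q_0)\nu(Q_0)$, you should note that this limit plus the continuity of the boundary values (and the interior analyticity of $u^+$) is what gives a single continuous extension of $\nabla u^+$ to $\overline{\Omega^+}$, i.e.\ $u^+\in C^1(\overline{\Omega^+})$; you gesture at this but do not spell it out.
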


\begin{proof}
For $Q\in \partial \Omega$, let $\nu(Q)$ be the inward pointing normal to $\Omega$ at $Q$.  We will prove that $$\lim_{\stackrel{X\rightarrow Q}{X\in \Omega^+}} D_iu^+(X) = (\nu(Q)\cdot e_i)\Theta^{n-1}(\omega^+,Q), \forall i = 1,...,n.$$ The desired result follows from $\Theta^{n-1}(\omega^+,-), \nu(-) \in C(\partial \Omega)$ (Corollary \ref{continuityofdensity} and Proposition \ref{boundaryisc1}). The proof for $u^-$ is identical.

Pick $r$ small so that $B(Q,r) \cap \partial \Omega$ can be written as the graph of a $C^1$ function. Then construct a bounded NTA domain $\Omega_B \subset \Omega$ such that $\partial \Omega_B\cap \partial \Omega = B(Q,r)\cap \partial \Omega$ (see \cite{jerisonandkenig} Lemma 6.3 and \cite{kenigtoro} Lemma A.3.3). For $X_0\in \Omega_B$, let $\omega_B^{X_0}$ be the harmonic measure of $\Omega_B$ with a pole at $X_0$. By local Lipschitz continuity, $|D_iu^+| < C$ on $\Omega_B$ and, therefore, $D_iu^+$ has a non-tagential limit $g(P)$ for $\omega_B^{X_0}$-a.e. $P$ in $\partial \Omega_B$ (see Section 5 in \cite{jerisonandkenig}).  Furthermore, if $K(X,P) \colonequals \frac{d\omega_B^X}{d\omega_B^{X_0}}(P)$ we have the following representation (see \cite{jerisonandkenig} Corollary 5.12), $$D_iu^+(X) = \int_{\partial \Omega_B}g(P)K(X,P)d\omega_B^{X_0}(P).$$ 

Using blowup analysis, one computes $g(P) = (\nu(P)\cdot e_i)\Theta^{n-1}(\omega^+, P)$ for $P \in \partial \Omega_B\cap B(Q,r/2)$. As $g(P)$ is continuous on $B(Q,r/2)\cap \partial \Omega_B$, there is some $s < r/2$ such that $P\in B(Q, s)\cap \partial \Omega_B \Rightarrow |g(P) -g(Q)| < \varepsilon$. On the other hand, Jerison and Kenig (Lemma 4.15) proved that $\lim_{X\in \Omega_B, X\rightarrow Q} \sup_{P\in \partial \Omega_B\backslash B(Q,s)} K(X,P) = 0.$ This allows us to estimate, $$\lim_{\stackrel{X\rightarrow Q}{X\in \Omega^+}} |D_iu^+(X) - (\nu(Q)\cdot e_i)\Theta^{n-1}(\omega^+,Q)| = \lim_{\stackrel{X\rightarrow Q}{X\in \Omega_B}} |D_iu^+(X) - g(Q)| $$$$\leq \lim_{\stackrel{X\rightarrow Q}{X\in \Omega_B}}\int_{\partial \Omega_B\backslash B(Q,s)} K(X,P)|g(P)-g(Q)| d\omega_B^{X_0}(P) + \int_{\partial \Omega_B\cap B(Q,s)} K(X,P) |g(P)-g(Q)|d\omega_B^{X_0}(P)$$$$\leq  \lim_{\stackrel{X\rightarrow Q}{X\in \Omega_B}} C\omega_B^{X_0}(\partial \Omega_B\backslash B(Q,s))\sup_{P\notin B(Q,s)} K(X,P) + \varepsilon \omega_B^{X}(B(Q,s)) \leq \varepsilon.$$ The first equality follows from the fact that any sequence in $\Omega^+$ approaching $Q$ must, apart from finitely many terms, be contained in $\Omega_B$. The last line follows  first from $|g(P)|< C$ and then from the fact that $\omega_B^{X}$ is a probability measure for any $X\in \Omega_B$. 
\end{proof} 

\section{Initial H\"older regularity: $\partial \Omega$ is $C^{1,s}$}\label{sec: initialholderregularity} In this section we will prove that $\partial \Omega$ is locally the graph of a $C^{1,s}$ function for some $0 < s \leq \alpha$. Note that, in general, the best one can hope for is $s = \alpha$ (if $\partial \Omega$ is the graph of a $C^{1,\alpha}$ function then $\log(h) \in C^{0,\alpha}$). 

Here we will borrow heavily from the arguments of De Silva et al. \cite{silvaferrarisalsa}, who prove $C^{1,\gamma}$ regularity for a wide class of non-homogenous free boundary problems. We cannot immediately apply their results, as they assume a non-degeneracy in the free boundary condition that our problem does not have (see condition (H2) in Section 7 of \cite{silvaferrarisalsa}). It should also be noted that our main result in this section is not immediately implied by the remark at the end of Caffarelli's paper, \cite{caf}. Indeed, Caffarelli's free boundary condition also contains an {\it a priori} non-degeneracy condition (see condition (a) at the top of page 158 in \cite{caf}) which our problem lacks. 

\subsection{The Iterative Argument} In this section we shall state the main lemma and show how that lemma, through an iterative argument, implies our desired result. First we need two definitions.

\begin{defin}\label{freeboundaryproblem}
Let $g: \R^n \rightarrow \R$. Then $w \in C(B_1(0))$ is {\bf a solution to the free boundary problem associated to $g$} if:
\begin{itemize}
\item $w \in C^2(\{w > 0\}) \cap C^2(\{w < 0\})$
\item $w \in C^1(\overline{ \{w > 0\} }) \cap C^1(\overline{ \{w < 0\} })$
\item $w$ satisfies, in $B_1(0)$, the following: 
\begin{equation}\label{classicalsolution}
\begin{aligned}
\Delta w(x) &= 0, \; x\in \{w \neq 0\}\\
(w^+)_{\nu_x}(x)g(x) &= -(w^-)_{\nu_x}(x),\; x\in \{w = 0\}
\end{aligned}
\end{equation}
 where $\nu_x$ is the normal to $\{w=0\}$ at $x$.
\end{itemize}
\end{defin}

One observes that Corollary \ref{c1extension} implies that $u$ is a solution to the free boundary problem associated to $h$. We now need the notion of a ``two-plane solution".

\begin{defin}\label{twophasesolution}
Let $\gamma >0$ and $g:\R^n\rightarrow \R$. Then for any $x_0 \in B_1(0)$ we can define the {\bf two-plane solution associated to $g$ at $x_0$}: $$U_\gamma^{(x_0)}(t) \colonequals \gamma t^+ - g(x_0)\gamma t^-,\; t\in \R.$$  When no confusion is possible we drop the dependence on $x_0$. It should also be clear from context to which function $g$ our $U$ is associated. 
\end{defin}

The following remark, which follows immediately from Corollary  \ref{blowupsforeveryone} and \eqref{taylorexpansion}, elucidates the relationship between a two-plane solution and our function $u$.

\begin{rem}\label{convergencetotwoplane}
Let $x_0 \in \partial \Omega$. As $r\rightarrow 0$ it is true that  $$u_{r,x_0}(x) \colonequals \frac{u(rx + x_0)}{r}\rightarrow U^{(x_0)}_{\Theta^{n-1}(\omega^+, x_0)}(x\cdot \nu_{x_0})$$ uniformly on compacta. Here $U$ is the two-plane solution associated to $h$. Furthermore, the rate of this convergence is independent of $x_0\in K\cap \partial \Omega$ for $K$ compact. \end{rem}

Intuitively, the faster the rate of this convergence, the greater the regularity of $\partial \Omega$. This relationship motivates the following lemma (compare with \cite{silvaferrarisalsa}, Lemma 8.3),  which says roughly that if $u$ is close to a two-plane solution in a large ball, then $u$ is in fact even closer to a, possibly different, two-plane solution in a smaller ball. 

\begin{lem}\label{iterationstep}
Let $\infty > C_1, c_1 > 0$ and $\tilde{k} > 0$.  Let $v$ be a solution to a free boundary problem associated to $g$ such that $\inf_{x\in B_2(x_0)} g(x) \geq \tilde{k} > 0$ and such that $v(x_0) = 0$. Let $\varepsilon > 0, C_1 > \gamma > c_1, \nu \in \mathbb S^{n-1}$ and assume \begin{equation}\label{squeeze1}
U_\gamma^{(x_0)}(x\cdot \nu - \varepsilon) \leq v(x+x_0) \leq U_\gamma^{(x_0)}(x\cdot \nu + \varepsilon), \; x\in B_1(0).
\end{equation}
Also, assume that $\sup_{x,y \in B_1(x_0)}\frac{|g(x) - g(y)|}{|x-y|^\alpha} < \varepsilon^2$. 

Then there exists some $R_0 = R_0(C_1, c_1, n) > 0$ such that for all $r < R_0$ there is a $\tilde{\varepsilon} = \tilde{\varepsilon}(r, C_1, c_1, n) > 0$ so that if the $\varepsilon$ above satisfies $\varepsilon \leq\tilde{\varepsilon}$ then \begin{equation}\label{squeeze2}
U_{\gamma'}^{(x_0)}(x\cdot \nu' - r\frac{\varepsilon}{2}) \leq v(x+x_0) \leq U^{(x_0)}_{\gamma'}(x\cdot \nu' + r \frac{\varepsilon}{2}),\; x\in B_r(0),
\end{equation} where $|\nu'| = 1, |\nu' - \nu| \leq \tilde{C}\varepsilon$ and $|\gamma -\gamma'| \leq \tilde{C}\gamma \varepsilon$. Here $\tilde{C} = \tilde{C}(C_1, c_1, n) > 0$.
\end{lem}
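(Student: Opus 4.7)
The plan is to follow the compactness/improvement-of-flatness scheme of De Silva, Ferrari and Salsa, exploiting the quantitative non-degeneracy hypotheses $\gamma > c_1 > 0$ and $\inf_{B_2(x_0)} g \geq \tilde{k} > 0$ which supply exactly the ingredient that was missing in the earlier sections of the paper. After translating we may assume $x_0 = 0$, and by rescaling we may assume $g(0)$ is close to a reference value. Suppose the conclusion fails at some $r < R_0$. Then there exist sequences $v_k$, $g_k$, $\gamma_k \in (c_1, C_1)$, $\nu_k \in \mathbb{S}^{n-1}$ and $\varepsilon_k \downarrow 0$ such that $v_k$ solves the free boundary problem for $g_k$ with $v_k(0) = 0$, the H\"older seminorm of $g_k$ on $B_1$ is at most $\varepsilon_k^2$, the trapping \eqref{squeeze1} holds, but no choice of $\nu'$, $\gamma'$ satisfying the stated closeness bounds makes \eqref{squeeze2} hold at scale $r$.

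Next, I would introduce the linearized functions
$$\tilde{v}_k^+(x) \colonequals \frac{v_k^+(x) - \gamma_k (x\cdot \nu_k)^+}{\gamma_k \varepsilon_k}, \qquad \tilde{v}_k^-(x) \colonequals \frac{v_k^-(x) - g_k(0)\gamma_k (x\cdot \nu_k)^-}{g_k(0)\gamma_k \varepsilon_k},$$
each defined on the appropriate half of $B_1$ away from a $C\varepsilon_k$-neighborhood of $\{x\cdot \nu_k = 0\}$. The crucial compactness step is a partial Harnack inequality, analogous to Lemma 7.14 of \cite{silvaferrarisalsa}, which gives uniform H\"older bounds for $\tilde{v}_k^\pm$ up to the free boundary. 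Non-degeneracy on both sides (provided by $\gamma_k > c_1$ and $g_k \geq \tilde k$) is what lets the Harnack estimate propagate across the interface with uniform constants. After passing to subsequences, $\nu_k \to \nu$, $\gamma_k \to \gamma_\infty$, $g_k(0) \to g_\infty$, and $\tilde{v}_k^\pm$ converges locally uniformly to a pair $\tilde{v}^\pm$ on the two halves of $B_{1/2}$ cut by $\{x\cdot \nu = 0\}$.

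I would then identify the limiting problem. Since $\Delta v_k = 0$ off the free boundary and the perturbations we subtract are harmonic, each $\tilde{v}^\pm$ is harmonic in its half-space. Linearizing the free boundary condition $(v^+)_\nu g = -(v^-)_\nu$ around the two-plane solution, and using that $\operatorname{osc}_{B_1} g_k \lesssim \varepsilon_k^2$ so that the coefficient converges to the constant $g_\infty$, one obtains the transmission condition
$$\partial_\nu \tilde{v}^+ \;=\; g_\infty \, \partial_\nu \tilde{v}^- \qquad \text{on } \{x\cdot \nu = 0\}\cap B_{1/2}.$$
After the linear change of variables $w = \tilde{v}^+$ on one side and $w = g_\infty \tilde{v}^-$ on the other, this becomes a harmonic function across $\{x\cdot \nu = 0\}$, so interior $C^{1,\alpha}$ estimates apply: at the origin $\tilde{v}^\pm$ is approximated, up to error $Cr^{1+\alpha}$, by an affine function $a + b\cdot x$ with $|a|, |b|$ controlled.

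Finally, this linearization is transferred back to the prelimit. The affine correction $a+b\cdot x$ produces new parameters $\nu' = (\nu + \varepsilon_k b)/|\nu + \varepsilon_k b|$ and $\gamma' = \gamma_k(1 + \varepsilon_k a_+) $ (with an analogous rescaling on the negative side that uses $g_k(0) \approx g_\infty$) and one verifies
$$U_{\gamma'}(x\cdot\nu' - r\varepsilon_k/2) \leq v_k(x) \leq U_{\gamma'}(x\cdot\nu' + r\varepsilon_k/2), \qquad x\in B_r,$$
for $k$ large, once $r < R_0$ is chosen so that $Cr^{1+\alpha} < r/4$ and $\tilde{\varepsilon}(r, C_1, c_1, n)$ is small enough. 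This contradicts the assumed failure. The bounds $|\nu' - \nu_k| \leq \tilde C \varepsilon_k$ and $|\gamma' - \gamma_k| \leq \tilde C \gamma_k \varepsilon_k$ follow directly from the construction.

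The main obstacle is the partial Harnack step: one must show that the oscillation of $\tilde{v}_k^\pm$ cannot be too wild near the free boundary, and the usual argument requires carefully sliding barriers from one phase to the other across the free boundary. The $\varepsilon^2$-smallness of the H\"older seminorm of $g$ is what makes the inhomogeneity a lower-order perturbation so that the limiting transmission problem is the pure linear one. Everything else is essentially bookkeeping built on top of the DFS framework.
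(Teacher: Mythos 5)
Your proposal follows the same DFS-style compactness scheme as the paper: normalize the error $\tilde v_k$ by $\gamma_k\varepsilon_k$ (and $g_k(0)\gamma_k\varepsilon_k$ on the negative phase), use a Harnack-type estimate to get equicontinuity, pass to a limit satisfying a transmission problem, and then read off the improved flatness data $(\gamma',\nu')$ from the second-order Taylor coefficient of the limit. This is the structure the paper uses as well (via Lemma \ref{harnacklemma}, Lemma \ref{twosidedharnackinequality}, Corollary \ref{convergencecor}, and the estimate \eqref{estimateontransgrowth} from Theorem \ref{viscosityisclassical}). The parameter updates you write down, $\nu' = (\nu+\varepsilon_k b)/|\nu+\varepsilon_k b|$ and $\gamma' = \gamma_k(1+\varepsilon_k a)$, are exactly the paper's.

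There is one genuine gap. You assert that ``linearizing the free boundary condition $\dots$ one obtains the transmission condition'' for the limit, and then treat the limit as a classical solution to which interior $C^{1,\alpha}$ estimates apply. But the compactness you invoke yields only locally uniform ($C^{0,\chi}$) convergence of $\tilde v_k$, so the limit is \emph{a priori} just H\"older continuous, and the normal-derivative matching condition has no immediate meaning. This is precisely the technical heart of the lemma, and the paper spends most of the proof on it: it verifies that $\tilde w$ is a \emph{viscosity} solution of the transmission problem (Definition \ref{viscositytransmission}) by a barrier argument --- building the comparison functions $\Gamma_k$, $\phi_k$, $\psi_k$, showing $\psi_k$ is a strict subsolution, and applying the comparison Lemma \ref{comparisonlem} --- and only then upgrades to a classical solution via Theorem \ref{viscosityisclassical} so that the pointwise expansion \eqref{estimateontransgrowth} is available. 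You would need to supply this step (or an equivalent) for the argument to close; simply citing DFS will not do because, as the paper notes, DFS assume a non-degeneracy that does not hold here, which is the entire reason for Sections 4--6. Separately, with your normalization the correct transmission condition is that the normal derivative of the glued-together function is continuous across $\{x\cdot\nu=0\}$ (equivalently $\partial_\nu \tilde v^+ + \partial_\nu \tilde v^- = 0$); the extra factor $g_\infty$ in your version should not be there --- it cancels precisely because you already divided by $g_k(0)$ on the negative phase.
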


With this lemma we can prove H\"older regularity by way of an iterative argument. 

\begin{prop}\label{initialholderregularity}
Let $\Omega \subset \R^n$ be a 2-sided NTA domain with $\log(h) \in C^{0,\alpha}(\partial \Omega)$. 
\begin{itemize}
\item If $n =2$, then $\Omega$ is a $C^{1,s}$ domain for some $s > 0$. 
\item If $n \geq 3$, assume that either $\Omega$ is a $\delta$-Reifenberg flat domain for some $0 < \delta$ small enough or that $\Omega$ is a Lipschitz domain. Then $\Omega$ is a $C^{1,s}$ domain for some $s > 0$. \end{itemize} \end{prop}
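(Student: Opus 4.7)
The plan is to iterate Lemma \ref{iterationstep} at geometric scales around each boundary point, then translate the geometric convergence of the approximating half-planes into a uniform $C^{1,s}$ modulus for $\partial \Omega$.

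First I set up the uniform constants. Fix a compact $K \subset\subset \R^n$ meeting $\partial\Omega$. By Corollary \ref{uniformlowerbound} and Corollary \ref{upperboundondensity}, the densities $\gamma_{x_0} := \Theta^{n-1}(\omega^+, x_0)$ satisfy $c_1 \leq \gamma_{x_0} \leq C_1$ uniformly in $x_0 \in K \cap \partial\Omega$; continuity of $\log h$ on $K$ yields $\tilde k := \inf_K h > 0$. With these fixed constants, choose $r < R_0(C_1,c_1,n)$ from Lemma \ref{iterationstep}, small enough that also $r^{\alpha} < 1/4$, and fix $\tilde\varepsilon = \tilde\varepsilon(r,C_1,c_1,n)$. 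Set the target decay rate $s := \log 2/\log(1/r) > 0$ and the initial tolerance $\varepsilon_0 \in (0,\tilde\varepsilon)$.

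The base step establishes the initial squeeze \eqref{squeeze1} for $u$ at some uniform scale $r_\star$, independent of $x_0 \in K \cap \partial\Omega$. By Remark \ref{convergencetotwoplane}, the rescalings $u_{\rho,x_0}$ converge uniformly on compacta to the two-plane solution $U_{\gamma_{x_0}}^{(x_0)}(\cdot\,\nu_{x_0})$ with a rate independent of $x_0\in K\cap\partial\Omega$; combining this $L^{\infty}$ convergence with the uniform Hausdorff closeness of $\partial\Omega$ to its tangent hyperplane supplied by Lemma \ref{uniformplaneconvergence}, and using the uniform lower bound $\gamma_{x_0} \geq c_1 > 0$ to prevent the two-plane solution from degenerating, one obtains the geometric squeeze \eqref{squeeze1} on $B_{r_\star}(x_0)$ for all $x_0 \in K \cap \partial\Omega$, once $r_\star$ is chosen small. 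I also shrink $r_\star$ so that $r_\star^\alpha [h]_{C^{0,\alpha}(K')} < \varepsilon_0^2$ on a slight enlargement $K' \supset K$; this is the Hölder condition on $g = h$ required by Lemma \ref{iterationstep} after rescaling.

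I then iterate. Rescaling $u$ to unit scale on $B_{r_{k-1}}(x_0)$ and applying Lemma \ref{iterationstep} produces a new half-plane datum $(\gamma_k, \nu_k)$ with $|\nu_k - \nu_{k-1}| + |\gamma_k - \gamma_{k-1}|/\gamma_{k-1} \leq \tilde C \varepsilon_{k-1}$, and the squeeze at scale $r_k := r^k r_\star$ with tolerance $\varepsilon_k := 2^{-k}\varepsilon_0$. The condition $r^\alpha < 1/4$ guarantees that the rescaled Hölder hypothesis $r_k^\alpha [h]_{C^{0,\alpha}} < \varepsilon_k^2$ propagates through every step, and uniform bounds on $\gamma_k$ keep us within the hypotheses of the lemma for all $k$. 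Summing the geometric series shows $\nu_k \to \nu_\star(x_0)$, $\gamma_k \to \gamma_\star(x_0)$, with $|\nu_k - \nu_\star(x_0)| + |\gamma_k - \gamma_\star(x_0)| \lesssim 2^{-k}\varepsilon_0$.

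Finally, the squeeze at scale $r_k$ with normal within $C\varepsilon_k$ of $\nu_\star(x_0)$ places $\partial\Omega \cap B_{r_k}(x_0)$ in the $(C \varepsilon_k r_k)$-neighborhood of $x_0 + \nu_\star(x_0)^{\perp}$. Since $\varepsilon_k r_k \lesssim r_k^{1+s}$, this yields, uniformly for $x_0 \in K\cap\partial\Omega$ and all scales $\rho \in (0, r_\star]$, that $\beta(x_0,\rho) \lesssim \rho^{s}$ (with $\beta$ as in \eqref{definitionofbeta}); running the iteration simultaneously at two nearby base points also produces a Hölder modulus $|\nu_\star(x_0) - \nu_\star(x_0')| \lesssim |x_0 - x_0'|^{s}$. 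The standard passage from uniform polynomial $\beta$-number decay plus Hölder continuity of the approximating normals to a $C^{1,s}$ graph parametrization (as in Proposition 9.1 of \cite{davidkenigtoro}, applied exactly as in our Proposition \ref{boundaryisc1}) completes the proof.

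I expect the main obstacle to be the base step: bridging the mere $L^\infty$ convergence in Remark \ref{convergencetotwoplane} to the geometric two-sided squeeze required by Lemma \ref{iterationstep}. This is where uniform non-degeneracy (Corollary \ref{uniformlowerbound}), uniform Lipschitz control on $u$ (Section \ref{sec: u is lipschitz}), and uniform flatness of $\partial\Omega$ (Lemma \ref{uniformplaneconvergence}) must be combined carefully so that the initial $r_\star$ is truly independent of $x_0 \in K \cap \partial\Omega$.
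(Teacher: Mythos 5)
Your proposal is correct and follows essentially the same strategy as the paper: set uniform constants $c_1, C_1, \tilde k$ from Corollaries \ref{upperboundondensity} and \ref{uniformlowerbound}, establish the initial two-plane squeeze at a scale $r_\star$ uniform in the base point via Remark \ref{convergencetotwoplane}, iterate Lemma \ref{iterationstep} at scales $r^k r_\star$ with tolerances $2^{-k}\varepsilon_0$ (tracking that the $\gamma_k$ remain in $[c_1,C_1]$ and the rescaled H\"older bound on $h$ propagates because $r^\alpha < 1/4$), translate the resulting squeeze into the decay $\beta(x_0,t) \lesssim t^s$ with $s = -\log_r 2$, and invoke David--Kenig--Toro. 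One small remark: your appeal to Lemma \ref{uniformplaneconvergence} in the base step is not actually needed, since the $L^\infty$ convergence of $u_{\rho,x_0}$ to the two-plane solution in Remark \ref{convergencetotwoplane}, together with the lower bound $\gamma_{x_0}\geq c_1$ and the lower bound on $h$, already implies the geometric squeeze \eqref{squeeze1} directly (the increment $U_\gamma(x_n+\varepsilon)-U_\gamma(x_n)$ is at least $c_1\min\{1,\inf h\}\,\varepsilon$); this is exactly how the paper handles it.
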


\begin{proof}[Proof of Proposition \ref{initialholderregularity} assuming Lemma \ref{iterationstep}] Without loss of generality let $0 \in \partial \Omega$ and $e_n$ be the inward pointing normal to $\Omega$ at $x_0\in B_1(0)\cap \partial \Omega$. We will show that $\beta(x_0, t) \leq C''t^s$ for some $s > 0$ and some $C'' > 0$ independent of $t > 0, x_0 \in \partial \Omega \cap B_1(0)$. A theorem of David, Kenig and Toro (\cite{davidkenigtoro}, Proposition 9.1) then implies that $\partial \Omega$ is locally the graph of a $C^{1,s}$ function.  

Set $\gamma = \Theta^{n-1}(\omega^+, x_0)$ and let $$C_1 \colonequals 2\sup_{z \in \partial \Omega \cap B_4(0)} \Theta^{n-1}(\omega^+, z),\: c_1 \colonequals \frac{1}{2} \inf_{z \in \partial \Omega \cap B_4(0)} \Theta^{n-1}(\omega^+, z).$$ By Corollary \ref{uniformlowerbound} and the work of Section \ref{sec: u is lipschitz} we have $\infty > C_1 \geq c_1 > 0.$

 Lemma \ref{iterationstep} gives us an $R_0$. Pick $0 < \overline{r} \leq R_0$ small enough so that $\overline{r}^\alpha < \frac{1}{4}$. We then get a $\tilde{\varepsilon} > 0$ depending on $\overline{r}$. Pick $\varepsilon < \tilde{\varepsilon}$ such that $$1/2 \leq \left(\prod_{k=0}^\infty (1-\tilde{C}\varepsilon/2^k)\right) < \left(\prod_{k=0}^\infty (1+\tilde{C}\varepsilon/2^k)\right) \leq 2$$ where $\tilde{C}$ is the constant from Lemma \ref{iterationstep}.

Recall Remark \ref{convergencetotwoplane}, that $u_{\rho, x_0}(x) \rightarrow U_{\gamma}(x_n)$ for $x\in B_1$ as $\rho \downarrow 0$. Thus, for small enough $\rho$, we have $$\|u_{\rho, x_0}(x) - U_{\gamma}(x_n)\|_{L^\infty(B_1)} < K\varepsilon,$$ where $K \leq \min\{c_1, \inf_{x\in B_1}|h(x)|c_1\}$. This implies $$U_{\gamma}(x_n - \varepsilon) \leq u_{\rho, x_0}(x) \leq U_{\gamma}(x_n + \varepsilon), x\in B_1(0).$$ $u_{\rho, x_0}$ is a solution to the free boundary problem associated to $g(x) = h(\rho x+ x_0)$. In particular, if $\rho$ is small enough such that $\rho^{\alpha}\|h\|_{C^{0,\alpha}} < \varepsilon^2$ then $g$ satisfies the growth and lower bound assumptions of Lemma \ref{iterationstep}.
 
 If $u^{0}(x) \colonequals u_{\rho, x_0}(x)$, then we can apply Lemma \ref{iterationstep} to $u^{0}$ in direction $e_n$ with $\gamma, C_1, c_1, \overline{r}, \varepsilon$ as above. This gives us a $\nu_1 \in \mathbb S^{n-1}$ and a $\gamma_1 > 0$ such that $$U_{\gamma_1}(x\cdot \nu_1 - \overline{r}\frac{\varepsilon}{2}) \leq u^{0}(x) \leq U_{\gamma_1}(x\cdot \nu_1 + \overline{r} \frac{\varepsilon}{2}),\; x\in B_{\overline{r}}(0).$$ Write $x = \overline{r}y$ and divide the above equation by $\overline{r}$ to obtain, $$U_{\gamma_1}(y\cdot \nu_1 - \frac{\varepsilon}{2}) \leq u^{0}(\overline{r}y)/\overline{r} \leq U_{\gamma_1}(y \cdot \nu_1 +\frac{\varepsilon}{2}),\; y\in B_1(0).$$ Let $u^1(z) \colonequals u^{0}(\overline{r}z)/\overline{r}$ so that $$U_{\gamma_1}(y\cdot \nu_1-\varepsilon/2) \leq u^1(y) \leq U_{\gamma_1}(y\cdot \nu_1+\varepsilon/2),\; y\in B_1(0)$$
Apply Lemma \ref{iterationstep} to $u^1$ in direction $\nu_1$ with $C_1, c_1, \gamma_1,\varepsilon/2, \overline{r}$ and iterate. 
 
 In this way, we create a sequence of $u^k(y), \theta_k, \gamma_k, \nu_k$ such that $$U_{\gamma_k}(y\cdot \nu_k - \varepsilon/2^k) \leq u^k(y) \leq U_{\gamma_k}(y\cdot \nu_k + \varepsilon/2^k),\; y\in B_1(0)$$ and $|\nu_k - \nu_{k+1}| <\tilde{C}\varepsilon/2^k$. We must prove that it is valid to apply Lemma \ref{iterationstep} at each step. 
 
 By Lemma \ref{iterationstep} and construction, $$c_1 \leq \frac{1}{2}\gamma \leq \prod_{i=0}^{k-1}(1-\tilde{C}\varepsilon/2^k)\gamma \leq \gamma_k  \leq \prod_{i=0}^{k-1}(1+\tilde{C}\varepsilon/2^k)\gamma \leq 2\gamma \leq C_1,$$ so $\gamma_k$ is always in the acceptable range for another application of Lemma \ref{iterationstep}. Also in the $k$th step we apply the lemma with $\varepsilon/2^k < \varepsilon < \tilde{\varepsilon}$ and the same $\overline{r}$.
 
 Finally, in the $k$th step we have $u^k(y) = u_{\rho \overline{r}^k, x_0}(y)$. Thus we need to make sure that $(\rho \overline{r}^k)^\alpha \|h\|_{C^{0,\alpha}} < (\varepsilon/2^k)^2.$ By construction, $\rho^\alpha \|h\|_{C^{0,\alpha}}< \varepsilon^2$ and $\overline{r}^{k\alpha} \leq \frac{1}{4}^k$ and so the conditions of Lemma \ref{iterationstep} are satisfied for each $k$.  
 
After $k$ steps, $$U_{\gamma_k}(y\cdot \nu_k - \varepsilon/2^k) \leq u^k(y) \leq U_{\gamma_k}(y\cdot \nu_k + \varepsilon/2^k),\; y\in B_1(0)\Rightarrow$$$$U_{\gamma_k}(x\cdot \nu_k -\rho\overline{r}^k \varepsilon/2^k) \leq u(x+x_0) \leq U_{\gamma_k}(x\cdot \nu_k+ \rho\overline{r}^k\varepsilon/2^k),\; x\in B_{\rho\overline{r}^k}(0).$$ If $x\in B_{\rho\overline{r}^k}(0)$ is taken such that $x+x_0 \in \partial \Omega$ then the above equation implies $$x\cdot \nu_k - \rho\overline{r}^k\varepsilon/2^k < 0 < x\cdot\nu_k + \rho\overline{r}^k\varepsilon/2^k\Rightarrow$$$$|x\cdot \nu_k| \leq \rho\overline{r}^k\varepsilon/2^k \Rightarrow \beta(x_0, \rho \overline{r}^k) \leq \varepsilon/2^k.$$

If $s \colonequals -\log_{\overline{r}}(2) > 0$, we have shown $\beta(x_0, \rho\overline{r}^k) \leq \frac{\varepsilon}{\rho^s}(\rho\overline{r}^k)^s \leq C'(\rho\overline{r}^k)^s$ (Remark \ref{convergencetotwoplane} implies that we can we can take $\rho$ uniformly in $x_0\in B_1(0)$). If $t$ is such that $\rho\overline{r}^{k+1} < t \leq \rho\overline{r}^k$ we can estimate $$\beta(x_0, t) < \frac{\rho\overline{r}^k}{t} \beta(x_0, \rho\overline{r}^k) <C'\frac{\overline{\rho r}^k}{t} (\rho\overline{r}^k)^s = C'\frac{\overline{\rho r}^k}{t} t^s \left(\frac{\overline{\rho r}^k}{t}\right)^s \leq \frac{C'}{\overline{r}^{1+s}} t^s \equiv C'' t^s,$$ where we used that $\frac{\rho\overline{r}^k}{t} < \frac{1}{\overline{r}}$.
\end{proof}

It is worthwhile to note that the condition $\overline{r}^\alpha < 1/4$ implies $s = -\log_{\overline{r}}(2) < \alpha/2$. So this argument does not give optimal H\"older regularity.

\subsection{Harnack Inequalities} It remains to prove Lemma \ref{iterationstep}. We first define a subsolution to the free boundary problem (see Definition \ref{freeboundaryproblem}).

\begin{defin}\label{subsoltn}
Let $\Op$ be an open set in $\R^n$ and $g:\R^n\rightarrow \R$. We say that $z \in C(\overline{\Op})$ is a {\bf strict-subsolution to the free boundary problem associated with $g$} in $\Op$ if:
\begin{itemize}
\item $\{z =0\}$ is locally the graph of a $C^2$ function.
\item $z \in C^1(\overline{ \{z > 0\}\cap \Op}) \cap C^1(\overline{ \{z < 0\}\cap \Op})$. 
\item On the set $\{z \neq 0\}$ we have $\Delta z > 0$.
\item For $x_0 \in \{z = 0\}$ we have $$g(x_0)(z^+)_{\nu_{x_0}}(x_0) + (z^-)_{\nu_{x_0}}(x_0) > 0,$$ where $\nu_{x_0}$ is the inward pointing normal at $x_0$ to $\{z > 0\}$. 
\end{itemize}

We define a strict supersolution analogously. 
\end{defin}

 With this definition we need a comparison principle (note that this comparison principle can also be taken to be the definition of a sub/super solution, see e.g. \cite{silvaferrarisalsa}).

\begin{lem}\label{comparisonlem}[Compare to \cite{cafandsalsa} Lemma 2.1, \cite{silvaferrarisalsa} Definition 7.2]
Let $\Op$ be an open set in $\R^n$. Let $w, z$ be a solution and strict subsolution respectively to the free boundary problem associated to a positive $g$ in $\Op$. If  $w\geq z$ in $\overline{\Op}$ then $w > z$ in $\Op$. 

The analogous statement holds for supersolutions.
\end{lem}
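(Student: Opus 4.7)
I would argue by contradiction, splitting on whether the hypothetical touching point $x_0 \in \Op$ with $w(x_0) = z(x_0)$ lies on the free boundary $\{z = 0\}$. If $z(x_0) > 0$ (or $< 0$), then on a small neighborhood $V$ of $x_0$ one has $w > 0$ (resp. $< 0$), so $\Delta w = 0$ and $\Delta z > 0$ on $V$, and $w - z \ge 0$ is strictly superharmonic with an interior zero at $x_0$, contradicting the strong minimum principle. Thus $z(x_0) = w(x_0) = 0$ and the question reduces to a two-sided Hopf-lemma argument at the free boundary.

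The plan in the free boundary case is to compare the one-sided normal derivatives of $w$ and $z$ at $x_0$ and feed them into the free boundary conditions. Since $w \ge z$, the inclusions $\{z > 0\} \subseteq \{w > 0\}$ and $\{w < 0\} \subseteq \{z < 0\}$ hold locally, so the $C^2$ surface $\{z = 0\}$ supplies an interior tangent ball $B^+ \subseteq \{z > 0\} \subseteq \{w > 0\}$ at $x_0$. On $B^+$, $w - z \ge 0$ is strictly superharmonic (as $\Delta w = 0$, $\Delta z > 0$) and vanishes at $x_0 \in \partial B^+$; Hopf's lemma therefore yields $(w^+)_\nu(x_0) > (z^+)_\nu(x_0) \ge 0$, with $\nu$ the inward normal to $\{z > 0\}$ at $x_0$. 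In particular $\nabla w|_+ \neq 0$, so via the implicit function theorem (together with the orientation forced by $\{w > 0\} \supseteq \{z > 0\}$) the set $\{w = 0\}$ is a $C^1$ hypersurface near $x_0$ with normal $\nu$. The free boundary identity for $w$ then gives $(w^-)_\nu(x_0) = -g(x_0)(w^+)_\nu(x_0) < 0$, so $\{w = 0\}$ is non-degenerate from the $\{w < 0\}$ side as well and admits an interior tangent ball $B' \subseteq \{w < 0\}$ at $x_0$. On $B'$, $z \le w < 0$, so $B' \subseteq \{z < 0\}$; the same Hopf argument applied to $w - z \ge 0$ on $B'$, after passing back through $w = -w^-$, $z = -z^-$, gives $(w^-)_\nu(x_0) > (z^-)_\nu(x_0)$. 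Multiplying the positive-side inequality by $g(x_0) > 0$ and adding,
\begin{equation*}
0 \;=\; g(x_0)(w^+)_\nu(x_0) + (w^-)_\nu(x_0) \;>\; g(x_0)(z^+)_\nu(x_0) + (z^-)_\nu(x_0) \;>\; 0,
\end{equation*}
where the equality is the free boundary condition for the solution $w$ and the final strict inequality is the defining property of a strict subsolution. This is the desired contradiction, and the supersolution case follows by flipping signs throughout.

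The main obstacle is the negative-side Hopf inequality, which needs an interior tangent ball $B' \subseteq \{w < 0\}$ at $x_0$; this is not directly guaranteed by the definition of a solution. The positive-side Hopf estimate unlocks it: it forces $(w^+)_\nu > 0$, the free boundary condition transfers this non-degeneracy to the negative side, and the implicit function theorem delivers the required $C^1$ regularity of $\{w = 0\}$ near $x_0$.
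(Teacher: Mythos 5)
Your case split on whether the touching point lies on $\{z=0\}$ mirrors the paper's, and your off-boundary cases are correct and match the paper's Cases~2--3. The free-boundary case, however, has a genuine gap and is also more complicated than needed. The gap: your negative-side Hopf step requires an interior tangent ball $B' \subset \{w < 0\}$ at $x_0$, which you try to produce from the implicit function theorem and $C^1$ regularity of $\{w = 0\}$. But $C^1$ regularity of a hypersurface does \emph{not} furnish interior tangent balls --- that requires a Dini modulus on the unit normal, typically $C^{1,1}$. (The implicit function theorem step is also shaky as stated: $w$ is only $C^1$ from each phase separately, with a gradient that may jump across $\{w = 0\}$, so it is not a $C^1$ function on a full neighborhood of $x_0$.) The positive-side tangent ball is fine, since it comes from the $C^2$ regularity of $\{z = 0\}$ built into the definition of a strict subsolution.

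More to the point, Hopf's lemma is unnecessary here, which is how the paper sidesteps the tangent-ball issue entirely. Since $z - w$ attains a local maximum (equal to $0$) at $\tilde{x}$, the one-sided normal derivatives satisfy $(z^+ - w^+)_\nu \le 0$ and $(z^- - w^-)_\nu \le 0$ with no Hopf and no tangent ball --- just the definition of a one-sided derivative at a maximum point, using the shared normal coming from the positive-side tangent ball. Then the free boundary condition for $w$ (an \emph{equality}) eliminates the $w$-terms:
\begin{equation*}
0 \;\geq\; g(\tilde{x})(z^+ - w^+)_\nu + (z^- - w^-)_\nu \;=\; g(\tilde{x})(z^+)_\nu + (z^-)_\nu \;>\; 0,
\end{equation*}
where the final strict inequality is exactly the \emph{strict} subsolution hypothesis. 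The strictness you are trying to manufacture with Hopf is already supplied by the subsolution condition. Your argument would also close if you kept the positive-side Hopf estimate and replaced the problematic negative-side Hopf step with the non-strict bound $(w^-)_\nu \ge (z^-)_\nu$ from the local maximum, but dropping Hopf entirely is cleaner.
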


\begin{proof}
We proceed by contradiction and let $\tilde{x}$ be a touching point. There are three cases:

\medskip

{\bf Case 1:} $\tilde{x} \in \{z=0\}$.  $\{z=0\}$ is locally the graph of a $C^2$ function so there is a tangent ball $B \subset \{z  >0\}$ with $B\cap \{z =0\} = \tilde{x}$. Since $\{z > 0\} \subseteq \{w > 0\}$ we have $B \cap \{w = 0\} = \tilde{x}$ and $B \subset \{w > 0\}$. As such $\{z=0\}, \{w = 0\}$ share a normal vector $\nu$ at $\tilde{x}$.

 Since $w \geq z, z\neq w$ we have that $z-w$ attains a local maximum at $\tilde{x}$. Thus $(z^+-w^+)_\nu \leq 0$ and $(-z^-+w^-)_{-\nu} = (z^- - w^-)_\nu \leq 0$. We then have $0 \geq g(\tilde{x})(z^+-w^+)_\nu + (z^--w^-)_\nu = g(\tilde{x})(z^+)_\nu + (z^-)_\nu > 0$ a contradiction.
 
 \medskip

{\bf Case 2:} $\tilde{x} \in \{z > 0\}$. As $\{z > 0\} \subseteq \{w >0\}$, both $-w,z$ are subharmonic on $\{z > 0\}$. So $z-w$ cannot attain a local maximum on $\{z > 0\}$ which implies $w > z$ on $\{z > 0\} \cap \Op$.  

\medskip

{\bf Case 3:} $\tilde{x} \in \{z < 0\}$. In this case $\tilde{x} \in \{w < 0\}$.  As $\{w < 0\} \subseteq \{z < 0\}$, we have $-w, z$ are both subharmonic on $\{w < 0\}$. We can then argue as in {\bf Case 2}.  
\end{proof}

With this comparison lemma we can prove a ``one-sided" Harnack type inequality. 

\begin{lem}\label{harnacklemma}[Compare with \cite{silvaferrarisalsa}, Lemmas 4.3 and 8.1]
Let $w$ be a solution to the free boundary problem associated to a positive continuous function $g$ on $B_1(0)$ (see Definition \ref{freeboundaryproblem}). Let $\tilde{k} > 0$ and assume $\inf_{x\in B_1(0)} g(x) \geq \tilde{k}$. Also assume  $w$ satisfies $$w(x) \geq U^{(0)}_\gamma(x\cdot \nu),\; x\in B_1(0)$$ (where $\nu \in \mathbb S^{n-1}$ and $\gamma > 0$) and that at $\overline{x} = \frac{1}{5}\nu$ \begin{equation}\label{inequalitygap}w(\overline{x}) \geq U^{(0)}_\gamma(1/5 + \varepsilon).\end{equation} Finally, assume that $\sup_{x\in B_1} |g(0) - g(x)| \leq 10 \varepsilon^2$. 

 Then there exists $\overline{\varepsilon}> 0$ and $0 < c < 1$ (which depend only on the dimension and $k$), such that if the above $\varepsilon < \overline{\varepsilon}$ we can conclude $$w(x) \geq U^{(0)}_\gamma(x\cdot \nu + c\varepsilon),\; x\in \overline{B}_{1/2}(0).$$ 

Analogously, if $w(x) \leq U_\gamma(x\cdot \nu), \; x\in B_1$ and $w(\overline{x}) \leq U_\gamma(1/5-\varepsilon)$ then $w(x) \leq U_\gamma(x\cdot \nu - c\varepsilon)$ in $\overline{B}_{1/2}(0)$. 
\end{lem}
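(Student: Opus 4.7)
The proof adapts the Harnack-improvement scheme of Caffarelli, refined in the non-homogeneous setting by De Silva, Ferrari and Salsa. I treat only the subsolution assertion; the supersolution case is symmetric.

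\emph{Bulk gain via Harnack.} On $\{x \cdot \nu > 0\}$ the hypothesis $w \geq U_\gamma^{(0)}(x \cdot \nu)$ forces $w \geq \gamma (x \cdot \nu) > 0$, so $w$ is harmonic there and $h(x) \colonequals w(x) - \gamma(x \cdot \nu)$ is harmonic and nonnegative on $\{x \cdot \nu > 0\} \cap B_1$. The improvement $w(\overline{x}) \geq U_\gamma^{(0)}(1/5 + \varepsilon)$ reads $h(\overline{x}) \geq \gamma \varepsilon$; since $B_{1/10}(\overline{x}) \subset \{x \cdot \nu > 1/10\}$, the interior Harnack inequality yields a dimensional $c_0 > 0$ with $h \geq c_0 \gamma \varepsilon$ on $B_{1/20}(\overline{x})$.

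\emph{Sliding barrier.} Set $\Op \colonequals B_{3/4}(0) \setminus \overline{B_{1/20}(\overline{x})}$ and pick a $C^2$ function $\phi : \overline{B_{3/4}(0)} \to [0, 1]$ with $\phi = 0$ on $\partial B_{3/4}$, $\phi \geq 1$ on $\overline{B_{1/20}(\overline{x})}$, $\Delta \phi \geq \delta_0 > 0$ throughout $\Op$, and $|\nabla \phi|$ uniformly bounded (produced, e.g., by solving $\Delta \phi = 1$ on $\Op$ with suitable boundary data and extending by a constant across $\overline{B_{1/20}(\overline{x})}$). Fix $K_2 \colonequals 20$, $K_1 \colonequals K_2/\tilde k$, and for $t \in [0, 1]$ set
\[
f_t(x) \colonequals x \cdot \nu + t c_0 \varepsilon \phi(x) - K_1 \varepsilon^2, \qquad v_t(x) \colonequals \gamma f_t(x)^+ - (g(0) - K_2 \varepsilon^2) \gamma f_t(x)^-.
\]
For $\overline{\varepsilon}$ small, $v_t$ is a \emph{strict} subsolution of the FBP associated to $g$ on $\Op$: on each phase $\Delta v_t$ equals a positive multiple of $\Delta \phi$; on $\{f_t = 0\}$ the tilted coefficient contributes $+K_2 \varepsilon^2 \gamma |\nabla f_t|$ to the free-boundary balance, overpowering the indeterminate $\pm 10 \varepsilon^2 \gamma |\nabla f_t|$ coming from $g(x_0) - g(0)$. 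The shift $K_1 \varepsilon^2$ is present so that $v_0 \leq U_\gamma^{(0)}(x \cdot \nu) \leq w$ on $\overline{\Op}$; the delicate point, a short computation in the negative phase, is precisely what forces $K_1 \geq K_2/\tilde k$.

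\emph{Continuity and conclusion.} The set $I \colonequals \{t \in [0, 1] : v_t \leq w \text{ on } \overline{\Op}\}$ contains $0$ and is closed by continuity of $v_t$ in $t$. It is open at any $t_0 \in I$: Lemma \ref{comparisonlem} upgrades $v_{t_0} \leq w$ to strict interior separation on $\Op$, while boundary domination $v_t \leq w$ on $\partial \Op$ holds uniformly in $t$ ($\phi = 0$ on $\partial B_{3/4}$ gives $v_t \leq v_0 \leq w$; on $\partial B_{1/20}(\overline{x})$, $\phi \leq 1$ forces $f_t \leq x \cdot \nu + c_0 \varepsilon$, so Step 1 yields $v_t \leq \gamma(x \cdot \nu) + c_0 \gamma \varepsilon \leq w$). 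Thus $I = [0, 1]$, i.e.\ $v_1 \leq w$ on $\overline{\Op}$. Since $v_1 \geq U_\gamma^{(0)}(f_1)$ and $\phi \geq \phi_\ast > 0$ uniformly on $\overline{B_{1/2}(0)}$, shrinking $\overline{\varepsilon}$ further to absorb the $K_1 \varepsilon^2$ term produces $w \geq U_\gamma^{(0)}(x \cdot \nu + c \varepsilon)$ on $\overline{B_{1/2}(0)}$ with $c \colonequals c_0 \phi_\ast/2$. The main obstacle is the simultaneous tuning of the two $\varepsilon^2$ correctors $K_1$ and $K_2$: the $\varepsilon$--vs--$\varepsilon^2$ gap in the hypothesis is exactly what allows strict subsolution-ness at the free boundary to coexist with $v_0 \leq w$ at the initial parameter.
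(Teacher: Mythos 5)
Your proof is correct in outline but follows a genuinely different route from the paper's, so let me compare them. Both proofs share the Harnack-plus-sliding-barrier template, but you encode the strictness differently. The paper first proves a ``gap-widening'' claim---$w \geq (1+c_1\varepsilon)\gamma x_n^+ - g(0)\gamma x_n^-$ on $\overline{B}_{19/20}$---using interior Harnack plus a boundary Harnack comparison, and then builds the barrier with a boosted positive-phase slope $(1+c_1\varepsilon)\gamma$; it is this $c_1\varepsilon$ that defeats the $\pm 10\varepsilon^2$ error from $g(x_0)-g(0)$ in the free boundary condition. You instead skip the boundary Harnack step entirely and keep the positive-phase slope at $\gamma$, putting the strictness in the negative-phase coefficient $(g(0)-K_2\varepsilon^2)\gamma$, with the $-K_1\varepsilon^2$ shift ensuring $v_0 < U_\gamma(x\cdot\nu) \leq w$ at the initial parameter. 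This is a valid alternative and saves the boundary Harnack argument, at the cost of managing two $\varepsilon^2$ correctors simultaneously. The paper uses a $t^*=\sup\{t:v_t\leq w\}$ contradiction with a touching-point case analysis; you use a connectedness argument on $I$---structurally equivalent, equally fine.

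Two points you should tighten. First, your annulus $\Op = B_{3/4}(0)\setminus\overline{B}_{1/20}(\overline{x})$ is not concentric, and the sketched construction of $\phi$ (solving $\Delta\phi = 1$ with Dirichlet data $0$ outside, $1$ inside) does \emph{not} automatically produce a nonnegative function: a solution of $\Delta\phi=1$ can dip below its minimum boundary value. The clean fix is exactly the paper's choice: center the annulus at $\overline{x}$ (note $B_{3/4}(\overline{x})\supset \overline{B}_{1/2}(0)$) and take the explicit radial $\phi(x) = c\bigl(|x-\overline{x}|^{-n} - (3/4)^{-n}\bigr)$, which is subharmonic, vanishes on the outer sphere, equals $1$ on the inner sphere, and is bounded below by a positive constant on $\overline{B}_{1/2}(0)$. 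Second, your openness argument invokes Lemma \ref{comparisonlem} at every $t_0 \in I$, but $v_0$ is \emph{not} a strict subsolution ($\Delta v_0 = 0$), so the comparison lemma does not apply at $t_0=0$; you need a separate argument there, namely that $w - v_0 > 0$ pointwise on compact $\overline{\Op}$ thanks to the $K_1\varepsilon^2$ shift, hence bounded below. Both are local repairs; the strategy is sound.
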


\begin{proof}
For ease of notation we will drop the dependence of $U$ on $\gamma, 0$ and let $\nu = e_n$. We prove the inequality from below; the inequality from above, and the result for general $\nu$, is proven similarly. Our first step is to widen the gap between $w$ and $U$:

\medskip

{\bf Claim:} There exists a universal $c_1 > 0$ such that $w(x) \geq (1+c_1\varepsilon)\gamma x_n^+ - g(0)\gamma x_n^-$ for all $x\in \overline{B}_{19/20}(0)$ and for universal $c_1 > 0$.

\medskip

{\it Proof of Claim:} In $\overline{B}_{1/20}(\overline{x})$ there is a universal constant $c_0 > 0$ such that $w(x)-U_\gamma(x) \geq c_0\gamma \varepsilon\geq c_0\gamma \varepsilon x_n$ by the Harnack inequality and \eqref{inequalitygap}. 

Define $\Op = (B_1 \cap \{x_n > 0\})\backslash \overline{B}_{1/20}(\overline{x})$ and let $\phi$ be the harmonic function in $\Op$ such that $\phi = 0$ on $\partial (B_1 \cap \{x_n > 0\})$ and $\phi = 1$ on $\partial B_{1/20}(\overline{x})$. 

We have $$w(x) - \gamma x_n \geq 0  = \gamma c_0\phi(x)\varepsilon/2, x\in \partial (B_1\cap \{x_n > 0\}).$$ Also, note $$w(x) -\gamma x_n \geq  c_0\gamma \varepsilon  \geq \gamma c_0 \varepsilon \phi(x)/2, x\in \partial B_{1/20}(\overline{x}).$$ As $w-\gamma x_n$ and $\gamma c_0 \varepsilon \phi(x)/2$ are both harmonic on $\Op$ we have that $w-\gamma x_n \geq \gamma c_0 \varepsilon \phi(x)/2$ on all of $\Op$.  Finally, by the boundary Harnack principle there is a $\tilde{c} > 0$ such that $\phi \geq \tilde{c}x_n$ on $\overline{\Op}\cap B_{19/20}$.  Therefore, $c_1 = \min\{c_0, c_0\tilde{c}/2, 5/2\}$ is such that $w-\gamma x_n^+ \geq \gamma \varepsilon c_1 x_n^+$ on $\overline{B}_{19/20}$, proving the claim.

\medskip

Recall $w(\overline{x})-U(\overline{x}_n) \geq \gamma \varepsilon > 0$. Thus $w(\overline{x}) - (1+c_1\varepsilon)\gamma(\overline{x}_n)^+ \geq \gamma\varepsilon - c_1 \gamma \varepsilon/5 \geq \gamma \varepsilon/2$.  The Harnack inequality tells us that $$w(x)- (1+c_1\varepsilon)\gamma (x_n)^+ \geq c' \varepsilon \gamma,\; x\in \overline{B}_{1/20}(\overline{x}),$$ for $c'$ universal depending on dimension. If $c_2$ is small enough that $(1+c_1 \varepsilon)c_2\leq c'$, then
\begin{equation}\label{insideinequality}w(x)- (1+c_1\varepsilon)\gamma (x_n + c_2\varepsilon)^+ \geq 0,\; x\in \overline{B}_{1/20}(\overline{x}).\end{equation} 

 Now we create a strict subsolution in the annulus $$A\colonequals B_{3/4}(\overline{x})\backslash \overline{B}_{1/20}(\overline{x})$$ and then use this subsolution to transfer the gap in \eqref{insideinequality} to a neighborhood of $0$. 

Let $$\psi(x) \colonequals 1 - c(|x-\overline{x}|^{-n} - (3/4)^{-n}),\; x\in A,$$ where $c$ is such that $\psi = 0$ on $\partial B_{1/20}(\overline{x})$. Then $0 \leq \psi \leq 1$ and $-\Delta \psi \geq k(n) > 0$ in $A$. We can extend $\psi \equiv 0$ on $B_{1/20}(\overline{x})$. 

For $t\geq 0$ we write \begin{equation}\label{familyofsubsolutions}v_t(x) \colonequals (1+c_1\varepsilon)\gamma(x_n - \varepsilon c_2\psi(x) + t\varepsilon)^+ - g(0)\gamma(x_n - \varepsilon c_2\psi(x) + t\varepsilon)^-,\; x\in \overline{B}_{3/4}(\overline{x}).\end{equation} We will prove later that this is a family of strict subsolutions.

By the claim, $v_0(x) \leq (1+c_1\varepsilon)\gamma x_n^+ - g(0)\gamma x_n^- \leq w(x)$ for $x\in \overline{B}_{3/4}(\overline{x})$. So we can define $t^* = \sup \{t \mid v_t(x) \leq w(x),\; \forall x\in \overline{B}_{3/4}(\overline{x})\}$. If $t^* \geq c_2$ we get $$w(x) \geq v_{c_2}(x) \geq U_\gamma(x_n - \varepsilon c_2\psi + c_2\varepsilon) \geq U_\gamma(x_n + c\varepsilon), \; x\in B_{1/2}(0)$$ where $c \colonequals c_2(1-\sup_{x\in B_{1/2}} \psi)$. This is the desired result. 

Assume, to obtain a contradiction, $t^* < c_2$.  There must be some point $\tilde{x} \in \overline{B}_{3/4}(\overline{x})$ such that $v_{t^*}(\tilde{x}) = w(\tilde{x})$ (and everywhere else in $\overline{B}_{3/4}(\overline{x})$ we have $v_{t^*}(x) \leq w(x)$) .

\medskip

{\bf Case 1:} $\tilde{x} \in \partial B_{3/4}(\overline{x})$. As $\psi(\tilde{x})= 1$, $$v_{t^*}(\tilde{x}) = (1+c_1\varepsilon)\gamma(\tilde{x}_n + (t^*-c_2)\varepsilon)^+ - g(0)\gamma(\tilde{x}_n + (t^*-c_2)\varepsilon)^-$$$$ < (1+c_1\varepsilon)\gamma (\tilde{x}_n)^+ - g(0)\gamma (\tilde{x}_n)^-.$$ Note, $\overline{B}_{3/4}(\overline{x}) \subset \overline{B}_{19/20}$, so the claim implies $w(\tilde{x}) \geq (1+c_1\varepsilon)\gamma (\tilde{x}_n)^+ - g(0)\gamma (\tilde{x}_n)^- > v_{t^*}(\tilde{x}),$ a contradiction.

\medskip

{\bf Case 2:} $\tilde{x} \in \overline{B}_{1/20}(\overline{x})$. Here $\psi \equiv 0$ so $v_{t^*}(\tilde{x}) =(1+ c_1\varepsilon)\gamma (\tilde{x}_n + t^*\varepsilon)^+ < (1+ c_1\varepsilon)\gamma (\tilde{x}_n + c_2\varepsilon)^+,$ as $t^* < c_2$.  But \eqref{insideinequality} implies $w(\tilde{x}) \geq (1+ c_1\varepsilon)\gamma (\tilde{x}_n + c_2\varepsilon)^+$, which is a contradiction.

\medskip

{\bf Case 3:} $\tilde{x} \in A$. If $v_{t}$ is a strict subsolution to the free boundary problem associated with $g$ in $A$, then Lemma \ref{comparisonlem} (the comparison lemma) gives the desired contradiction. 

\medskip

{\bf Proof that $v_t$ is a strict subsolution}: Note that in $(\{v_{t^*} > 0\} \cap A)\cup (\{v_{t^*} < 0\} \cap A)$ we have $\Delta v_{t^*} \geq -m \varepsilon c_2 \Delta \psi \geq m\varepsilon c_2k(n) > 0$ where $m = \gamma \min\{1, \tilde{k}\}$. 

We then need to show that $\{v_{t^*} = 0\}$ is locally the graph of a $C^2$ function. Observe $\{v_{t^*} = 0\} = \{x_n - \varepsilon c_2 \psi(x) + t^*\varepsilon = 0\}$. As $\psi \in C^\infty(\overline{A})$ it suffices to show that $|e_n - \varepsilon c_2 \nabla \psi(x)| \neq 0$ on $A$. But this is accomplished simply by picking $\overline{\varepsilon} < \frac{1}{c_2 M}$ where $M = \sup_{x\in A} |\nabla \psi(x)|$. $M$ depends only on dimension so $\overline{\varepsilon}$ can still be chosen universally.

To verify the boundary condition, let $x_0 \in \{v_{t} = 0\}$ and $\nu$ the unit normal pointing into $\{v_{t} > 0\}$ at $x_0$. Then $g(x_0)(v_{t}^+)_\nu + (v_{t}^-)_\nu = ((1+c_1\varepsilon)g(x_0)\gamma - g(0)\gamma)(e_n -\varepsilon c_2 \nabla \psi)\cdot \nu.$ As $\nu$ points into $\{v_{t} > 0\}$ it must be the case that $(e_n -\varepsilon c_2 \nabla \psi)\cdot \nu > 0$. So it is enough to prove that $(1+c_1\varepsilon)g(x_0) - g(0) > 0$. By assumption $|g(x_0) - g(0)| \leq 10 \varepsilon^2$ which means it suffices to show $c_1 \varepsilon g(x_0) > 10\varepsilon^2$. By picking $\overline{\varepsilon} > 0$ small enough (now depending on $\tilde{k}$)  this is true on $B_1(0)$ and we are done. 
\end{proof}

Using the one-sided Harnack inequality we can prove a two-sided Harnack type inequality. 

\begin{lem}\label{twosidedharnackinequality}[Compare with \cite{silvaferrarisalsa}, Theorem 4.1]
Let $\tilde{k} > 0$ and let $g \in C(B_2(0))$ such that $\inf_{x\in B_2(0)} g(x) \geq \tilde{k}$. Let $w$ be a solution to the free boundary problem associated to $g$ in $B_2(0)$.  Assume $w$ satisfies at some point $x_0 \in B_2$, \begin{equation}\nonumber U^{(0)}_\gamma(x\cdot \nu + a_0) \leq w(x) \leq U^{(0)}_\gamma(x\cdot \nu + b_0), \; \forall x\in B_r(x_0) \subset B_2(0)\end{equation} where $\nu \in \mathbb S^{n-1}, \gamma >0$ and $b_0 - a_0 \leq \varepsilon r, \sup_{x\in B_2}|g(x) - g(0)| \leq \varepsilon^2$ for some $\varepsilon > 0$. 

Then there exists some $\overline{\varepsilon} = \overline{\varepsilon}(n, \tilde{k}) > 0$ such that if $\varepsilon \leq \overline{\varepsilon}$ we can conclude $$U^{(0)}_{\gamma}(x\cdot \nu + a_1) \leq w(x) \leq U^{(0)}_\gamma(x\cdot \nu + b_1), \; \forall x\in B_{r/20}(x_0),$$ where $a_0 \leq a_1 \leq b_1 \leq b_0$ and $b_1 - a_1 \leq (1-c)\varepsilon r$. Here $c = c(n, \tilde{k}) > 0$. 
\end{lem}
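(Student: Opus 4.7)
The plan is to reduce to the one-sided Harnack inequality (Lemma \ref{harnacklemma}) by rescaling and applying a dichotomy: compare $w$ to the two-plane solution whose shift is the midpoint of $[a_0,b_0]$, and improve whichever side of the squeeze the solution happens to be closer to.

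First I would rescale. Setting $v(y):=w(ry+x_0)/r$ on $B_1(0)$, scale invariance gives that $v$ solves the free boundary problem associated to $\tilde g(y):=g(ry+x_0)$, with $\inf_{B_1}\tilde g\ge \tilde k$ and oscillation bounded by $2\varepsilon^2\le 10\varepsilon^2$ on $B_1(0)$, well inside the budget of Lemma \ref{harnacklemma}. The hypothesis becomes
\begin{equation*}
U_\gamma^{(0)}(y\cdot\nu+\tilde a_0)\le v(y)\le U_\gamma^{(0)}(y\cdot\nu+\tilde b_0),\qquad y\in B_1(0),
\end{equation*}
with $\tilde a_0:=a_0/r$, $\tilde b_0:=b_0/r$, and $\tilde b_0-\tilde a_0\le\varepsilon$. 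Let $m:=(\tilde a_0+\tilde b_0)/2$ and $\bar y:=\tfrac{1}{5}\nu$. Either $v(\bar y)\ge U_\gamma^{(0)}(\bar y\cdot\nu+m)$ or $v(\bar y)\le U_\gamma^{(0)}(\bar y\cdot\nu+m)$, and these two cases are symmetric (one uses the ``from below'' vs.\ the ``from above'' half of Lemma \ref{harnacklemma}).

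Say we are in the first case. Translate by setting $V(z):=v(z-\tilde a_0\nu)$, so that the lower bound reads $V(z)\ge U_\gamma^{(0)}(z\cdot\nu)$ on a ball containing $B_{1-O(\varepsilon)}(0)$, and at $\bar z:=\bar y+\tilde a_0\nu$ we have
\begin{equation*}
V(\bar z)=v(\bar y)\;\ge\;U_\gamma^{(0)}\bigl(\bar z\cdot\nu+(\tilde b_0-\tilde a_0)/2\bigr).
\end{equation*}
After a cosmetic rescaling by $1+O(\varepsilon)$ to land exactly on $B_1(0)$, Lemma \ref{harnacklemma} (whose constants depend only on $n$ and $\tilde k$) yields $V(z)\ge U_\gamma^{(0)}(z\cdot\nu+c'\varepsilon)$ on $B_{1/2}(0)$, and reversing all the shifts provides
\begin{equation*}
v(y)\;\ge\;U_\gamma^{(0)}(y\cdot\nu+\tilde a_0+c''\varepsilon),\qquad y\in B_{1/20}(0),
\end{equation*}
the passage from $B_{1/2}$ down to $B_{1/20}$ leaving plenty of room to absorb the $O(\varepsilon)$ translation. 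Taking $\tilde a_1:=\tilde a_0+c''\varepsilon$ and $\tilde b_1:=\tilde b_0$ and undoing the initial rescaling yields the conclusion with $c=c''$; the symmetric case produces the analogous improvement $\tilde b_1:=\tilde b_0-c''\varepsilon$ on the upper side.

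The main obstacle is the bookkeeping of these $O(\varepsilon)$ translations and the verification that both $\inf g\ge\tilde k$ and the oscillation condition on $g$ survive restriction to the shifted subdomain (this is precisely where the smallness $\varepsilon\le\overline\varepsilon(n,\tilde k)$ is spent). Once one accepts that all such losses are controlled by shrinking from $B_{1/2}$ to $B_{1/20}$, the lemma follows as a direct corollary of Lemma \ref{harnacklemma} applied to the appropriate half of the dichotomy.
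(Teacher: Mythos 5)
Your dichotomy-and-translate strategy is exactly the idea of the paper's Case 3, but there is a genuine gap: you assert that after translating by $-\tilde a_0\nu$ the function $V$ is defined on a ball ``containing $B_{1-O(\varepsilon)}(0)$,'' which amounts to claiming $|\tilde a_0|=O(\varepsilon)$. That is false in general. The hypotheses only say $\tilde b_0-\tilde a_0\le \varepsilon$; they impose no smallness on $|\tilde a_0|=|a_0|/r$, which a priori can be of size $O(1)$. The lemma does not assume $w(x_0)=0$ (and indeed Corollary \ref{convergencecor} applies it with $x_0$ ranging over all of $B_1$, not just free-boundary points), so $x_0$ can be far from $\{w=0\}$ and $|\tilde a_0|$ can be large. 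When, say, $\tilde a_0>1/5$, the translated function $V(z)=v(z-\tilde a_0\nu)$ is only defined on $B_1(\tilde a_0\nu)$, which does not contain $B_{4/5}(0)$ — let alone the ball $B_{3/4}(\bar y)$ around $\bar y=\nu/5$ in which the subsolution of Lemma \ref{harnacklemma} must live. So the one-sided Harnack lemma simply cannot be invoked on the domain you have after translation.

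The paper's proof handles this by splitting into three cases according to the size of $a_0$ (after normalizing $r=1,\nu=e_n$). If $|a_0|<1/5$ one proceeds as you describe: translate by $a_0 e_n$, note that $B_{4/5}(a_0 e_n)\subset B_1(0)$, pick the better side of the dichotomy at $\bar x=4e_n/25-a_0e_n$, and apply Lemma \ref{harnacklemma}. But if $a_0>1/5$ (resp.\ $a_0<-1/5$) the two squeezing planes both lie above (resp.\ below) $B_{1/10}(0)$ for small $\varepsilon$, so $w$ has a fixed sign on $B_{1/10}$ and the free boundary is irrelevant: there one sets $v:=(w-\gamma(x_n+a_0))/(\gamma\varepsilon)$ (or the $g(0)\gamma$ version on the negative side), observes $0\le v\le 1$ with $\Delta v=0$, and the classical interior Harnack inequality contracts the oscillation by a universal factor. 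Your argument, as written, silently omits these two ``no free boundary'' cases and cannot be repaired merely by ``shrinking from $B_{1/2}$ to $B_{1/20}$''; you need the explicit trichotomy on $a_0$ (or something equivalent showing the free boundary is either near $x_0$, in which scenario Lemma \ref{harnacklemma} applies, or far from $x_0$, in which scenario the classical Harnack applies).
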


\begin{proof}
Without loss of generality $x_0 = 0, r = 1, \nu = e_n$.  There are three cases, each of which produces a universal $0 < \tilde{c} < 1$. Take $c$ to be the minimum of these three.

\medskip

{\bf Case 1:} $a_0 < -1/5$. For small $\varepsilon > 0$ we have $x_n + b_0 < 0$ on $B_{1/10}$. Therefore, by the assumed inequality on $w$, $$0 \leq v(x)\colonequals \frac{w(x) -g(0)\gamma(x_n+a_0)}{g(0)\gamma\varepsilon} \leq 1,\; \forall x\in B_{1/10}.$$ Additionally, $\Delta v = 0$ on $B_{1/10}$.

So by the Harnack inequality there are constants $1 \geq k_1 \geq k_2 \geq 0$ such that $k_1 -k_2 = 1-\tilde{c} < 1$ where $\tilde{c}$ is universal (though $k_1, k_2$ may depend on $w$) and $k_1 \geq v(x) \geq k_2$ on $B_{1/20}$. 

This implies $$U_\gamma(x_n + a_0 + k_2\varepsilon) \leq w(x) \leq U_\gamma(x_n + a_0 + k_1\varepsilon), \; \forall x\in B_{1/20}.$$ Set $a_1 = a_0 + k_2\varepsilon$ and $b_1 = a_0 + k_1\varepsilon$, so that $a_0 \leq a_1 \leq b_1 \leq b_0$ and $b_1 -a_1 \leq (k_1 -k_2)\varepsilon = (1-\tilde{c})\varepsilon$.

\medskip

{\bf Case 2:} $a_0 > 1/5$. In this case $a_0 + x_n > 0$ on $B_{1/10}$ and so $$0 \leq v(x) \colonequals \frac{w(x) - \gamma(x_n + a_0)}{\gamma \varepsilon}\leq 1$$ on $B_{1/10}$. The rest of the argument follows exactly as in {\bf Case 1}. 

\medskip

{\bf Case 3:} $|a_0| < 1/5$. We can rewrite the main assumption as $$U_\gamma(x_n + a_0) \leq w(x) \leq U_\gamma(x_n + a_0 + \varepsilon),\; x\in B_{1}(0).$$ Without loss of generality, assume that \begin{equation}w(\overline{x}) \geq U_\gamma(\overline{x}_n + a_0 + \varepsilon/2)\end{equation} where $\overline{x} = 4e_n/25 - a_0e_n$ (the case with the reverse inequality is similar). 

If $v(x)\colonequals w(x - a_0e_n)$ for $x\in B_{4/5}(0)$, then the above can be rewritten as \begin{equation}\label{equationsforv}
\begin{aligned}
U_\gamma(x_n) \leq v(x) &\leq U_\gamma(x_n + \varepsilon), \; \forall x\in B_{4/5}(0).\\
v(4e_n/25) &\geq U_\gamma(4/25 + \varepsilon/2).
\end{aligned}
\end{equation}

Note that $v$ satisfies the free boundary problem associated to $\tilde{g}$ which is a translate of $g$. Thus we can apply Lemma \ref{harnacklemma} with $\varepsilon/2$ and inside $B_{4/5}$ to get that $$v(x) \geq U_\gamma(x_n + \tilde{c}\varepsilon), x\in \overline{B}_{2/5}(0)\Rightarrow$$$$w(x) \geq U_\gamma(x_n + a_0 + \tilde{c}\varepsilon), x\in \overline{B}_{1/5}(0),$$ for some universal $0 < \tilde{c} < 1$. Letting $a_1 = a_0 + \tilde{c}\varepsilon$ and $b_1 = b_0$ we have $b_1 - a_1 = b_0 -a_0 - \tilde{c}\varepsilon \leq (1-\tilde{c})\varepsilon$.
\end{proof}

With these lemmata in hand we can prove the following regularity result. This will be crucial in the proof of Lemma \ref{iterationstep} (the iterative step).

\begin{cor}\label{convergencecor}[Compare with \cite{silvaferrarisalsa}, Corollary 8.2]
Let $w, \gamma, g, \nu, \varepsilon, x_0$ satisfy the assumptions of Lemma \ref{twosidedharnackinequality} with $r=1$. Define \begin{equation}\label{tildedefinition}\tilde{w}_\varepsilon \colonequals\left\{\begin{aligned} &\frac{w(x) - \gamma x\cdot \nu}{\gamma \varepsilon},\; x\in B_2(0) \cap \{w \geq 0\}\\
&\frac{w(x) - g(0)\gamma x\cdot \nu}{g(0)\gamma \varepsilon},\; x\in B_2(0) \cap \{w < 0\} \end{aligned}\right. \end{equation}

Then $\tilde{w}_\varepsilon$ has a H\"older modulus of continuity at $x_0$ outside the ball of radius $\varepsilon/\overline{\varepsilon}$, i.e. for all $x\in B_1(x_0)$ with $|x-x_0| \geq \varepsilon/\overline{\varepsilon}$ $$|\tilde{w}_\varepsilon(x) - \tilde{w}_\varepsilon(x_0)| \leq C |x-x_0|^\chi$$ where $C, \chi$ depend only $n, \tilde{k}$. 
\end{cor}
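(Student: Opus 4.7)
The plan is to prove the corollary by iterating Lemma \ref{twosidedharnackinequality} to obtain geometric decay of oscillation on dyadic (base-$20$) balls around $x_0$, and then convert this decay into a H\"older estimate in the usual way.

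Starting from the given inequality
\[
U_\gamma^{(0)}(x\cdot\nu + a_0)\leq w(x)\leq U_\gamma^{(0)}(x\cdot\nu + b_0),\qquad x\in B_1(x_0),
\]
with $b_0-a_0\leq \varepsilon$, I would apply Lemma \ref{twosidedharnackinequality} repeatedly. At step $k$, I expect to have constants $a_0\leq a_k\leq b_k\leq b_0$ with
\[
U_\gamma^{(0)}(x\cdot\nu + a_k)\leq w(x)\leq U_\gamma^{(0)}(x\cdot\nu + b_k),\qquad x\in B_{20^{-k}}(x_0),
\]
and $b_k-a_k\leq (1-c)^k\varepsilon$. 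To re-apply the lemma at scale $r_k=20^{-k}$ one needs the ratio $(b_k-a_k)/r_k=(20(1-c))^k\varepsilon$ to remain below $\overline{\varepsilon}$. Since $1-c<1$, this ratio is bounded above by $20^k\varepsilon$, which stays $\leq\overline{\varepsilon}$ precisely when $20^{-k}\geq \varepsilon/\overline{\varepsilon}$. So the iteration is legitimate for all $k$ with $r_k\geq \varepsilon/\overline{\varepsilon}$, which is exactly the range the corollary asks about. The H\"older control on $g$ imposed in Lemma \ref{twosidedharnackinequality} is automatic from $\sup|g-g(0)|\leq \varepsilon^2$ rescaled to each step.

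Next I would translate the two-sided squeeze into an oscillation bound for $\tilde{w}_\varepsilon$. On $\{w\geq 0\}$ the upper bound forces $x\cdot\nu+b_k\geq 0$ whenever it matters, and gives $\gamma(x\cdot\nu+a_k)\leq w(x)\leq\gamma(x\cdot\nu+b_k)$, hence $a_k/\varepsilon\leq\tilde{w}_\varepsilon(x)\leq b_k/\varepsilon$; the analogous statement on $\{w<0\}$ comes from dividing the lower bound by $g(0)\gamma$. A short check shows that both pieces of $\tilde{w}_\varepsilon$ agree on $\{w=0\}$ (both equal $-x\cdot\nu/\varepsilon$ there), so $\tilde{w}_\varepsilon$ is continuous across the free boundary, and
\[
\operatorname*{osc}_{B_{r_k}(x_0)}\tilde{w}_\varepsilon\leq \frac{b_k-a_k}{\varepsilon}\leq (1-c)^k.
\]

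Finally, for any $x$ with $\rho:=|x-x_0|\in[\varepsilon/\overline{\varepsilon},1]$, pick the unique $k$ with $20^{-k-1}<\rho\leq 20^{-k}$; the iteration is valid all the way to this $k$, so
\[
|\tilde{w}_\varepsilon(x)-\tilde{w}_\varepsilon(x_0)|\leq (1-c)^k = (20^{-k})^{\chi}\leq (20\rho)^{\chi},
\]
with $\chi:=\log_{20}\!\bigl(1/(1-c)\bigr)>0$, which depends only on $n$ and $\tilde{k}$. This is the desired estimate. The main obstacle I foresee is bookkeeping: verifying that at every step $j\leq k$ the rescaled hypothesis of Lemma \ref{twosidedharnackinequality} really does hold (both the oscillation-to-radius bound and the H\"older control on $g$), and confirming that the two-sided pinching of $w$ by $U_\gamma$ actually gives oscillation control of the piecewise-defined $\tilde{w}_\varepsilon$ uniformly across the free boundary. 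Both are routine but require some care because the normalizations differ on the two sides of $\{w=0\}$.
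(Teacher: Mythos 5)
Your proof is correct and takes essentially the same approach as the paper: iterate Lemma \ref{twosidedharnackinequality} to get $U_\gamma(x\cdot\nu+a_m)\leq w\leq U_\gamma(x\cdot\nu+b_m)$ on $B_{20^{-m}}(x_0)$ with geometric decay of $b_m-a_m$, track the constraint $(20(1-c))^m\varepsilon\leq\overline{\varepsilon}$ (valid for $20^{-m}\geq\varepsilon/\overline{\varepsilon}$), and convert to a H\"older bound with $\chi=\log_{20}(1/(1-c))$. You actually fill in more detail than the paper does (the case analysis pinching $\tilde{w}_\varepsilon$ between $a_m/\varepsilon$ and $b_m/\varepsilon$ across the two normalizations, and the continuity of $\tilde{w}_\varepsilon$ across $\{w=0\}$), and your bookkeeping is sound — the one point worth noting is that one needs $20(1-c)\geq1$ so that the control $\sup|g-g(0)|\leq\varepsilon_m^2$ persists at each scale, which holds after shrinking $c$ if necessary.
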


\begin{proof}
Let $\nu = e_n$. Repeated application of Lemma \ref{twosidedharnackinequality} gives $$U_\gamma(x_n + a_m) \leq w(x) \leq U_\gamma(x_n + b_m), \; x\in B_{20^{-m}}(x_0),$$ with $b_m - a_m \leq  (1-c)^m\varepsilon$. However, we may only apply Lemma \ref{twosidedharnackinequality} when $m$ is such that $(1-c)^m20^m\varepsilon \leq \overline{\varepsilon}$ (as we are taking $r = 20^{-m}$ at the $m$th step). 

If $20^{-\chi} = (1-c)$ then we have, for each acceptable $m$, that $x\in B_{20^{-m}}(x_0)\backslash B_{20^{-m-1}}(x_0)$ implies $|\tilde{w}_\varepsilon(x) - \tilde{w}_\varepsilon(x_0)| \leq C|x-x_0|^\chi$. As above, $m$ must satisfy $20^{-m} \geq (1-c)^m \frac{\varepsilon}{\overline{\varepsilon}}$, which is true if $20^{-m} \geq  \frac{\varepsilon}{\overline{\varepsilon}}$. So we have the desired continuity outside $B_{\frac{\varepsilon}{\overline{\varepsilon}}}(x_0)$. 
\end{proof}

\subsection{The Transmission Problem and Proof of Lemma \ref{iterationstep}}

In order to prove Lemma \ref{iterationstep}, we will argue by contradiction and analyze the limit of the $\tilde{w}_\varepsilon$ (see  \eqref{tildedefinition}) as $\varepsilon \downarrow 0$. This limit will be the solution to a transmission problem which we introduce now.  

\begin{defin}\label{transmissionproblem} We say that $W\in C(B_\rho)$ is a {\bf classical solution to the transmission problem at 0} in $B_\rho$ if:
\begin{itemize}
\item $W \in C^{\infty}(B_\rho \cap \{x_n \geq 0\})\cap C^\infty(B_\rho \cap \{x_n \leq 0\}) $
\item $W$ satisfies \begin{equation}\label{transmissionequation}\begin{aligned} \Delta W &=0,\; x\in B_\rho(0) \cap \{x_n \neq 0\}\\
 \lim_{t\downarrow 0}W_n(x', t) - \lim_{t\uparrow 0}W_n(x', t) &= 0,\; x\in B_\rho(0) \cap \{x_n = 0\} \end{aligned} \end{equation}
 \end{itemize}
 
 When no confusion is possible, we will simply say that $W$ is a classical solution to the transmission problem or a classical solution to  \eqref{transmissionequation}.
 \end{defin}
 
We can deduce the following immediately from the definition:

\begin{lem}\label{growthoftranssolution}
Let $W$ be a classical solution to the transmission problem in $B_1$.  Then there is a universal constant $C$ and a constant $p$ (which depend on $W$) such that \begin{equation}\label{estimateontransgrowth}
|W(x) - W(0) - (\nabla_{x'}W(0) \cdot x' + px_n^+ - px_n^-)| \leq C\|W\|_{L^\infty(B_1)}r^2,\; \forall x = (x', x_n)\in B_r(0).
\end{equation}
\end{lem}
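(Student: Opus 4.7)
The key observation is that the transmission condition in fact forces $W$ to be harmonic throughout $B_1$, so the lemma reduces to the standard Taylor estimate for a harmonic function.

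First I would verify that $W$ is globally harmonic in $B_1$. Fix a test function $\phi \in C_c^\infty(B_1)$, split the domain into $B_1 \cap \{x_n > 0\}$ and $B_1 \cap \{x_n < 0\}$, and apply Green's second identity on each half. The bulk terms $\int \phi \Delta W$ vanish on each side by Definition \ref{transmissionproblem}. The interface contributions of the form $W \partial_\nu \phi$ cancel because $W$ is continuous across $\{x_n = 0\}$ and the outward normals on the two halves are opposite. The surviving boundary terms combine to
\begin{equation*}
\int_{\{x_n = 0\}} \phi \bigl(\lim_{t\downarrow 0}W_n(x',t) - \lim_{t\uparrow 0}W_n(x',t)\bigr)\, d\mathcal H^{n-1}(x'),
\end{equation*}
which vanishes by the transmission condition \eqref{transmissionequation}. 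Hence $\int W \Delta \phi = 0$ for every test function, so $\Delta W = 0$ distributionally on $B_1$, and Weyl's lemma upgrades $W$ to a (real-analytic) harmonic function on all of $B_1$.

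Now I set $p \colonequals \partial_n W(0)$, so that
\begin{equation*}
\nabla W(0)\cdot x \;=\; \nabla_{x'} W(0)\cdot x' + p\, x_n \;=\; \nabla_{x'} W(0)\cdot x' + p\, x_n^+ - p\, x_n^-.
\end{equation*}
The mean-value form of Taylor's theorem gives $W(x) - W(0) - \nabla W(0)\cdot x = \tfrac12\, D^2 W(\xi)[x,x]$ for some $\xi$ on the segment $[0,x]$. For $r \le 1/2$ and $x\in B_r(0)$, the segment lies in $B_{1/2}$, and interior estimates for harmonic functions yield $|D^2 W(\xi)| \le C(n)\|W\|_{L^\infty(B_1)}$, giving the stated bound. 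For $r \in [1/2, 1)$ the estimate is automatic from $\|W\|_{L^\infty(B_1)}$ and the interior gradient bound on $\nabla W(0)$, combined with $r^2 \ge 1/4$.

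There is no real obstacle here: the only genuine content is the short integration-by-parts argument that turns the continuous-flux transmission condition into global harmonicity, after which the conclusion is a textbook consequence of interior regularity.
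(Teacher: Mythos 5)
Your proof is correct, and given that the paper simply declares that the lemma can be "deduced immediately from the definition" without giving a proof, your argument is surely the intended one. The observation driving everything is right: continuity of $W$ across $\{x_n = 0\}$ plus the matching normal derivatives make $W$ a distributional solution of $\Delta W = 0$ on all of $B_1$ (your integration-by-parts computation, done half-ball by half-ball, is exactly how one verifies this), and Weyl's lemma then upgrades $W$ to a genuine harmonic function. Once $W$ is harmonic in $B_1$, the linear function $\nabla_{x'}W(0)\cdot x' + p x_n^+ - p x_n^-$ with $p = \partial_n W(0)$ is just $\nabla W(0)\cdot x$, and the quadratic error bound is the standard Taylor estimate with interior derivative bounds; the dichotomy $r\le 1/2$ versus $r\ge 1/2$ you give is the right way to make the constant uniform up to $r<1$. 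The only thing worth making explicit if you write this up fully is that the agreement of the tangential derivatives of $W$ at the origin from above and from below follows automatically from $W\in C(B_1)$ together with one-sided smoothness, so that $\nabla_{x'}W(0)$ is unambiguous even before global harmonicity is established.
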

 
Unfortunately, the conditions of Definition \ref{transmissionproblem} are too difficult to verify directly. It will be more convenient to work with viscosity solutions.

\begin{defin}\label{viscositytransmission} Let $\widetilde{W} \in C(B_\rho)$. We say that $\widetilde{W}$ is a viscosity solution to the transmission problem \eqref{transmissionequation} if:
\begin{itemize}
\item $\Delta \widetilde{W}(x) = 0$, in the viscosity sense, when $x\in \{x_n \neq 0\} \cap B_\rho$.
\item Let $\phi$ be any function of the form $$\phi(x) = A + px_n^+ - qx_n^- + BQ(x-y)$$ where $$Q(x) = \frac{1}{2}[(n-1)x_n^2 - |x'|^2],\; y= (y', 0),\; A\in \R, B> 0$$ and $p - q > 0$. Then $\phi$ cannot touch $\widetilde{W}$ strictly from below at a point $x_0 = (x_0', 0) \in B_\rho$. 
\item  If $p - q < 0$ then $\phi$ cannot touch $\widetilde{W}$ strictly from above on $\{x_n =0\}$. 
\end{itemize}
\end{defin}

The following result allows us to estimate the growth rate of viscosity solutions. We will omit the proof as it is identical to the one provided by De Silva, Ferrari and Salsa in  \cite{silvaferrarisalsa}. 

\begin{thm}\label{viscosityisclassical}[Theorem 3.3 and Theorem 3.4 in \cite{silvaferrarisalsa}]
Let $\widetilde{W}$ be a viscosity solution to \eqref{transmissionequation} in $B_1$ such that $\|\widetilde{W}\|_{L^\infty} \leq 1$. Then, in $B_{1/2}$, $\widetilde{W}$ is actually a classical solution to \eqref{transmissionequation}.  In particular, $\widetilde{W}$ satisfies the estimate \eqref{estimateontransgrowth}. 
\end{thm}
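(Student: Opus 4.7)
My plan is to reduce the theorem to the observation that any continuous function on $B_{\rho}$ which is harmonic on each of $\{x_n>0\}\cap B_\rho$, $\{x_n<0\}\cap B_\rho$ and has continuous transversal derivative across $\{x_n=0\}$ is automatically harmonic on all of $B_{\rho}$ (integrate by parts: the distributional Laplacian is the jump of $\partial_n W$ times surface measure on $\{x_n=0\}$, hence vanishes). Once this reduction is made, the estimate \eqref{estimateontransgrowth} becomes the standard second-order Taylor expansion of a harmonic function, using $x_n = x_n^+-x_n^-$ to rewrite the linear term. So the whole content is to upgrade the viscosity solution $\widetilde W$ to a $C^{1}$ function across the interface whose normal derivative is continuous there.

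The first step is immediate: in $\{x_n>0\}\cap B_1$ and $\{x_n<0\}\cap B_1$, the function $\widetilde W$ is a viscosity solution of $\Delta \widetilde W=0$, hence classically harmonic (and $C^\infty$) there. The core difficulty is to handle points $x_0=(x_0',0)$ on the interface. For this I would use the admissible test functions $\phi(x)=A+p x_n^+-qx_n^-+BQ(x-y)$, which are harmonic away from $\{x_n=0\}$ because $Q$ is, and whose jump $[\partial_n\phi]=p-q$ at $\{x_n=0\}$ can be prescribed with either sign. The touching conditions say $\phi$ with $p-q>0$ cannot touch $\widetilde W$ from below on the interface, and $\phi$ with $p-q<0$ cannot touch from above. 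These two conditions together force the upper and lower normal traces $\partial_n^\pm \widetilde W(x_0',0)$ to exist and coincide: if, say, the upper trace strictly exceeded the lower, one could slide a test function with $p-q$ suitably chosen to touch $\widetilde W$ from below, and the curvature term $BQ$ would allow one to perturb enough to obtain a strict touch — contradicting the viscosity condition.

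With the one-sided normal derivatives existing and matching, the argument is completed by a compactness and rescaling loop. Normalize $\|\widetilde W\|_{L^\infty(B_1)}\leq 1$ and consider, at each interface point $x_0$, the rescalings $\widetilde W_r(x):=r^{-2}\bigl(\widetilde W(x_0+rx)-\widetilde W(x_0)-\nabla_{x'}\widetilde W(x_0)\cdot rx'-p(x_0)(rx_n^+-rx_n^-)\bigr)$. If the quadratic bound \eqref{estimateontransgrowth} failed, one could extract a sequence $r_k\downarrow 0$ along which $\|\widetilde W_{r_k}\|_{L^\infty(B_1)}\to\infty$; renormalizing and passing to the limit (using interior $C^\infty$ bounds for harmonic functions on each half-ball and the uniform matching of normal derivatives to get equicontinuity up to the interface) yields a nontrivial viscosity solution of the transmission problem on $\R^n$ which is bounded by the envelope function and whose Taylor expansion to second order at the origin vanishes. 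By the reduction above this limit is harmonic on all of $\R^n$; a Liouville-type argument then forces it to vanish, a contradiction.

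The step I expect to be the main obstacle is the rigorous proof that the viscosity touching conditions preclude a jump in the normal derivative, since the test functions $\phi$ are themselves only piecewise smooth at $\{x_n=0\}$ and the notion of ``strict touching from below'' must be interpreted carefully. Once that point is settled — essentially an ABP/barrier computation using the strict concavity of $Q$ in $x'$ to open up room for a strict touch — the remaining steps are standard elliptic regularity and compactness.
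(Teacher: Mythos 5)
Your reduction---that matching normal traces make the distributional Laplacian vanish, so a classical solution is simply a harmonic function and \eqref{estimateontransgrowth} is the harmonic Taylor expansion---is correct and well chosen. Note, though, that the paper supplies no proof of this theorem; it defers entirely to De Silva--Ferrari--Salsa, so what you are sketching is their argument, and the question is whether your sketch would actually close the statement.

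There is a genuine gap, and it is structural rather than a matter of bookkeeping. Your sliding argument with $\phi=A+px_n^+-qx_n^-+BQ(x-y)$ shows that the one-sided normal traces, \emph{when they exist}, must agree; it does not produce them. A viscosity solution is only known a priori to be continuous, and continuity plus one-sided harmonicity does not give pointwise normal derivatives at the interface (half-space harmonic extensions of merely continuous boundary data generically have unbounded normal derivative). Your compactness loop inherits the same defect: defining $\widetilde W_r$ requires $\nabla_{x'}\widetilde W(x_0)$ and $p(x_0)$ to exist, and passing to a uniform limit requires equicontinuity of the rescalings up to $\{x_n=0\}$, neither of which follows from interior estimates on each half-ball (which degenerate as $x_n\to 0$) nor from ``uniform matching of normal derivatives'' (which assumes what you must prove). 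The missing ingredient---and the actual content of the De Silva--Ferrari--Salsa proof preceding their Theorems 3.3--3.4---is a Harnack inequality for the transmission problem, giving scale-invariant oscillation decay up to the interface. That estimate is what drives the compactness and improvement-of-flatness iteration; the pointwise quadratic expansion and hence the normal derivative are the \emph{output} of the iteration, not a prerequisite. Your closing remark locates the difficulty in interpreting ``strict touching'' for piecewise-smooth barriers, but that is not the obstacle; the obstacle is that without the Harnack step there is no preliminary up-to-the-interface regularity on which to anchor either the touching argument or the blowup.
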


With this machinery in hand we are ready to prove Lemma \ref{iterationstep}.

\begin{proof}[Proof of Lemma \ref{iterationstep}.]
It suffices to assume that $x_0 = 0$ and $\nu = e_n$ (by the rotation invariance of the conditions).  Fix any $r > 0$ small and let $\{\gamma_k\}, \{\varepsilon_k\}, \{w_k\}, \{g_k\}$ be such that $C_1 > \gamma_k > c_1, \varepsilon_k \downarrow 0$ and $w_k$ is a classical solution to the free boundary problem associated to $g_k$. Furthermore, $\inf_{x\in B_1(0)} g_k(x) \geq \tilde{k},\; \sup_{x,y \in B_1(0)}\frac{|g_k(x) - g_k(y)|}{|x-y|^\alpha} < \varepsilon_k^2$ and $w_k(x)$ satisfies \begin{equation}\label{sandwichinequality}U_{\gamma_k}^{(0)}(x_n -\varepsilon_k) \leq w_k(x) \leq U_{\gamma_k}^{(0)}(x_n+ \varepsilon_k),\; x\in B_1(0).\end{equation} However, to obtain a contradiction, assume the desired $\nu_k, \gamma_k'$ do not exist. 

Define $\tilde{w}_k$ as in \eqref{tildedefinition}. Then \eqref{sandwichinequality} implies that $\{\tilde{w}_k = 0\} \rightarrow \{x_n =0\}$ in the Hausdorff distance norm and  $\|\tilde{w}_k\|_{L^\infty} \leq 1$. These observations, combined with Corollary \ref{convergencecor} and the Arzel\`a-Ascoli theorem, show that $\tilde{w}_k\rightarrow \tilde{w}$ uniformly in $C(B_1(0))$ (after passing to subsequences). Furthermore, Corollary \ref{convergencecor} implies that $\tilde{w}$ is a $C^{0,\chi}$ function defined on $B_{1/2}(0)$. 

\medskip

\noindent {\bf Claim:} $\tilde{w}$ {\it is a viscosity solution in $B_{1/2}$ to the transmission problem.}

\medskip

If this is the case, $\tilde{w}$ satisfies the estimate \eqref{estimateontransgrowth}. So there is a $p$ such that $$|\tilde{w}(x) - \tilde{w}(0) - (\nabla_{x'}\tilde{w}(0) \cdot x' + px_n^+ - px_n^-)| \leq Cr^2,\; \forall x = (x', x_n)\in B_r(0).$$  Because $\|\tilde{w}\|_{L^\infty} \leq 1$ we have $|p| < 10$.  We will also pick $r$ small enough so that $8Cr < 1$.  

As $\tilde{w}_k$ converges uniformly to $\tilde{w}$, for large enough $k$ (depending on $r$ possibly) we have \begin{equation}\label{transmissionestimatefork} |\tilde{w}_k(x) - (\nabla_{x'}\tilde{w}(0) \cdot x' + px_n^+ - px_n^-)| \leq 2Cr^2,\; \forall x = (x', x_n)\in B_r(0).\end{equation} Let $\nu_k \colonequals \frac{1}{\sqrt{1+ \varepsilon_k^2 |\nabla_{x'}\tilde{w}(0)|^2}}(\varepsilon_k \nabla_{x'}\tilde{w}(0), 1)$ and $\gamma_k' \colonequals \gamma_k(1+\varepsilon_kp)$. We will now prove \begin{equation}\label{condition A}\tag{A} U_{\gamma_k'}(x\cdot \nu_k - r\frac{\varepsilon_k}{2}) \leq w_k(x) \leq U_{\gamma'_k}(x\cdot \nu_k + r \frac{\varepsilon_k}{2}),\; x\in B_r(0)\end{equation} and also \begin{equation}\label{conditionb}\tag{B}|\gamma_k'-\gamma_k| \leq \tilde{C}\varepsilon_k\gamma_k,\; |e_n - \nu_k| \leq \tilde{C}\varepsilon_k,\end{equation} for some universal $\tilde{C}$. This is the desired contradiction. 

\medskip

\noindent {\bf Proof of \eqref{condition A}:} Assume $w_k(x) \geq 0$ (the other case follows similarly). \eqref{transmissionestimatefork} implies $$(\nabla_{x'}\tilde{w}(0) \cdot x' + px_n^+ - px_n^-) - 2Cr^2 \leq \frac{w_k(x) - \gamma_k x_n}{\gamma_k\varepsilon_k} \leq 2Cr^2 + (\nabla_{x'}\tilde{w}(0) \cdot x' + px_n^+ - px_n^-)$$ for $x\in B_r(0)$. Consider the inequality on the left. Some algebraic manipulation yields $$\gamma_k x_n + \gamma_k\varepsilon_k ((\nabla_{x'}\tilde{w}(0) \cdot x' + px_n^+ - px_n^-) - 2Cr^2) \leq w_k(x),\; \forall x\in B_r(0)\cap \{w_k \geq 0\}.$$ We can rewrite this again to obtain $$\sqrt{1+\varepsilon_k^2|\nabla_{x'}\tilde{w}(0)|^2} U_{\gamma_k'}(x\cdot \nu_k)- \gamma_kp\varepsilon_k^2 |\nabla_{x'}\tilde{w}(0)\cdot x|- 2Cr^2 \gamma_k\varepsilon_k \leq w_k(x),\; \forall x\in B_r(0)\cap \{w_k \geq 0\}.$$

The Cauchy-Schwartz inequality, followed by some more algebraic manipulation, gives $$U_{\gamma'_k}(x\cdot \nu_k) - \gamma'_k r \frac{\varepsilon_k}{2}\left(\frac{2p\varepsilon_k |\nabla_{x'} \tilde{w}(0)| + 4Cr}{1+\varepsilon_kp} \right)\leq w_k(x),\; \forall x\in B_r(0)\cap \{w_k \geq 0\}.$$ Recall that $r$ was chosen so that $8Cr < 1$.  Now pick $k$ large enough so that $20\varepsilon_k |\nabla_{x'}\tilde{w}(0)| < 1/2$. Together this implies $\left(\frac{2p\varepsilon_k |\nabla_{x'} \tilde{w}(0)| + 4Cr}{1+\varepsilon_kp} \right) < 1$. In conclusion, $$U_{\gamma'_k}(x\cdot \nu_k-r\frac{\varepsilon_k}{2}) \leq w_k(x), \; \forall x\in B_r(0)\cap \{w_k \geq 0\}.$$ The upper bound on $w_k$ and the inequalities for when $w_k < 0$ follow in the same fashion.  

\medskip

\noindent {\bf Proof of \eqref{conditionb}:} We compute  $|\gamma_k'-\gamma_k| = \varepsilon_k p \gamma_k \leq 10 \varepsilon_k \gamma_k$.  Also $|\nu_k- e_n|^2 = (\nu_k - e_n, \nu_k-e_n) = 2 -2(e_n, \nu_k) = 2-\frac{2}{\sqrt{1+\varepsilon_k^2|\nabla_{x'}\tilde{w}(0)|^2}}$. For large $k$ (so that $\varepsilon_k|\nabla_{x'}\tilde{w}(0)| < 1/2$)  the taylor series expansion of $\sqrt{1+x^2}$ yields the estimate $|\nu_k - e_n|^2 \leq \varepsilon_k^2|\nabla_{x'}\tilde{w}(0)|^2$. Let $\tilde{C} = \max\{ |\nabla_{x'}\tilde{w}(0)|, 10\}$ and we are done. 

\medskip

{\bf Proof of Claim:} We want to establish that $\tilde{w}$ is a viscosity solution to the transmission problem. As $\Delta \tilde{w}_k = 0$, wherever $\{\tilde{w}_k \neq 0\}$, it is clear that $\Delta \tilde{w} = 0$, in the viscosity sense, when $\{x_n \neq 0\}$. It remains to verify the boundary condition. 

So assume, in order to reach a contradiction, that there is a function $$\tilde{\phi}(x) \colonequals A + px_n^+ - qx_n^- + BQ(x-y),$$ with $p-q > 0$, which touches $\tilde{w}$ strictly from below at $x_0 = (x_0', 0)$ (the case where $p-q < 0$ and $\tilde{\phi}$ touches from above follows similarly). Recall $Q(x) \colonequals \frac{1}{2}[(n-1)x_n^2 - |x'|^2], y = (y',0), B > 0$ and $A \in \R$. We now construct a family of functions which converge uniformly to $\tilde{\phi}$. Define $$\Gamma(x) \colonequals \frac{1}{n-2}[(|x|'^2 + |x_n - 1|^2)^{\frac{2-n}{2}} - 1]\: \mathrm{and}\:\Gamma_k(x) \colonequals \frac{1}{B\varepsilon_k} \Gamma(B\varepsilon_k(x-y) + AB\varepsilon_k^2e_n).$$ Additionally, let $$\phi_k(x) \colonequals \gamma_k(1+\varepsilon_kp)\Gamma_k^+(x) - g(0)\gamma_k(1+\varepsilon_kq)\Gamma^-_k(x) + \gamma_k(d_k^+(x))^2\varepsilon_k^{3/2} + g(0)\gamma_k(d^-_k(x))^2\varepsilon_k^{3/2},$$ where $d_k$ is the signed distance from $x$ to $\partial B_{\frac{1}{B\varepsilon_k}}(y+ e_n(A\varepsilon_k-\frac{1}{B\varepsilon_k}))$. Finally, we can define $\tilde{\phi}_k$ as in \eqref{tildedefinition}.  

A taylor series expansion gives $\Gamma(x) = x_n + Q(x) + O(|x|^3)$ and thus $$\Gamma_k(x) = A\varepsilon_k + x_n + B\varepsilon_kQ(x-y) + O(\varepsilon_k^2),\; x\in B_1.$$  Therefore, $\tilde{\phi}_k$ converges uniformly to $\tilde{\phi}$. The existence of a touching point $x_0$ implies a sequence of constants, $c_k$, and points, $x_k\in B_{1/2}$, such that $\psi_k(x) \colonequals \phi_k(x + \varepsilon_kc_ke_n)$ touches $w_k$ from below at $x_k$. We will get the desired contradiction if $\psi_k$ is a strict subsolution to the free boundary problem associated to $g_k$. 

When $\psi_k \neq 0$ we have $\Delta \psi_k \gtrsim \Delta d_k^2(x + \varepsilon_kc_ke_n) > 0$.  If $\psi_k = 0$ a straightfoward computation shows $\Gamma_k(x+\varepsilon_kc_ke_n) = d_k(x + \varepsilon_kc_ke_n) = 0$. Thus, $|\nabla d_k^2| = 0$ whenever $\psi_k = 0$. We can also compute $(\nabla \Gamma_k^\pm)_\nu = \pm 1$ on $\psi_k = 0$. Putting this together, $g_k(x)(\psi_k(x)^+)_\nu + (\psi_k(x)^-)_\nu = g_k(x)\gamma_k(1+\varepsilon_kp) - g(0)\gamma_k(1+\varepsilon_kq)$. Recall, $|g_k(x) - g(0)| = |g_k(x)-g_k(0)| \leq \varepsilon_k^2$ which implies, $g_k(x) \geq g(0) - \varepsilon_k^2$. Therefore, $g_k(x)(\psi_k(x)^+)_\nu + (\psi_k(x)^-)_\nu \geq g(0)\gamma_k\varepsilon_k(p-q) - \varepsilon_k^2\gamma_k(1+\varepsilon_kp)$. We are done if this last term is $> 0$. It is easy to see $$g(0)\gamma_k\varepsilon_k(p-q) - \varepsilon_k^2\gamma_k(1+\varepsilon_kp) > 0 \Leftrightarrow g(0)(p-q) > \varepsilon_k(1+\varepsilon_kp)$$ which is clearly true for $k$ large enough. 

\end{proof}

\section{Optimal H\"older regularity and higher regularity}\label{sec: higherholder}

 Proposition \ref{initialholderregularity} tells us that if $\log(h) \in C^{0,\alpha}(\partial \Omega)$ then $\partial \Omega$ is locally the graph of a $C^{1,s}$ function for some $s > 0$. In this section we will introduce tools from elliptic regularity theory in order to establish the sharp estimate $s = \alpha$. These tools will also allow us to analyze the case when $\log(h) \in C^{k,\alpha}(\partial \Omega)$ for $k \geq 1$.

 \subsection{Partial Hodograph Transform and Elliptic Systems} We begin by recalling the partial hodograph transform (see \cite{kinderlehrerstampacchia}, Chapter 7 for a short introduction).  Here, and throughout the rest of the paper, we assume that $0 \in \partial \Omega$ and that, at 0, $e_n$ is the inward pointing normal to $\partial \Omega$.
 
Define $F^+: \Omega^+ \rightarrow \mathbb H^+$ by $(x', x_n) = x \mapsto y = (x', u^+(x))$. Because $u^+_n(0) = \frac{d\omega^+}{d\sigma}(0) \neq 0$ (Proposition \ref{nondegeneracy}), $DF^+(0)$ is invertible. So, by the inverse function theorem, there is some neighborhood, $\mathcal O^+$, of $0$ in $\Omega^+$ that is mapped diffeomorphically to $U$, a neighborhood of $0$ in the upper half plane. Furthermore, this map extends in a $C^1$ fashion from $\overline{\mathcal O^+}$ to $\overline{U}$ (by Corollary \ref{c1extension}).

Similarly, define $F^-: \Omega^- \rightarrow \mathbb H^+$ by $(x', x_n) = x\mapsto y = (x', u^-(x))$. Again $u^-_n(0) \neq 0$ so $DF^-(0)$ is invertible. We can conclude, as above, that there is a neighborhood, $\mathcal O^-$, of $0$ in $\Omega^-$ that is mapped diffeomorphically to $U$ (perhaps after shrinking $U$) and that this map extends in a $C^1$ fashion from $\overline{\mathcal O^-}$ to $\overline{U}$.

Let $\psi: \overline{U} \rightarrow \R$ be given by $\psi(y) = x_n$, where $F^+(x) = y$. Because $F^+$ is locally one-to-one, $\psi$ is well defined. Similarly, define $\phi: \overline{U} \rightarrow \R$ by $\phi(y) = -x_n$ where $F^-(x) = y$. Again, $F^-$ is locally one-to-one, so $\phi$ is well defined. 

If $\nu_y$ denotes the normal vector to $\partial \Omega$ pointing into $\Omega$ at $y$, then $u$ satisfies \begin{eqnarray}
\Delta u^+(x) &=& 0, \; x\in \Omega^+\nonumber\\
\Delta u^-(x) &=& 0,\; x\in \Omega^-\nonumber\\
(u^+)_{\nu_x}(x)h(x) &=& -(u^-)_{\nu_x}(x),\; x\in \partial \Omega.\nonumber
\end{eqnarray}

After our change of variables these equations become \begin{equation}\label{transformedequations}
\begin{aligned}
0 &= \frac{1}{2}\left(\frac{1}{\psi_n^2}\right)_n + \sum_{i=1}^{n-1} \left(-\left(\frac{\psi_i}{\psi_n}\right)_i + \frac{1}{2}\left(\frac{\psi_i^2}{\psi_n^2}\right)_n\right)\\
0&= \frac{1}{2}\left(\frac{1}{\phi^2_n}\right)_n + \sum_{i=1}^{n-1} \left(-\left(\frac{\phi_i}{\phi_n}\right)_i + \frac{1}{2}\left(\frac{\phi_i^2}{\phi_n^2}\right)_n\right),
\end{aligned}
\end{equation} with both equations taking place for $y\in U$. On the boundary we have  \begin{equation}\label{transformedboundary}\begin{aligned}
\phi(y) + \psi(y)  &= 0,\; y\in \{y_n = 0\}\cap \overline{U}\\
\left(\frac{\tilde{h}(y)}{\psi_n(y)}\right) - \frac{1}{\phi_n(y)} &= 0, \; y\in \{y_n =0\}\cap \overline{U},
\end{aligned}\end{equation} where $\tilde{h}((y',0)) = h((y', \psi(y)))$. 

\begin{rem}\label{transformremarks} The following are true of $\phi, \psi$:
\begin{itemize}
\item Assume $\psi, \phi \in C^{k,s}(\overline{U}\cap \{y_n =0\})$ with $k\geq 1, s\in (0,1)$  Then $u^\pm \in C^{k,s}(\overline{\mathcal O}^\pm) \Leftrightarrow \psi, \phi \in C^{k,s}(\overline{U})$. 
\item If $h \in C^{k,\alpha}(\partial \Omega)$ and $\psi, \phi \in C^{k+1,s}(\overline{U})$  for any $s,\alpha \in (0,1)$, then $\tilde{h} \in C^{k,\alpha}(\overline{U}\cap\{y_n = 0\})$ with norm depending only on the H\"older norms of $h$ and $\psi, \phi$. 
\item $\phi_n, \psi_n > 0$ in $\overline{U}$. 
\end{itemize}
\end{rem}

\begin{proof}
Let us address the first statement; when $k \geq 2$ this follows from standard elliptic regularity applied to the function $\tilde{u}^+(x) = u^+(x+ \phi(x',0))$ (and a similarly defined $\tilde{u}^-$).  When $k = 1$, a theorem of Kellogg \cite{kellogg} says that $\nabla u^{\pm}$ has non-tangential limit everywhere on $\partial \Omega\cap \overline{\mathcal O^{\pm}}$ and that this non-tangential limit is in $C^{0,s}$. We can then argue as in the proof of Corollary \ref{c1extension} to see that $\nabla u^{\pm} \in C^{0,s}(\overline{\mathcal O}^{\pm})$; the desired result.

To prove the second statement when $k = 0$, one computes $|\tilde{h}(y_1)-\tilde{h}(y_2)| = |h((y_1', \psi(y_1))) - h((y_2', \psi(y_2)))| \leq C |(y_1', \psi(y_1)) - (y_2', \psi(y_2))|^\alpha \leq C'|y_1-y_2|^\alpha$ where that last inequality follows because $\psi \in C^{1,s}(\overline{U})$. So $\tilde{h} \in C^{0,\alpha}(\{y_n =0\}\cap \overline{U})$. When $k \geq 1$ we note that $\partial_i \tilde{h}(y, 0) = \partial_i h(y, \psi(y)) + \partial_n h(y,\psi(y)) \partial_i \psi(y)$. By assumption  $\partial_i \psi(y)$ is at least as regular as $\partial_n h(y,\psi(y))$ so the result follows by induction.

The third claim follows immediately from construction. 
\end{proof}

We now recall the concepts of an elliptic system of equations and coercive boundary conditions.  For the sake of brevity, our Definition \ref{weakellipticsystem} is not fully general---it considers only a specific type of system in ``divergence form". A comprehensive introduction to elliptic systems can be found in Morrey (\cite{morrey}), Chapter 6 (weak solutions in particular are covered in Section 6.4).

\begin{defin}\label{weakellipticsystem}
Let $u^k$, $k = 1,2$, satisfy \begin{equation}\label{weakformulation}
\begin{aligned}
\int_{U} \sum_{\stackrel{|\chi| \leq m_1}{|\gamma| \leq t_1 + s_1-m_1}} a^1_{\chi\gamma}(x) D^\gamma u^1 D^\chi \zeta &= \int_U \sum_{|\chi| \leq m_1} f_\chi^1 D^\chi\zeta\\
\int_{U} \sum_{\stackrel{|\chi| \leq m_2}{|\gamma| \leq t_2 + s_2-m_2}} a^2_{\chi\gamma}(x) D^\gamma u^2 D^\chi \zeta &= \int_U \sum_{|\chi| \leq m_2} f_\chi^2 D^\chi\zeta
\end{aligned}
\end{equation}
for all $\zeta \in C_0^\infty(U)$. Additionally assume,
\begin{equation}\label{weakformulationbdry}
\begin{aligned}
\int_{\partial U\cap \{y_n = 0\}} \sum_{|\chi| \leq p_1}\left(\sum_{k=1}^2 B^1_{k\chi}(D_x, D_y, x) u^k\right) D_x^\chi \xi dx &= \int_{\partial U\cap \{y_n = 0\}} \sum_{|\chi| \leq p_1} g_\chi^1  D_x^\chi \xi dx\\
\int_{\partial U\cap \{y_n = 0\}} \sum_{|\chi| \leq p_2}\left(\sum_{k=1}^2 B^2_{k\chi}(D_x, D_y, x) u^k\right) D_x^\chi \xi dx &= \int_{\partial U\cap \{y_n = 0\}} \sum_{|\chi| \leq p_2} g_\chi^2  D_x^\chi \xi dx
\end{aligned}
\end{equation}
for all $\xi \in C_0^\infty(\partial U\cap \{y_n = 0\})$. Throughout, $\gamma,\chi$ are multi-indices.  Let $h_1, h_2,$ be such that $B^1_{k\chi}$ is of order $\leq t_k - h_1-p_1$ and $B^2_{k\chi}$ is of order $\leq t_k - h_2-p_2$. This system has {\bf a proper assignment of weights} if there exists an $h_0$ such that $h_0$ and the $t_k, m_j, s_j, h_r, p_r$, $k,j,r = 1,2$ satisfy the following conditions: 
\begin{itemize}
\item $\min_{j,k} s_j + t_k \geq 1$ and $\min_{j,k} t_k + s_j -m_j \geq 0$
\item  $\min m_j \geq 0$ and $\max s_j = 0$. 
\item $\min p_r \geq 0$ and $\min h_0 + h_r + p_r \geq 1$
\item $\min t_k + h_0 \geq 0$ and $\min h_0 -s_j + m_j \geq 0$. 
\end{itemize}

We say the above system is {\bf elliptic} if the block diagonal matrix $$M = \left( \begin{array}{cc}
(a^1_{\gamma\chi})_{|\chi| = m_1, |\gamma| = t_1 + s_1-m_1}  & 0 \\
0 & (a^2_{\gamma\chi})_{|\chi| = m_2, |\gamma| = t_2 + s_2-m_2}  \\
 \end{array} \right)$$ is an elliptic matrix for any $x_0 \in U$. Additionally, when $n = 2$, we require that, for any linearly independent $\xi, \eta \in \R^2$, half the roots of the equation $$\det \left( \begin{array}{cc}
\sum_{|\chi| = m_1, |\gamma| = t_1 + s_1-m_1} a^1_{\gamma\chi}\cdot(\xi + z\eta)_{\chi + \gamma}  & 0 \\
0 & \sum_{|\chi| = m_2, |\gamma| = t_2 + s_2-m_2}a^2_{\gamma\chi}\cdot(\xi + z\eta)_{\chi + \gamma}  \\
 \end{array} \right) = 0$$ have positive imaginary part and the other half have negative imaginary part. 
 
Finally, we say that the boundary equations are {\bf coercive} if for all $y_0 \in \overline{U}\cap \{y_n = 0\}$ the system \begin{equation}\label{coerciveboundarycondition}
\begin{aligned}
\sum_{|\chi| = m_1, |\gamma| = t_1 + s_1 - m_1}a^1_{\chi\gamma}(y_0) D^{\gamma+\chi} v^1(y) &= 0\\
\sum_{|\chi| = m_2, |\gamma| = t_2 + s_2 - m_2}a^2_{\chi\gamma}(y_0) D^{\gamma+\chi} v^2(y) &=0\\
\sum_{|\chi| = p_1} \sum_{k=1}^2 \tilde{B}^1_{k\chi}(D_x, D_y, y_0) v^k((y',0)) &= 0\\
\sum_{|\chi| = p_2} \sum_{k=1}^2 \tilde{B}^2_{k\chi}(D_x, D_y, y_0) v^k((y',0)) &= 0
\end{aligned}
\end{equation}
has no solutions of the form $v^k((y', y_n)) = e^{i y' \cdot \xi'}\tilde{v}^k(y_n), k = 1,2$ where $\tilde{v}^k(y_n)\rightarrow 0$ as $y_n \rightarrow +\infty$ and $\xi' \in \R^{n-1}$. Above, $\tilde{B}^r_{k\chi}$ denotes the part of the operator $B^r_{k\chi}$ which has order $t_k-h_r-p_r$ (the {\bf principle part}).
\end{defin}

\begin{defin}\label{regularityoncoefficients}
We define the $h-\mu$-conditions on the coefficients above:

\noindent {\bf (1) The $a^j_{\chi\gamma}$ satisfy the $h-\mu$-conditions}, $0 < \mu < 1$, in some open $\Gamma$:
\begin{enumerate}
\item  if $|\gamma| = t_j + s_j - m_j$ and $|\chi| = m_j$ then $a^j_{\chi\gamma} \in C^{0,\mu}(\overline{\Gamma})$
\item if  $h - s_j + |\chi| > 0$ then $a^j_{\chi\gamma} \in C^{h-s_j + |\chi|, \mu}(\overline{\Gamma})$
\item else, the $a$s are in $C^{0,\mu}(\overline{\Gamma})$.
\end{enumerate}
\medskip
\noindent {\bf (2) The operators $B^r_{k\chi}$ satisfy the $h-\mu$-conditions}, $0 < \mu < 1$, in some open $\Gamma$, if $B^r_{k\gamma}(D_x, D_y, -)\in C^{h+h_r + p_r, \mu}( \overline{\Gamma\cap \{y_n = 0\}})$.
\end{defin}

With these definitions in mind, we can state Theorem 6.4.8 of \cite{morrey} (note the theorem in Morrey refers to a slightly more general class of elliptic systems). Our wording differs in order to comport with the notation used above.

\begin{thm}\label{morreytheorem1}[Theorem 6.4.8, \cite{morrey}]
Let $u^k, k=1,2$ satisfy an elliptic and coercive system of equations on $U$ (a neighborhood of $0$ in the upper half plane with $C^\infty$ boundary) with a proper assignment of weights $h_0, h_r, p_r, t_k, s_j, m_j$. Let $\Gamma \supset \overline{U}$ be an open domain. Suppose the $a$'s and the coefficients in the $B_{rk\gamma}$ satisfy the $h-\mu$-conditions on $\Gamma$, $0 < \mu < 1$, and suppose the {\rm a priori} estimates: $f_{\alpha}^j \in C^{\rho, \mu}(U)$, $\rho = \max\{0, h-s_j+|\alpha|\}, g^r_{\gamma}\in C^{\tau, \mu}(U)$ with $\tau = \max\{0, h+h_r + |\gamma|\}$ and $u^k \in C^{t_k + h, \mu}(U)$. Then \begin{equation}\label{weakholderestimate}\sum_k \|u^k\|_{C^{t_k + h, \mu}(U)} \leq C\left(\sum_{j,\alpha} \|f^j_\alpha\|_{C^{\rho,\mu}(U)} + \sum_{r,\gamma} \|g^r_{\gamma}\|_{C^{\tau,\mu}(U)} + \sum_{k}\|u^k\|_{C^0(U)}\right).\end{equation} Here $C$ is, again, independent of $u^k$ the $f$'s and the $g$'s.
\end{thm}

\subsection{Sharp $C^{1,\alpha}$ regularity and $C^{2,\alpha}$ regularity} It should be noted that in \cite{morrey} it is not explicitly made clear if Theorem \ref{morreytheorem1} applies when $h < h_0$ (nor if there should be additional restrictions on $h$). For the sake of completeness we include a proof of Theorem \ref{morreytheorem1} with $h_0 = 0, h = -1$ in Appendix \ref{sec: prooffornegativeh}. This is exactly the result we need to establish optimal $C^{1,\alpha}$ regularity. 

\begin{prop}\label{optimalregularity}
Let $\Omega \subset \R^n$ be a 2-sided NTA domain with $\log(h) \in C^{0,\alpha}(\partial \Omega), \alpha \in (0,1)$. In addition, if $n\geq 3$ also assume that $\Omega$ is $\delta$-Reifenberg flat, for $\delta > 0$ small, or that $\Omega$ is a Lipschitz domain. Then $\partial \Omega$ is locally the graph of a $C^{1, \alpha}$ function. 
\end{prop}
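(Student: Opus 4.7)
The plan is to prove Proposition \ref{optimalregularity} by combining the partial hodograph transform (already introduced above) with the Agmon--Douglis--Nirenberg/Morrey machinery of Theorem \ref{morreytheorem1}(2). From Proposition \ref{initialholderregularity} we know $\partial\Omega$ is locally a $C^{1,s}$ graph, and by Remark \ref{transformremarks} the associated functions $\psi,\phi\in C^{1,s}(\overline{U})$ with $\psi_n,\phi_n>0$. Moreover $\tilde h\in C^{0,\alpha}(\overline{U}\cap\{y_n=0\})$. The idea is to view equations \eqref{transformedequations}--\eqref{transformedboundary} as an elliptic system with coercive boundary operators for the pair $(\psi,\phi)$ and upgrade their regularity to $C^{1,\alpha}$.

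First I would put the system in the Douglis--Nirenberg framework by assigning weights $t_1=t_2=2$, $s_1=s_2=0$, $m_1=m_2=1$ in the interior, so that \eqref{transformedequations} are in divergence form with $|\chi|\le 1$ and $|\gamma|\le 1$. Since $\psi,\phi\in C^{1,s}$, the coefficients $1/\psi_n^2$, $\psi_i/\psi_n$, $\psi_i^2/\psi_n^2$ (and the analogues in $\phi$) lie in $C^{0,s}(\overline{U})$, so the $h$-$\mu$ conditions hold with $\mu = s$ and $h=-1$. A direct computation shows the principal symbols of the two interior equations are $-(1+|\nabla_{x'}\psi|^2)\partial_n^2 + 2\psi_n\sum_{i<n}\psi_i\partial_i\partial_n - \psi_n^2\Delta_{x'}$ and its $\phi$ analogue; both are uniformly elliptic thanks to $\psi_n,\phi_n>0$, so the block-diagonal matrix $M$ from Definition \ref{weakellipticsystem} is elliptic. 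For the boundary, the first condition $\phi+\psi=0$ has order zero and the second is a first-order linear combination of $\psi_n,\phi_n$ with $C^{0,\alpha}$ coefficient $\tilde h$; assigning $p_r=0$ and suitable $h_r$ makes the proper assignment of weights consistent.

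The technical heart is verifying the Lopatinski--Shapiro coerciveness condition \eqref{coerciveboundarycondition}. After freezing coefficients at a point $y_0=(y_0',0)$, the relevant constant-coefficient problem on the half-line reduces to looking for solutions $v^1,v^2$ of two second-order constant-coefficient elliptic equations in $y_n$ that decay as $y_n\to+\infty$ and satisfy $v^1(0)+v^2(0)=0$ together with the first-order linear boundary relation coming from the linearization of $\tilde h/\psi_n-1/\phi_n=0$. Each decaying solution is one-dimensional for fixed $\xi'\in\R^{n-1}$, so the two coefficients in the ansatz are determined by two linear boundary conditions; a direct determinant computation, using $\tilde h(y_0)>0$ and $\psi_n(y_0),\phi_n(y_0)>0$, shows the determinant is nonzero for every $\xi'\ne 0$. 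This is the step I expect to be the main obstacle, since one must carry out the symbol algebra carefully (and, since we intend to apply Theorem \ref{morreytheorem1}(2) with $h=-1$, which is not quite covered in Morrey's original statement, one must invoke the appendix's version of that estimate).

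Once ellipticity and coerciveness are in place, a bootstrap concludes the argument. Applying Theorem \ref{morreytheorem1}(2) (with $h=-1$, $t_k+h=1$) and $\mu=s$, the a priori estimate \eqref{weakholderestimate} gives $\psi,\phi\in C^{1,s}(U)$ with quantitative control, and this suffices because the right-hand sides in \eqref{transformedequations}--\eqref{transformedboundary} vanish (so all $f^j_\alpha=0$ and $g^r_\gamma$ is just $\tilde h$ and zero, which lie in the correct spaces). Replacing $\mu=s$ by $\mu=\alpha$ and rerunning the estimate, now using that the coefficients of \eqref{transformedequations} are $C^{0,\alpha}$ because $\psi,\phi$ are already $C^{1,s}$ (which gives enough regularity to make the coefficients lie in $C^{0,\min(s,\alpha)}$, then iterate until $\mu=\alpha$), yields $\psi,\phi\in C^{1,\alpha}(U)$. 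Since $\partial\Omega$ is locally parametrized by $(y',\psi(y',0))$, this gives the claimed $C^{1,\alpha}$ regularity of $\partial\Omega$.
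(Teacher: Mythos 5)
Your setup (partial hodograph transform, Douglis--Nirenberg weights $t_1=t_2=2, s_1=s_2=0, m_1=m_2=1$, use of Theorem \ref{morreytheorem1}(2) with $h=-1$ from the appendix) matches the paper's, and your plan for verifying ellipticity and coerciveness of the linearized system is essentially the paper's Step 3 and would go through. The gap is in the bootstrap step, and it is fatal as written.

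You propose to apply Theorem \ref{morreytheorem1}(2) directly to $\psi,\phi$ by freezing coefficients and then ``iterate until $\mu=\alpha$.'' But the frozen interior coefficients $a^j_{\chi\gamma}$ are nonlinear functions of $D\psi, D\phi$, which under the hypothesis $\psi,\phi\in C^{1,s}$ are only $C^{0,s}$ --- not $C^{0,\alpha}$, as you assert. The $h$-$\mu$-conditions with $h=-1$ require precisely that these leading-order coefficients lie in $C^{0,\mu}$, so you are capped at $\mu \le s$. Applying the theorem with $\mu=s$ reproduces $\psi,\phi\in C^{1,s}$ with no gain, and rerunning does not improve anything: you write $C^{0,\min(s,\alpha)}$, which is just $C^{0,s}$ when $s<\alpha$, so the ``iteration'' is stationary. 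There is no mechanism in your argument that makes the exponent climb.

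The paper resolves this by a finite-difference argument, not a direct application to $(\psi,\phi)$. One sets $u^{1,t}(x)=\psi(x+t)-\psi(x)$, $u^{2,t}(x)=\phi(x+t)-\phi(x)$ for $t=(t',0)$, and observes that, by integrating the derivative of $\vec{A}$ along the segment from $D\psi(x)$ to $D\psi(x+t)$, the pair $(u^{1,t},u^{2,t})$ satisfies a \emph{linear} elliptic/coercive system whose coefficients are still only $C^{0,s}$ but whose right-hand sides are small in $|t|$: the interior $f^i_j$ have $C^{0,s}$ norm $O(|t|^s)$ (products of two finite differences) and the boundary term $\tilde h(\cdot)-\tilde h(\cdot+t)$ has $C^{0,s}$ norm $O(|t|^{\alpha-s})$. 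Theorem \ref{morreytheorem1}(2) then yields $\|u^{k,t}\|_{C^{1,s}}\lesssim |t|^s+|t|^{\alpha-s}+|t|$, and the second-difference identity $D_j\psi(x+t)+D_j\psi(x-t)-2D_j\psi(x)=D_ju^{1,t}(x)-D_ju^{1,t}(x-t)$ combined with \cite{steinsingularintegrals}, Chapter 5, Proposition 8, upgrades $\psi$ to $C^{1,\min(2s,\alpha)}$. This genuinely gains ($s\mapsto\min(2s,\alpha)$), so iterating the finite-difference argument reaches $\alpha$ in finitely many steps. Without introducing the finite differences and the resulting smallness in $|t|$, your argument never leaves $C^{1,s}$.
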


\begin{proof}
Recall the functions $\phi, \psi$ which satisfy the system \eqref{transformedequations} with boundary conditions \eqref{transformedboundary}. For $t = (t', 0) \in \R^n$ we consider $u^{1,t}(x) \colonequals \psi(x+t) - \psi(x)$ and $u^{2,t}(x) \colonequals \phi(x+t) - \phi(x)$; our plan is to show that $u^{1,t}, u^{2,t}$ satisfy a system like the one in Definition \ref{weakellipticsystem}. Repeated applications of Theorem \ref{morreytheorem1} will then give the desired result. Our proof has three steps. 

\medskip

{\bf Step 1: constructing the elliptic and coercive system} Both $\phi$ and $\psi$ satisfy $$\mathrm{div} \vec{A}(Du) = 0$$ where $\vec{A}(Du)\colonequals \left(-\frac{u_1}{u_n}, -\frac{u_2}{u_n},..., \frac{1}{2}\left(\sum_{i=1}^{n-1} \left(\frac{u_i}{u_n}\right)^2 + \frac{1}{u_n^2}\right)\right)$. As such $$\mathrm{div} \int_0^1 \frac{d}{ds} \vec{A}(D(\psi(x) + s(\psi(x+t)-\psi(x)))) ds= 0\Rightarrow $$$$\mathrm{div} \int_0^1 a_{ij}(D(\psi(x) + s(\psi(x+t)-\psi(x))))D_iu^{1,t}(x)ds = 0$$ where $a_{ij}(\vec{p}) = \frac{d}{dp_j}A_i(\vec{p})$. $\phi$ and $u^{2,t}$ satisfy an analogous equation. Therefore, $u^{1,t}, u^{2,t}$ satisfy \eqref{weakformulation} with $a^1_{ij}(x) \colonequals a_{ij}(D\psi(x))$ and $$f^1_j\colonequals \sum_i\left(a_{ij}(D\psi(x))- \int_0^1 a_{ij}(D(\psi(x) + s(\psi(x+t)-\psi(x))))ds\right)D_iu^{1,t}$$ (and with corresponding definitions for $f^2, a^2$ in terms of $\phi$). Note $m_1= m_2 = 1, t_1 = t_2 = 2$ and $s_1 = s_2 = 0$. On the boundary $u^{1,t} + u^{2,t} = 0$ and $\frac{\tilde{h}(x)}{\phi_n(x+t)}D_nu^{2,t} - \frac{1}{\phi_n(x+t)}D_nu^{1,t} = \tilde{h}(x) - \tilde{h}(x+t)$. Therefore, $h_1 = 2, h_2 = 1$ and $p_1 = p_2 = 0$. Set $h_0 = 0$. It is then easy to see that this is a system with a proper assignment of weights. We will check in Step 3 that our system satisfies the ellipticity, coercivity and regularity conditions of Definition \ref{regularityoncoefficients}.

\medskip

{\bf Step 2: the iterative process} By Proposition \ref{initialholderregularity}, $u^{i,t} \in C^{1,s}(\overline{U})$. In particular, the $a^k_{ij}$'s and the $B$s satisfy the $h-\mu$-conditions with $h = -1$ and $\mu = s$. It is also easy to see that the $f$'s and $g$'s satisfy the conditions of Theorem \ref{morreytheorem1} (we assume, of course, that $\alpha \geq s$; otherwise the result is immediate). We conclude \begin{equation}\label{seconddifferenceestimates}
\|u^{i,t}\|_{C^{1,s}(\overline{U})} \leq K\left(\sum_{i=1}^2\sum_{j=1}^n \|f_j^i\|_{C^{0, s}(\overline{U})} + \|\tilde{h}(-)-\tilde{h}(-+t)\|_{C^{0, s}(\overline{U}\cap \{y_n = 0\})} + \sum_{k=1}^2 \|u^{k,t}\|_{C^0(\overline{U})}\right),
\end{equation} where $K$ is a constant independent of $t$. Some additional justification is needed here: in Theorem \ref{morreytheorem1} the constant may depend on the $C^{0,s}$ norm of the $a$'s and $B$'s. However, these coefficients have norms which can be bounded independently of $t$ and so $K$ may be taken to be independent of $t$.

For any $x,y\in U,$ \begin{equation}\label{tildehestimate}\begin{aligned}2\|\tilde{h}\|_{C^\alpha} |x-y|^s |t|^{\alpha-s} &\geq \min\{2|x-y|^\alpha \|\tilde{h}\|_{C^\alpha}, 2|t|^\alpha\|\tilde{h}\|_{C^{\alpha}}\}\\ &\geq |\tilde{h}(x) -\tilde{h}(x+t) - \tilde{h}(y) +\tilde{h}(y+t)|.\end{aligned}\end{equation} Thus $\|\tilde{h}(-)-\tilde{h}(-+t)\|_{C^{0, s}(\overline{U}\cap \{y_n = 0\})}\leq C|t|^{\alpha-s}$. We also claim that if $w,v\in C^{0,s}$ then $$\|(w(-) - w(-+t))(v(-)- v(-+t))\|_{C^{0,s}} \leq 4|t|^s\|w\|_{C^{0,s}}\|v\|_{C^{0,s}}$$ (this follows immediately from the triangle inequality and the fact that $\sup |w(-)-w(-+t)| < |t|^s\|w\|_{C^{0,s}}$). From here we conclude $\|f^i_j\|_{C^{0,s}(\overline{U})} \leq C|t|^s$. Plugging these estimates into \eqref{seconddifferenceestimates} we obtain $\|u^{i,t}\|_{C^{1,s}(\overline{U})} \leq K(|t|^s + |t|^{\alpha-s} + |t|)$ (as $\|u^{k,t}\|_{C^0(\overline{U})}\leq C|t|$). 

Therefore, for $j = 1,...,n$, we have that \begin{equation}\label{djpsiestimate}\begin{aligned}|D_j \psi(x+t) + D_j \psi(x-t) - 2D_j \psi(x)|&= |D_j u^{1,t}(x) - D_ju^{1,t}(x-t)|\\  &\leq \|u^{1,t}\|_{C^{1,s}} |t|^s \leq K(|t|^{2s} + |t|^\alpha).\end{aligned}\end{equation} This implies $\psi|_{\overline{U}\cap \{y_n = 0\}} \in C^{1, \beta}$ where $\beta = \min\{\alpha, 2s\}$ (see \cite{steinsingularintegrals}, Chapter 5, Proposition 8). Remark \ref{transformremarks} gives $\psi,\phi \in C^{1,\beta}(\overline{U})$.  Iterate until $\beta = \alpha$.

\medskip

{\bf Step 3: verifying the conditions of Definition \ref{weakellipticsystem}} It is easy to calculate the symmetric $(n\times n)$-matrix $$DA(\vec{p}) =  \left( \begin{array}{ccccc}
\frac{-1}{p_n} & 0 & 0 &\ldots & \frac{p_1}{p_n^2} \\
0 & \frac{-1}{p_n} & 0 &\ldots & \frac{p_2}{p_n^2} \\
\vdots & 0 & \ddots & \ldots & \vdots \\
\frac{p_1}{p_n^2} & \ldots & \frac{p_i}{p_n^2} & \ldots & -\left(\frac{1}{p_n}\right)^3\left(1+ \sum_{i=1}^{n-1} p_i^2\right)  \end{array} \right). $$ If $\vec{p} = D\phi, D\psi,$ then $p_n > 0$ in $\overline{U}$. Thus the matrices $DA(D\phi)$ and $DA(D\psi)$ are both elliptic (justifying our above use of the Schauder estimates) and the system is also elliptic (with the obvious weights $t_1 = t_2 = 2, s_1 = s_2 = 0$). Addtionaly, when $n=2$ we have the equation $$-\frac{1}{p_2} (\xi_1 + z\eta_1)^2 + 2\frac{p_1}{p_2^2}(\xi_1 + z\eta_1)(\xi_2 + z\eta_2) - \frac{1}{p_2^3}(1+p_1^2)(\xi_2+z\eta_2)^2 = 0.$$ All the coefficients of this polynomial are real, so if $\alpha, \beta$ are its roots it must be the case that $\alpha = \overline{\beta}$ which is exactly the desired result. 

We must check coercivity at an arbitrary $y_0 \in \overline{U} \cap \{y_n = 0\}$. If $u^1 = e^{i y' \cdot \xi'} \tilde{u}^1(y_n)$ solves $a_{ij}(D\psi(y_0))D_{ij} u^1 = 0$ then $\tilde{u}^1(y_n)$ is a linear combination of functions of the form $e^{ry_n}$ where $r$ is a root of  $$\frac{\sum |\xi'|^2}{p_n} + 2\frac{p_j}{p_n^2}\sum i \xi'_j x- \frac{1}{p_n^3}(1+\sum p_i^2)x^2 = 0.$$ This equation has at most one root, call it $r_1$, with strictly negative real part (as the sum of the roots is purely imaginary). That $\tilde{u}^1(y_n)\rightarrow 0$ as $y_n \rightarrow \infty$ implies $\tilde{u}^1(y_n) = \alpha_1 e^{y_n r_1}$. Similarly, we define $\tilde{u}^2(y_n)$ and conclude $\tilde{u}^2(y_n) = \alpha_2e^{y_n r_2}$, where $r_2$ has strictly negative real part (if such an $r_1$ or $r_2$ does not exist then we are done). 

As $u^1 + u^2 = 0$ on the boundary it must be true that $\alpha_1 + \alpha_2 = 0$. Furthermore $$\tilde{h}(0)D_nu^2 - D_nu^1 = 0\Rightarrow$$$$\tilde{h}(0)\alpha_2r_2 - \alpha_1r_1 = 0 \Rightarrow \tilde{h}(0)r_2 + r_1 = 0.$$ But $\tilde{h}(0)r_2$ has strictly negative real part and $r_1$ has strictly negative real part, so their sum must have strictly negative real part and the system is coercive. 
\end{proof}

 If $\log(h) \in C^{k,\alpha}$ for $k \geq 1$, the above argument can be modified slightly to give that $\partial \Omega$ is locally the graph of a $C^{2,\frac{\alpha}{2+\alpha}}$ function. 

\begin{prop}\label{c2regularity}
Let $\partial \Omega$ be a 2-sided NTA domain with $\log(h) \in C^{1,\alpha}(\partial \Omega)$ for $0 < \alpha < 1$. If $n \geq 3$ also assume either that $\Omega$ is $\delta$-Reifenberg flat for $\delta > 0$ small or that $\Omega$ is a Lipschitz domain. Then $\partial \Omega$ is locally the graph of a $C^{2, \frac{\alpha}{2+\alpha}}$ function.
\end{prop}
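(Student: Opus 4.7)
The plan is to differentiate the transformed system \eqref{transformedequations}--\eqref{transformedboundary} in the tangential directions and apply Theorem \ref{morreytheorem1} to the resulting linear elliptic system. The starting point is the remark immediately following the proof of Proposition \ref{optimalregularity}: because $\log(h) \in C^{1,\alpha}(\partial\Omega)$ in particular satisfies the hypotheses of that proposition, the argument there actually yields $\phi, \psi \in C^{1,1}(\overline{U}) \subset W^{2,\infty}(U)$, so it is legitimate to differentiate \eqref{transformedequations}--\eqref{transformedboundary} almost everywhere.

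First I would fix a tangential index $k \in \{1,\ldots,n-1\}$ and set $v \colonequals \partial_k \psi$, $w \colonequals \partial_k \phi$. Differentiating the divergence-form interior equations in the $y_k$ direction gives
\begin{equation*}
\partial_i\bigl(a_{ij}(D\psi)\,\partial_j v\bigr) = 0, \qquad \partial_i\bigl(a_{ij}(D\phi)\,\partial_j w\bigr) = 0,
\end{equation*}
with $a_{ij}(p) = \partial_{p_j} A_i(p)$ exactly as in the proof of Proposition \ref{optimalregularity}. Differentiating the boundary relations \eqref{transformedboundary} tangentially produces
\begin{equation*}
v + w = 0, \qquad \frac{\tilde{h}}{\psi_n^2}\, v_n - \frac{1}{\phi_n^2}\, w_n = \frac{\partial_k \tilde{h}}{\psi_n}
\end{equation*}
on $\overline{U}\cap\{y_n = 0\}$. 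Since $\phi, \psi \in C^{1,\alpha}(\overline{U})$, every one of the coefficients $a_{ij}(D\psi)$, $a_{ij}(D\phi)$, $\tilde{h}/\psi_n^2$, $1/\phi_n^2$ lies in $C^{0,\alpha}(\overline{U})$, and, crucially, the boundary forcing $\partial_k \tilde{h}/\psi_n$ lies in $C^{0,\alpha}$ precisely because $\log(h) \in C^{1,\alpha}$.

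I would then assign the weights $t_k = 2$, $s_j = 0$, $m_j = 1$, $h_1 = 2$, $h_2 = 1$, $p_r = 0$, $h_0 = 0$, verify ellipticity and coercivity by exactly the computations at the end of the proof of Proposition \ref{optimalregularity} (the principal symbols $DA(D\psi)$, $DA(D\phi)$ and the sign structure at the boundary are unchanged by tangential differentiation), and apply Theorem \ref{morreytheorem1} part (2) with $h = -1$, $\mu = \alpha$ (via the extension in Appendix \ref{sec: prooffornegativeh}) to conclude $v, w \in C^{1,\alpha}(\overline{U})$ with bounds independent of $k$. This shows that $\partial_i\partial_k\psi, \partial_i\partial_k\phi \in C^{0,\alpha}(\overline{U})$ for all $i$ and every $k < n$---i.e.\ every second derivative of $\psi$ and $\phi$ other than the pure normal ones.

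Finally I would recover $\psi_{nn}, \phi_{nn}$ algebraically from the PDE itself. Writing \eqref{transformedequations} in the non-divergence form $a_{ij}(D\psi)\psi_{ij} = 0$ and using that $a_{nn}(D\psi) = -(1 + \sum_{i<n}\psi_i^2)/\psi_n^3$ is bounded away from zero (since $\psi_n > 0$ in $\overline{U}$ by Remark \ref{transformremarks}), one can solve for $\psi_{nn}$ as a rational expression in $D\psi$ and the remaining second derivatives $\psi_{ij}$ with $(i,j) \neq (n,n)$, all of which have just been shown to be $C^{0,\alpha}$. Hence $\psi_{nn} \in C^{0,\alpha}(\overline{U})$ and likewise $\phi_{nn} \in C^{0,\alpha}(\overline{U})$, giving $\psi, \phi \in C^{2,\alpha}(\overline{U})$. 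Since $(y', \psi(y',0))$ locally parameterizes $\partial\Omega$, the boundary is locally a $C^{2,\alpha}$ graph. The main obstacle is verifying the structural hypotheses of Definition \ref{weakellipticsystem}---proper assignment of weights, ellipticity, and especially coercivity---for the differentiated system, together with justifying the application of Theorem \ref{morreytheorem1} at the borderline index $h = -1 < h_0$; both parallel the corresponding verifications in Proposition \ref{optimalregularity} but require care with the bookkeeping.
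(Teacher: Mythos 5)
Your overall strategy---differentiate the hodograph system tangentially, apply the coercive-elliptic estimate to $(v,w)=(\partial_k\psi,\partial_k\phi)$, and then recover $\psi_{nn},\phi_{nn}$ algebraically from the non-divergence-form PDE---is cleaner than the paper's finite-difference argument, but as written it has a genuine gap at the single most important step.

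Theorem \ref{morreytheorem1} part (2), both as stated and as proved in Appendix \ref{sec: prooffornegativeh}, is an \emph{a priori} estimate: it presupposes $u^k\in C^{t_k+h,\mu}(U)$ and then bounds that norm. It is not, by itself, a regularity theorem (contrast with the bootstrapping clause explicitly attached to part (1)). With your weights $t_k=2$, $h=-1$, $\mu=\alpha$, the hypothesis you must check before applying the estimate to $v=\partial_k\psi$ is precisely $v\in C^{1,\alpha}(\overline{U})$, i.e.\ $\partial_i\partial_k\psi\in C^{0,\alpha}$ for all $i$. But the only a priori information available at this stage is $\psi,\phi\in C^{1,1}(\overline{U})$, which gives $v\in C^{0,1}$ only. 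So you are invoking the estimate under exactly the hypothesis you set out to prove; the argument is circular. (Note that Proposition \ref{optimalregularity} avoids this trap: there the a priori hypothesis ``$u^{i,t}\in C^{1,s}$'' is legitimately supplied by Proposition \ref{initialholderregularity} applied to the undifferenced functions, since $u^{i,t}=\psi(\cdot+t)-\psi$ inherits the regularity of $\psi$ itself with bounds uniform in $t$. Differentiating rather than differencing destroys this feature: $\partial_k\psi$ is one derivative worse than $\psi$.)

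The paper's proof is built specifically to sidestep this. It works with the undivided differences $u^{k,t}=\psi(\cdot+t)-\psi$ (which lie in $W^{2,\infty}$ with $t$-uniform norm, so the a priori hypothesis for part (1) with $h=0$ is available), first extracts $\|u^{k,t}\|_{W^{2,q}}\lesssim|t|^\alpha$ via the $L^q$ estimate and its bootstrapping clause, passes through Sobolev embedding and a second-difference argument to reach $C^{2,\alpha-\varepsilon}$, and only then applies part (2) with $\mu<\alpha$ (now legitimately, since $u^{k,t}\in C^{2,\mu}$ is known) and iterates up to the sharp exponent. Your tangential differentiation could be salvaged by inserting a similar two-stage bootstrap---first establishing via part (1) and Sobolev that $v\in C^{1,\beta}$ for some $\beta>0$ before invoking part (2)---but then the argument essentially collapses back into the paper's, so the directness you were aiming for is illusory. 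The closing algebraic recovery of $\psi_{nn}$ from $a_{nn}(D\psi)\neq 0$ is fine and matches what one would do, but it rests on the unproven $C^{1,\alpha}$ regularity of $v$.
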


\begin{proof}
We follow the proof of Proposition \ref{optimalregularity}; consider again $u^{1,t}, u^{2,t}$.  We have already shown these functions satisfy an elliptic system with coercive boundary conditions.  Note, by Proposition \ref{optimalregularity}, $u^{i,t} \in C^{1,s}(\overline{U})$ for all $s\in (0,1)$. In particular, the $a_{ij}^k$'s and the $B$s satisfy the $h-\mu$-conditions with $h = -1$ and $\mu = s\in (0,1)$ to be choosen later. Furthermore, the $f$'s and $g$'s satisfy the conditions of Theorem \ref{morreytheorem1}.

Follow {\bf Step 2} in the proof of Proposition \ref{optimalregularity} until we reach \eqref{tildehestimate}. Here we need an estimate which incorporates the higher regularity of $\tilde{h}$. By Remark \ref{transformremarks}, $\tilde{h} \in C^{1,\alpha}(\overline{U}\cap \{y_n =0\})$. For any $x, y\in \mathbb R^n$ write, for the sake of brevity, $$\delta^2_y f(x) \equiv f(x+y) + f(x-y) - 2f(x).$$ We can then estimate, for $x, y\in \overline{U}\cap \{y_n =0\}$, \begin{equation}\label{secondhtildeestimate}\begin{aligned} |\delta^2_y\tilde{h}(x + t) -\delta_y^2 \tilde{h}(x)| &\leq \|\tilde{h}\|_{C^{1+\alpha}}\min\{3|t|, 2|y|^{1+\alpha}\}\\
& \leq C\|\tilde{h}\|_{C^{1+\alpha}} |y|^s|t|^{1-\frac{s}{1+\alpha}}.\end{aligned}\end{equation} Consequently, $\|\tilde{h}(-) - \tilde{h}(-+t)\|_{C^{0,s}} \leq C|t|^{1-\frac{s}{1+\alpha}}.$

Proceed as in {\bf Step 2} of the proof of Proposition \ref{optimalregularity} until we reach \eqref{djpsiestimate}, which now reads  \begin{equation}\label{djpsiestimate2}\begin{aligned}|D_j \psi(x+t) + D_j \psi(x-t) - 2D_j \psi(x)|&= |D_j u^{1,t}(x) - D_ju^{1,t}(x-t)|\\  &\leq \|u^{1,t}\|_{C^{1,s}} |t|^s \leq K(|t|^{2s} + |t|^{1+s-\frac{s}{1+\alpha}}).\end{aligned}\end{equation} Pick $s\in (0,1)$ such that $$1+s -\frac{s}{1+\alpha} = 2s\Rightarrow s  = \frac{1+\alpha}{2+\alpha}.$$ Then $\psi|_{\overline{U}\cap \{y_n = 0\}} \in C^{2, \frac{\alpha}{2+\alpha}}$. By Remark \ref{transformremarks} we can conclude that $u \in C^{2, \frac{\alpha}{2+\alpha}}(\overline{\Omega})$ and, ergo, $\psi, \phi \in C^{2, \frac{\alpha}{2+\alpha}}(\overline{U})$. 
\end{proof}

\subsection{Higher regularity} Once we have shown $\phi, \psi \in C^{2,s}(\overline{U})$ for some $s\in (0,1)$, we can apply classical non-linear ``Schauder" type estimates (which require the $C^{2,s}$ {\it a priori} assumption). First we need to define a non-linear elliptic and coercive system. 

\begin{defin}\label{nonlinearsystem}
Let $u^k, k = 1,2$ satisfy \begin{equation}\label{nonlinearinterior}
\begin{aligned}
F_1(y, u^1, u^2, Du^1, Du^2...,D^{t_1+s_1}u^1, D^{t_2+s_1}u^2) &= 0, y\in U\\
F_2(y, u^1, u^2, Du^1, Du^2...,D^{t_1+s_2}u^1, D^{t_2+s_2}u^2) &=0, y \in U\end{aligned}\end{equation}
 and on the boundary satisfy \begin{equation}\label{nonlinearboundary}\begin{aligned}
B_1(y, u^1, u^2, Du^1, Du^2...,D^{t_1-h_1}u^1, D^{t_2-h_1}u^2) &= 0, y\in \overline{U}\cap \{y_n =0\}\\
B_2(y, u^1, u^2, Du^1, Du^2...,D^{t_1-h_2}u^1, D^{t_2-h_2}u^2) &= 0, y\in \overline{U}\cap \{y_n =0\}.\end{aligned}\end{equation} Where, $\max s_i = 0$ and $\min\{t_k + s_i\}, \min\{t_k - h_i\} \geq 0$. 

For a solution, $v$, to \eqref{nonlinearinterior}, we say that the system is {\bf elliptic} along $v$ at a point $y_0 \in U$ if the linear system \begin{equation}\label{variationinterior}\begin{aligned} L^1_1(y_0,D)\phi^1 + L^2_1(y_0,D)\phi^2 &= \frac{d}{dt}F_1(y_0, v^1+ t\phi^1,..., D^{t_2+s_1}(v^2+t\phi^2))|_{t=0}\\
L^1_2(y_0, D)\phi^1 + L^2_2(y_0,D)\phi^2 &= \frac{d}{dt}F_2(y_0, v^1+ t\phi^1,..., D^{t_2+s_2}(v^2+t\phi^2))|_{t=0}\end{aligned}\end{equation} is elliptic. That is to say, if the block matrix $A$, where $A_{ij} = \tilde{L}^i_j$, is elliptic. Here $\tilde{L}^i_j$ is the principle part of the operator $L$ (for more details see Definition 3.1, Chapter 6 of \cite{kinderlehrerstampacchia}). 

For a solution, $v$, to both equations \eqref{nonlinearinterior} and \eqref{nonlinearboundary} we say that the boundary conditions are {\bf coercive} along $v$ at a point $y_0 \in \overline{U}\cap \{y_n=0\}$ if the linear boundary conditions \begin{equation}\label{variationboundary}\begin{aligned} \Phi^1_1(y_0,D)\phi^1 + \Phi^2_1(y_0,D)\phi^2 &= \frac{d}{dt}B_1(y_0, v^1+ t\phi^1,..., D^{t_2-h_1}(v^2+t\phi^2))|_{t=0}\\
\Phi^1_2(y_0, D)\phi^1 + \Phi^2_2(y_0,D)\phi^2 &= \frac{d}{dt}B_2(y_0, v^1+ t\phi^1,..., D^{t_2-h_2}(v^2+t\phi^2))|_{t=0}\end{aligned}\end{equation} are coercive for the \eqref{variationinterior} (see Definition \ref{weakellipticsystem}, above, for the definition of coercive linear boundary values. See Definition 3.2, Chapter 6 in \cite{kinderlehrerstampacchia} for more details).  Note in all of the above $D^nv$ is short hand for all $n$th-order derivatives of $v$. 
\end{defin}

We now recall the Schauder estimates for non-linear elliptic systems. 

\begin{thm}\label{adntheorem}[see Theorem 12.2, \cite{adn2}, Theorem 3.3 in Chapter 6, \cite{kinderlehrerstampacchia} and Chapter 6.8, \cite{morrey}]
Assume $u^k, k = 1,2$ satisfy an elliptic and coercive non linear system with proper weights like in Definition \ref{nonlinearsystem}. Let $0 < \alpha < 1$ and $\ell_0  = \max (0, -h_r)$ and assume $u^k \in C^{\ell_0 + t_k, \alpha}(\overline{U})$ for $k = 1,2$. Then for any $\ell \geq \ell_0$ if $F_i \in C^{\ell - s_i, \alpha}$ and $B_r \in C^{\ell +h_r, \alpha}$, in all arguments, then $u^k \in C^{\ell + t_k, \alpha}(\overline{U})$. 

Additionally if $F, G$ are $C^{\infty}$ (analytic)  functions in all of their arguments then $u^k$ is $C^{\infty}$ (analytic). 
\end{thm}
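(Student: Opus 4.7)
The plan is to induct on $\ell \geq \ell_0$, reducing each step to the linear Schauder estimate of Theorem~\ref{morreytheorem1}(2) via a finite-difference device in the spirit of Propositions~\ref{optimalregularity} and~\ref{c2regularity}. The base case $\ell = \ell_0$ is the hypothesis. For the inductive step, assume $u^k \in C^{\ell + t_k, \alpha}(\overline U)$; fix a tangential direction $e_j$, $1 \leq j \leq n-1$, and set $u^{k,t}(y) \colonequals u^k(y+te_j) - u^k(y)$. Subtracting \eqref{nonlinearinterior}--\eqref{nonlinearboundary} at $y$ and $y+te_j$ and writing the result as the integral of $\frac{d}{ds}F_i$ and $\frac{d}{ds}B_r$ along the segment joining the two argument-points produces a linear system
\[
L_i^1(y,D)u^{1,t} + L_i^2(y,D)u^{2,t} = f_t^i(y), \qquad \Phi_r^1(y,D)u^{1,t} + \Phi_r^2(y,D)u^{2,t} = g_t^r(y),
\]
whose principal parts are precisely the averaged linearizations \eqref{variationinterior}--\eqref{variationboundary} along that segment. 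By hypothesis the exact linearization at $v(y) = (u^1(y), u^2(y))$ is elliptic and coercive with the given weights; the averaged version inherits these properties for $|t|$ small by continuity in the frozen argument. The regularity of $F_i, B_r$ in all their arguments together with the inductive hypothesis $v^k \in C^{\ell+t_k,\alpha}$ places the coefficients in the $h$-$\mu$-class required by Theorem~\ref{morreytheorem1}(2) with $h = \ell$, while the assumed regularity of $F_i$ in $y$ (class $C^{\ell - s_i,\alpha}$) and of $B_r$ in $y$ (class $C^{\ell + h_r,\alpha}$) yields $\|f_t^i\|_{C^{\ell - s_i,\alpha}},\; \|g_t^r\|_{C^{\ell + h_r,\alpha}} \lesssim |t|$.

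Applying the linear Schauder estimate now yields $\|u^{k,t}\|_{C^{t_k + \ell, \alpha}(\overline U)} \leq C|t|$ uniformly in small $|t|$, so dividing by $|t|$ and sending $t \to 0$ gives $D_{e_j} u^k \in C^{\ell + t_k, \alpha}$ for each tangential $e_j$. To upgrade this to $u^k \in C^{\ell + 1 + t_k, \alpha}$, I would use \eqref{nonlinearinterior} itself: ellipticity along $v$ means the coefficient of the top-order normal derivative $D_n^{t_k + s_i} u^k$ in the linearized system is invertible, so the top normal derivative at order $\ell + 1 + t_k$ can be solved algebraically from already-controlled tangential derivatives and lower-order terms, and one iterates through all orders of normal differentiation. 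This closes the step $\ell \to \ell + 1$, and repeated application gives the first assertion. For the $C^\infty$ case, one simply iterates indefinitely. For analyticity, I would pass to the classical method of majorants on nested shrinking subdomains (Morrey, Chapter~6.7--6.8): analyticity of $F$ and $B$ in all arguments, combined with the elliptic-coercive structure, yields Cauchy-type bounds on $\|D^\beta u^k\|_{C^{0,\alpha}(U')}$ that close a geometric induction on $|\beta|$.

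The main obstacle I foresee is the weight bookkeeping: every occurrence of ``lower-order term'' in the differenced equations must be unpacked against the ADN weights $(t_k, s_i, h_r, p_r)$ to verify that the resulting regularity indices really are $(\ell - s_i, \ell + h_r)$, as demanded by Theorem~\ref{morreytheorem1}(2). The proper-weights structure of Definition~\ref{weakellipticsystem} guarantees this works out --- differentiating $F_i$ in its $D^\gamma u^k$-argument, with $|\gamma|$ ranging up to $t_k + s_i$, produces coefficients whose order in the linearized operator is exactly $t_k + s_i - m_i$ on $D^\gamma u^k$ of top order $m_i$, matching the framework of Theorem~\ref{morreytheorem1} --- but the verification is mechanical and tedious. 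A related subtlety in the analytic case is quantifying the dependence of the constant in Theorem~\ref{morreytheorem1} on the $C^{0,\alpha}$-norms of the coefficients, since this determines the geometric rate at which the majorant series converges on the nested subdomains.
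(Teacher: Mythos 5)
The paper does not prove this theorem; it is stated as a black-box citation to Agmon--Douglis--Nirenberg (Theorem 12.2 of \cite{adn2}), Kinderlehrer--Stampacchia (Chapter 6, Theorem 3.3), and Morrey (Chapter 6.8), and the paper's own contribution in Section~\ref{sec: higherholder} consists merely in verifying that the hodograph-transformed system satisfies its hypotheses (ellipticity, coercivity, proper weights, and the regularity of $F_i$ and $B_r$). So there is no ``paper's proof'' to compare against.

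That said, your sketch captures the classical outline (tangential finite differences $\to$ linear Schauder estimate $\to$ algebraic recovery of normal derivatives $\to$ iterate), and the analytic case via majorants on nested subdomains is correct in spirit. There is, however, a concrete soft spot: the assertion $\|f_t^i\|_{C^{\ell - s_i,\alpha}} \lesssim |t|$ does not hold at the borderline regularity $F_i \in C^{\ell-s_i,\alpha}$. If you difference the $y$-dependence of a function that is exactly $C^{m,\alpha}$, the $C^{m,\alpha}$ seminorm of the difference is only $O(1)$, not $O(|t|)$ or even $O(|t|^\alpha)$; the gain appears one derivative below, and only at the rate $|t|^\alpha$. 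This is exactly why the paper's own Propositions~\ref{optimalregularity} and~\ref{c2regularity} work with $h = h_0 - 1$, obtain fractional bounds $|t|^{s}$, $|t|^{\alpha - s}$ on the right-hand side, and then use the second-difference estimate (via Stein's characterization of $C^{1,\beta}$) together with an iteration in the exponent to close the gap. Your induction as written applies Theorem~\ref{morreytheorem1}(2) one rung too high; the correct scheme either applies it at level $h = m - 1$ to pass from $C^{m+t_k,\alpha}$ to $C^{m+1+t_k,\alpha}$ (using the extra derivative $F_i \in C^{m+1-s_i,\alpha} \supset C^{\ell-s_i,\alpha}$ to get the genuine $O(|t|)$ bound), or works with second differences and an exponent iteration as in the paper. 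These are fixable bookkeeping issues, but as stated your proof of the key estimate is not correct.
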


Our main theorem follows:

\begin{thm*}[Main Theorem]
Let $\Omega$ be a 2-sided NTA domain with $\log(h) \in C^{k,\alpha}(\partial \Omega)$ where $k \geq 0$ is an integer and $\alpha \in (0,1)$. Then:
\begin{itemize}
\item when $n =2$: $\partial \Omega$ is locally given by the graph of a $C^{k+1,\alpha}$ function.
\item when $n\geq 3$: there is some $\delta_n > 0$ such that if $\delta < \delta_n$ and $\Omega$ is $\delta$-Reifenberg flat or if $\Omega$ is a Lipschitz domain then $\partial \Omega$ is locally given by the graph of a $C^{k+1, \alpha}$ function.
\end{itemize}
Similarly, if $\log(h) \in C^{\infty}$ or $\log(h)$ is analytic we can conclude (under the same flatness assumptions above) that $\partial \Omega$ is locally given by the graph of a $C^\infty$ (resp. analytic) function. 
\end{thm*}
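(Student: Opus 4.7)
The plan is to bootstrap the regularity of $\phi$ and $\psi$ (the partial hodograph transforms of $u^+$ and $u^-$) using the non-linear Schauder theory recorded in Theorem \ref{adntheorem}. The cases $k=0$ and $k=1$ of the statement are precisely Propositions \ref{optimalregularity} and \ref{c2regularity} (the Reifenberg-flatness or Lipschitz hypothesis for $n\geq 3$ having already been absorbed into those base cases). It therefore remains only to treat $k\geq 2$ and the $C^\infty$ and analytic endpoints.

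First I would recall the nonlinear elliptic system \eqref{transformedequations}--\eqref{transformedboundary} satisfied by $(\phi,\psi)$, together with the weights $t_1=t_2=2$, $s_1=s_2=0$, $h_1=2$, $h_2=1$, $p_1=p_2=0$ already used in the proof of Proposition \ref{optimalregularity}. Under this assignment $\ell_0 = \max(0,-h_r) = 0$. The interior nonlinearities $F_1,F_2$ are rational in $D\psi,D^2\psi$ (resp.\ $D\phi,D^2\phi$) with the nonvanishing denominators $\psi_n^2,\phi_n^2$, and the boundary operators read
\[
B_1(u^1,u^2) = u^1+u^2, \qquad B_2(y,u^1,u^2,Du^1,Du^2) = \frac{h(y',u^1)}{u^1_n} - \frac{1}{u^2_n}.
\]
Ellipticity of the linearized interior system and coercivity of the linearized boundary system along $(\phi,\psi)$ were verified in the proof of Proposition \ref{optimalregularity}, and they continue to apply here because $\phi_n,\psi_n>0$ on $\overline U$ (Remark \ref{transformremarks}). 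Thus $F_i$ and $B_1$ are $C^\infty$ in all of their arguments, while the regularity of $B_2$ in its arguments is exactly the regularity of the Whitney-extended $h\in C^{k,\alpha}(\R^n)$ promised by the hypothesis $\log(h)\in C^{k,\alpha}(\partial\Omega)$.

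For the inductive step, fix $k\geq 2$. Proposition \ref{c2regularity} supplies the required initial regularity $\phi,\psi \in C^{\ell_0+t_k,\alpha}(\overline U) = C^{2,\alpha}(\overline U)$. I would then apply Theorem \ref{adntheorem} with $\ell = k-1 \geq 1 \geq \ell_0$: the requirement $F_i \in C^{\ell-s_i,\alpha}$ is automatic, $B_1 \in C^{\ell+h_1,\alpha}$ is automatic, and $B_2 \in C^{\ell+h_2,\alpha} = C^{k,\alpha}$ in all arguments follows from $h\in C^{k,\alpha}(\R^n)$ together with the smoothness of the rational parts. The conclusion is $\phi,\psi \in C^{\ell+t_k,\alpha}(\overline U) = C^{k+1,\alpha}(\overline U)$; since $\partial\Omega$ is locally parametrized by $y'\mapsto (y',\psi(y',0))$ (Remark \ref{transformremarks}), this gives the desired $C^{k+1,\alpha}$ graph representation. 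The $C^\infty$ statement follows by iterating the argument over every $k$, and the analytic statement follows from the analytic-regularity clause of Theorem \ref{adntheorem}.

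The main---and in fact the only---obstacle is the bookkeeping check that the structural hypotheses of Definition \ref{nonlinearsystem} and Theorem \ref{adntheorem} genuinely apply at each stage, especially that $B_2$, whose dependence on $\psi$ runs through the composition $h(y',\psi)$, inherits the full $C^{k,\alpha}$ dependence on all of its arguments whenever $\log(h)$ does. Once one treats the $u^1$-slot of $B_2$ as an independent variable of the Whitney-extended function $h$, this is immediate. The genuine difficulty of the paper lies upstream, in the earlier sections that establish pointwise and uniform non-degeneracy of $\Theta^{n-1}(\omega^\pm,Q)$ and eventually the $C^{2,\alpha}$ base case; once that input is in hand, the higher-regularity theorem is a clean application of classical elliptic-systems theory.
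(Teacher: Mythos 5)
Your proposal reproduces the paper's own argument: identify the nonlinear hodograph system with weights $t_1=t_2=2$, $s_1=s_2=0$, $h_1=2$, $h_2=1$, reuse the ellipticity/coercivity check already performed in Proposition \ref{optimalregularity}, feed the $C^{2,\alpha}$ base case from Proposition \ref{c2regularity} into Theorem \ref{adntheorem} with $\ell=k-1$, and read off $\phi,\psi\in C^{k+1,\alpha}$ (and the $C^\infty$/analytic variants). The only addition beyond what the paper says explicitly is the observation that $B_2$'s dependence on $\psi$ should be read through the Whitney-extended $h$, which is a correct and welcome clarification but not a different route.
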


\begin{proof}
For $k = 0$ this result is contained in Proposition \ref{optimalregularity}. For $k = 1$ Proposition \ref{c2regularity} tells us that $\partial \Omega$ is $C^{2,s}$, $u^\pm \in C^{2,s}(\overline{\Omega}^{\pm})$ for some $0 < s < \alpha$. Theorem \ref{adntheorem}, applied as below, combined with a standard difference quotient argument, like the ones above, gives the optimal regularity; $\partial \Omega$ given by the graph of a $C^{2,\alpha}$ function and $u^\pm \in C^{2,\alpha}(\overline{\Omega}^{\pm})$

Let $k \geq 2$, and set $\ell_0 = 0, \ell = k-1, t_1 = t_2 = 2, s_1 = s_2 = 0$ and $h_1 = 2, h_2 = 1$. First, we will show that $\psi, \phi$ satisfy an elliptic and coercive non-linear system with the above weights (as defined in Definition \ref{nonlinearsystem}). This same method also works to prove $C^\infty$ or analytic regularity. 

Recall both $\phi$ and $\psi$ satisfy $$\mathrm{div} \vec{A}(Du) = 0$$ where $\vec{A}(Du)\colonequals \left(-\frac{u_1}{u_n}, -\frac{u_2}{u_n},..., \frac{1}{2}\left(\sum_{i=1}^{n-1} \left(\frac{u_i}{u_n}\right)^2 + \frac{1}{u_n^2}\right)\right)$. Therefore, the associated linear system at $y_0$ is $L^1_1v^1 = \frac{d}{dp_i}A_j(\psi(y_0))v^1_{ij}, L^2_1 \equiv 0, L_2^1 \equiv 0$ and $L^2_2v^2 = \frac{d}{dp_i}A_j(\phi(y_0))v^2_{ij}$. We have already established, in the proof of Proposition \ref{optimalregularity}, that this is an elliptic system. 

We have $B_1(y, \psi,\phi,....) =\phi + \psi$ and $B_2(y, \psi, \phi,...) = h((y',\psi(y)))\phi_n - \psi_n$, which are unchanged by linearization. Again, in the proof of Proposition \ref{optimalregularity}, we have shown that these boundary conditions are coercive for the above linear equations. Furthermore, the above values give a proper assignment of weights. 

Finally, $F_1, F_2, B_1$ are analytic in all arguments (recall that $\psi_n, \phi_n \neq 0$ in $U$) and $B_2$ is analytic in $D\psi, D\phi$ but has the same regularity in $y$ and $\psi$ that $h$ has in $x$. By assumption, $h\in C^{k,\alpha} = C^{\ell + h_2, \alpha}$ so $B_2$ has the desired regularity. Additionally, by Proposition \ref{c2regularity}, $u$ has the required initial smoothness. Thus, applying Theorem \ref{adntheorem} yields the desired result. 
\end{proof}

\appendix
\section{Proof of Theorem \ref{morreytheorem1} for $h < h_0$}\label{sec: prooffornegativeh}

Let us recall the statement we are trying to prove:

\medskip
\noindent {\bf Theorem \ref{morreytheorem1}} {\it 
Let $u^k, k =1,2$ satisfy a system of coercive and elliptic equations with proper weights. Suppose the coefficients in\eqref{weakformulation} and the $B_{rk\gamma}$ satisfy the $h-\mu$-conditions on a domain $\Gamma \supset U$, where $0 < \mu < 1$. Additionally, assume the following regularity: $f^{\alpha}_j \in C^{\rho, \mu}(U)$, $\rho = \max\{0, h-s_j+|\alpha|\}, g_{r\gamma}\in C^{\tau, \mu}(U)$ with $\tau = \max\{0, h+h_r + |\gamma|\}$ and $u^k \in C^{t_k + h, \mu}(U)$. Then \begin{equation}\label{weakholderestimate2}\sum_k \|u^k\|_{C^{t_k + h, \mu}(U)} \leq C\left(\sum_{j,\alpha} \|f_j^\alpha\|_{C^{\rho,\mu}(U)} + \sum_{r,\gamma} \|g_{r\gamma}\|_{C^{\tau,\mu}(U)} + \sum_{k}\|u^k\|_{C^0(U)}\right).\end{equation} Here $C$ is independent of the $u^k$'s, the $f$'s and the $g$'s.}

\medskip

For simplicity's sake, we establish the above in the special case where $h_0 = 0, h = -1, t_1 = t_2 = 2, s_1 = s_2 = 0$ and $p_1 = p_2 = 0$ (which is the case that is applied in the proof of Proposition \ref{optimalregularity}). However, our techniques work for $h_0 \geq 0, h \geq h_0-1$ and any proper assignment of weights. To further simplify the proof, we will make the assumptions that $U$ is bounded and that $u^k \in C^\infty(\overline{U}\backslash \{y_n = 0\})$, i.e. that $u^k$ is infinitely smooth away from $\{y_n = 0\}$. In the context of the paper, these assumptions are clearly satisfied.  This simplification can be avoided through the use of cutoff functions (e.g. in the proof of Theorem 6.2 in \cite{adn1}). 

Here we will follow closely the work of Agmon, Douglis and Nirenberg (\cite{adn1}, \cite{adn2}). Our proof has three steps; first, we present a representation formula for solutions to constant coefficient systems and show how this formula implies the desired result in that circumstance. Second, we analyze the variable coefficient case. Finally, we will justify the representation formula introduced in the first step.

\subsection{The constant coefficient case} We present a formula for solutions to constant-coefficient systems of the form \eqref{weakformulation} with boundary conditions \eqref{weakformulationbdry}. 

If every function involved is $C^\infty$ with compact support, then integration by parts and \cite{adn2} Theorem 6.1 tell us  \begin{equation}\label{kernelrep}D_i u^k(y', y_n) + C_i^k =  D_i v^k(y',y_n) + 
D_i\int_{\R^{n-1}} \sum_{r=1}^2K_{kr}(y'-x', y_n)(\tilde{g}^r(x')-\phi^r(x'))dx'\end{equation} 

for any $i = 1,...,n-1$ (this is essentially equation 6.7 in \cite{adn2} with the addition of a constant to compensate for $h = h_0 -1$).  We need to define some of the above terms:

\begin{itemize}
\item The $C_i^k$s are constants.
\item Let $\Gamma$ be the fundamental solution to the linear operator $(-1)^\chi a^k_{\chi\gamma} D^{\gamma+\chi}$.  We define $$v^k(Y) = \int_{\R^n}\sum_{|\chi| \leq m_k}(-1)^\chi \Gamma_k(Y-X)D_X^\chi \tilde{f}_\chi^k(X)dX.$$ Here $\tilde{f}^k_\chi $ is a smooth, compactly supported extension of $f_\chi^k$ to all of $\R^n$. How the extension is created is not particularly important.
\item Similarly $\tilde{g}^r(x)$ is a smooth, compactly supported extension of $g^r$ to all of $\R^{n-1}$. We will abuse notation and refer to $\tilde{g}$ as $g$ (similarly with $\tilde{f}$). 
\item $\phi^r(x') \colonequals \sum_{k=1}^2 B^r_{k}(D_{x'}, D_{x_n})v^k(x',0)$. 
\item $K_{kr}$ are kernels so that if the $\psi^r$s have sufficient smoothness/growth properties and $$U^k(y',y_n) \colonequals \int_{\R^{n-1}}\sum_r K_{kr}(y'-x', y_n)\psi^r(x')dx'$$ then $(-1)^\chi a^k_{\chi\gamma} D^{\gamma+\chi}U^k = 0$ and $$\sum_{k=1}^2 B^r_{k}(D_{y'}, D_{y_n})U^k(y', 0) = \psi^r(y').$$
\end{itemize}

Classical results imply that $\Gamma(Z), D\Gamma(Z)$ are integrable (at zero) and that $D^\chi \Gamma$ (for any $|\chi| = 2$) is a Calderon-Zygmund kernel which integrates to zero on $\R^n$. For the Poisson kernels, $K$, we turn to \cite{adn1}, Sections 2 and 3. When $s = \mathrm{ord}\;B^r_k = t_k - h_r - p_r = 2-h_r$, we can deduce that $D^sK_{kr}$ is homogenous of degree $-(n-1)$ (see \cite{adn1}, equation (2.13)'). In this case, we can write $$D^sK_{kr}(y', y_n) = \frac{\Omega(\frac{y'}{|Y|}, \frac{y_n}{|Y|})}{|Y|^{-n+1}},\; Y = (y', y_n).$$ As $D^sK_{kr}$ satisfies the same differential equation as $u^k$ we conclude that $$\int_{|y'| = 1} \Omega(y', 0) d\sigma(y') = 0$$ (see the corollary on pg 645 of \cite{adn1}). Furthermore, $D^sK$ has bounded first derivatives away from zero, so $\Omega$ is smooth. In particular, $D^sK_{kr}(y', 0)$ is a Calderon-Zygmund kernel.

As the $u$'s, $f$'s and $g$'s are assumed to be $C_c^\infty$, we can differentiate under the integral sign and rewrite \eqref{kernelrep} as \begin{equation}\label{kernelrep2}\begin{aligned}D_i u^k(y', y_n) + C_k = \int_{\R^n}\sum_{|\chi| \leq m_k}\tilde{\Gamma}_{k\chi}(Y-X)f_\chi^k(X)dX +& \\
\sum_{r=1}^2\int_{\R^{n-1}} D_Y^{2-h_r}K_{kr}(y'-x', y_n)D_{x'}^{h_r -1} (g^r(x')-\phi^r(x'))dx'.&\end{aligned}\end{equation} Where we define $$\tilde{\Gamma}_{k\chi}(Y-X) \colonequals  D_Y^{e_i + \chi}\Gamma(Y-X)$$ (depending on the parity of $t_k-h_r-p_r$ the above equation may be missing some minus signs, these omissions are irrelevant to future analysis). It should also be noted all the kernels above are either integrable or  Calderon-Zygmund kernels.  We now make a crucial claim:

\medskip

\noindent {\bf Claim:} The above \eqref{kernelrep2} holds for weak solutions of the constant-coefficient system \eqref{weakformulation} and \eqref{weakformulationbdry} under the regularity assumptions $f^{\alpha}_j \in C^{\rho, \mu}(U)$ where $\rho = \max\{0, |\alpha|-1\}$, $g_{r\gamma}\in C^{\tau, \mu}(U)$ with $\tau = \max\{0, h_r-1\}$, and $u^k \in C^{1, \mu}(U)$. 

\medskip

From this one can conclude:

\begin{lem}\label{constantcoefficientestimate}
Let $u^k, k =1,2$ satisfy a system of constant-coefficient coercive and elliptic equations with proper weights. Additionally, assume that for some for $0 < \mu < 1$: $f^{\chi}_j \in C^{\rho, \mu}(U)$ where $\rho = \max\{0, |\chi|-1\}$, $g_{r}\in C^{\tau, \mu}(U)$ with $\tau = \max\{0, h_r-1\}$ and $u^k \in C^{1, \mu}(U)$. Then \begin{equation}\label{weakholderestimate2}\sum_k \|u^k\|_{C^{1, \mu}(U)} \leq C_1\left(\sum_{j,\chi} \|f_j^\chi\|_{C^{\rho,\mu}(U)} + \sum_{r} \|g_{r}\|_{C^{\tau,\mu}(U)} + \sum_{k}\|u^k\|_{C^0(U)}\right).\end{equation} Here, $C$ is independent of the $u^k$'s, the $f$'s and the $g$'s.
\end{lem}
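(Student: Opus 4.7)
The plan is to derive \eqref{weakholderestimate2} directly from the representation formula \eqref{kernelrep2}, which by the claim preceding the statement holds for weak solutions under our regularity hypotheses. Each of the two groups of terms on the right-hand side of \eqref{kernelrep2} is a convolution against a classical kernel whose H\"older mapping properties are well understood, so the bulk of the work is matching the kernel type to the H\"older regularity of the data. The additive constants $C_k$ are absorbed at the end by evaluating \eqref{kernelrep2} at any fixed interior point $Y_0\in U$: this gives $|C_k|\leq |D_iu^k(Y_0)|+(\text{RHS of \eqref{kernelrep2} at }Y_0)$, and a standard interpolation between $\|u^k\|_{C^0(U)}$ and the $C^{0,\mu}$-seminorm of $D_iu^k$ (itself controlled by the other terms) completes the bound.

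For the interior integrals $\int\tilde\Gamma_{k\chi}(Y-X)f^k_\chi(X)\,dX$ I split according to $|\chi|\leq m_k=1$. When $|\chi|=0$, the kernel $\tilde\Gamma_{k0}=D_{y_i}\Gamma$ is weakly singular of order $-(n-1)$, and convolution with a compactly supported $f^k_0\in C^{0,\mu}\cap L^\infty$ yields a $C^{0,\mu}$ function bounded by $\|f^k_0\|_{C^{0,\mu}}$. When $|\chi|=1$, $\tilde\Gamma_{k\chi}$ is a Calder\'on--Zygmund kernel of vanishing spherical mean (as noted in the excerpt), so the classical H\"older estimate for CZ convolutions gives the $C^{0,\mu}$-bound by $\|f^k_\chi\|_{C^{0,\mu}}$. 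Entirely analogous arguments bound $\phi^r=\sum_kB^r_k(D_{x'},D_{x_n})v^k(x',0)$, which is the trace of up to $(2-h_r)$-th derivatives of the Newtonian potential $v^k$, yielding $\|\phi^r\|_{C^{h_r-1,\mu}}\leq C\sum_{j,\alpha}\|f^j_\alpha\|_{C^{\rho,\mu}}$.

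For the boundary term $\sum_r\int D^{2-h_r}_Y K_{kr}(y'-x',y_n)\,D^{h_r-1}_{x'}(g^r-\phi^r)(x')\,dx'$, the excerpt records that $D^{2-h_r}_Y K_{kr}$ is homogeneous of degree $-(n-1)$ and that its trace on $\{y_n=0\}$ is a CZ kernel on $\R^{n-1}$ of vanishing spherical mean. Thus tangentially the convolution with $D^{h_r-1}_{x'}(g^r-\phi^r)\in C^{0,\mu}(\R^{n-1})$ is $C^{0,\mu}$. To upgrade to a uniform estimate on $\overline U$ one exploits the Poisson-kernel nature of $K_{kr}$: the pointwise bound
\[
\bigl|D^{2-h_r}_Y K_{kr}(y'-x',y_n)-D^{2-h_r}_Y K_{kr}(y'-x',0)\bigr|\lesssim\frac{y_n}{|(y'-x',y_n)|^n}
\]
(from the same homogeneity applied to $\partial_{y_n}K_{kr}$) controls oscillations in the normal direction by the tangential estimate, yielding the $C^{0,\mu}$ bound up to the boundary. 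Combined with the remark on $C_k$ this gives \eqref{weakholderestimate2}. The hardest step will be this last up-to-the-boundary promotion, which relies on the specific Poisson structure of $K_{kr}$ as the kernel generating solutions of the homogeneous elliptic problem with prescribed boundary data, rather than on pure singular integral theory; the vanishing spherical mean of $D^{2-h_r}_Y K_{kr}|_{y_n=0}$, inherited from $K_{kr}$ solving the homogeneous system, is what makes this step succeed.
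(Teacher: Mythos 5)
Your proposal takes a genuinely different route from the paper. The paper first reduces the estimate to the boundary trace: since each $u^k$ separately solves a constant-coefficient elliptic equation in $U$, weighted Schauder estimates (e.g. \cite{gandh}, Theorem 5.1) control $\|u^k\|_{C^{1,\mu}(\overline{U})}$ by $\|u^k|_{\{y_n=0\}}\|_{C^{1,\mu}(\R^{n-1})}$, the data, and $\|u^k\|_{C^0}$. Only then does the paper set $y_n=0$ in \eqref{kernelrep2} and invoke singular-integral theory purely on $\R^{n-1}$. This completely sidesteps the ``up-to-the-boundary promotion'' of the Poisson-kernel convolution that you correctly identify as your hardest step: on the boundary slice the Poisson-type potential collapses to a tangential Calder\'on--Zygmund operator, and no normal-direction oscillation estimate is needed.

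This difference points to a real gap in your version. The representation \eqref{kernelrep2} holds only for the tangential derivatives $D_i u^k$, $i=1,\dots,n-1$, so even after your boundary promotion you have only bounded $[D_iu^k]_\mu$ for $i<n$ on $\overline U$. The full $C^{1,\mu}$ norm in \eqref{weakholderestimate2} also requires control of $D_n u^k$, and your argument never produces it. One can try to recover $D_nu^k$ from the equation, from a differentiated representation (which raises the kernel order and breaks integrability), or from the boundary conditions plus elliptic theory --- but that last option is exactly the Schauder reduction the paper performs, so you cannot avoid it. With that reduction inserted, your tangential estimate on the boundary slice is equivalent to the paper's, at which point doing the Poisson-kernel analysis on all of $\overline U$ is unnecessary extra work. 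The correct completion of your proof is therefore to restrict \eqref{kernelrep2} to $\{y_n=0\}$ first, apply the CZ estimate there, and close with the Schauder step; as written, the proof does not establish the normal-derivative part of the $C^{1,\mu}$ bound.
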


\begin{proof}[Proof assuming the Claim]
It suffices to estimate the $C^{1,\mu}$ norm of  $u^k|_{\{y_n = 0\}}$ (as each $u^k$ satisfies an elliptic equation in $U$, the full estimate can be obtained using weighted Schauder estimates. See, e.g., \cite{gandh} Theorem 5.1 or \cite{adn1} Theorem 9.1). 

 We use the classical fact that \begin{equation}\label{holderinterpolation}\|f\|_{C^1} \leq \varepsilon [Df]_{\alpha} + C_{\varepsilon,\alpha}\sup |f|\end{equation} where $f\in C^{1,\alpha}(\R^{n-1})$ and $[f]_\alpha = \sup_{x\neq y} \frac{|f(x) - f(y)|}{|x-y|^\alpha}$ (see equations 7.4, 7.5 in \cite{adn1}). From here it follows that we need only estimate $[D_iu^k|_{\{y_n = 0\}}]_{\mu}, i = 1,...,n-1$ in terms of the norms on the right hand side. That such an estimate exists, follows immediately from the theory of singular integrals and the fact that the kernels in \eqref{kernelrep2} are either Calderon-Zygmund kernels or integrable at 0.  
 \end{proof}

\subsection{The variable coefficient case} Given Lemma \ref{constantcoefficientestimate}, the standard way to handle variable coefficients is to ``freeze" the coefficients at a point. For any $y_0 = (y'_0, 0) \in \overline{U}$, we write:

\begin{equation}\label{weakformulationapp}
\begin{aligned}
\int_{U} \sum_{\stackrel{|\chi| \leq m_1}{|\gamma| \leq 2-m_1}} a^1_{\chi\gamma}(y_0) D^\gamma u^1 D^\chi \zeta  dx &= \int_U \sum (f_\chi^1+ [a^1_{\chi\gamma}(y_0)-a^1_{\chi\gamma}(x)]D^\gamma u^1) D^\chi\zeta dx\\
\int_{U} \sum_{\stackrel{|\chi| \leq m_2}{|\gamma| \leq 2-m_2}} a^2_{\chi\gamma}(y_0) D^\gamma u^2 D^\chi \zeta  dx &= \int_U \sum (f_\chi^2+ [a^2_{\chi\gamma}(y_0)-a^2_{\chi\gamma}(x)]D^\gamma u^2) D^\chi\zeta dx
\end{aligned}
\end{equation}
for all $\zeta \in C_0^\infty(U)$. On the boundary 
\begin{equation}\label{weakformulationbdryapp}
\begin{aligned}
\int_{ \{y_n = 0\}}\left(\sum_{k=1}^2 B^1_{k}(D_{x'}, D_{x_n}, y'_0) u^k\right) \xi dx' &= \int_{\{y_n = 0\}} (g^1+G^1) \xi dx'\\
\int_{ \{y_n = 0\}} \left(\sum_{k=1}^2 B^2_{k}(D_{x'}, D_{x_n}, y'_0) u^k\right) \xi dx' &= \int_{ \{y_n = 0\}}(g^2 + G^2)  \xi dx'
\end{aligned}
\end{equation}
for all $\xi \in C_0^\infty(\partial U\cap \{y_n = 0\})$. Here $G^r \colonequals \sum_{k=1}^2 (B^r_{k}(D_{x'}, D_{x_n}, y'_0)-B^r_{k}(D_{x'}, D_{x_n}, x')) u^k$. 

\medskip

However, na\"ive application of Lemma \ref{constantcoefficientestimate} will not work as the semi-norms $[Du^k]_{\mu}$ may appear with large coefficients on the wrong side of the inequality.

\eqref{holderinterpolation} allows us to argue $$[Du^k]_{\mu} \leq \frac{1}{2} \|u^k\|_{C^{1,\mu}}\Rightarrow \|u^k\|_{C^{1,\mu}} \leq C\|u^k\|_{C^0},$$ for $k = 1,2$ (which renders our desired estimate trivially true).  So, without loss of generality, it suffices to consider the case \begin{equation}\label{bigcalpha}  \exists k = 1,2\; \mathrm{ s.t.}\; \exists P,Q \in U\; \mathrm{with} \; \frac{|Du^k(P) - Du^k(Q)|}{|P-Q|^\mu} > \frac{1}{2}\|u^k\|_{C^{1,\mu}}.\end{equation}

Let $\lambda > 0$ be determined later and assume, without loss of generality, $P = (0, t), k = 1$. We have three cases:

\medskip

\noindent {\bf Case 1:} $|P-Q| \geq \lambda$. This easily implies $2\sup |Du^1| \geq \lambda^\mu \frac{1}{2}\|u^1\|_{C^{1,\mu}}$. From here, if $\lambda$ is sufficiently small, use \eqref{holderinterpolation} to get $$C_\lambda \|u^1\|_{C^0} \geq \|u^1\|_{C^{1,\mu}}$$ which, as stated above, yields the desired estimate. 
\smallskip

\noindent {\bf Case 2:} $|P-Q| < \lambda$ but $t \geq 2\lambda$. In this case $u^k, k = 1,2$ are solutions to an elliptic system of equations in $B_{3\lambda/2}(P) \subset U$.  Interior Schauder estimates for weak solutions (see e.g. \cite{morrey}, Theorem 6.4.3 or \cite{gandt} Chapter 8) give $$\sum_k \|u^k\|_{C^{1,\mu}(B_{5\lambda/4}(P))} \leq C_\lambda \left(\sum_{j,\alpha} \|f_j^\alpha\|_{C^{0,\mu}(B_{3\lambda/2}(P))}  + \sum_{k}\|u^k\|_{C^0(B_{3\lambda/2}(P))}\right).$$ By assumption, $$\frac{1}{2}\|u^1\|_{C^{1,\mu}(U)} < \frac{|Du^1(P) - Du^1(Q)|}{|P-Q|^\mu} \leq \|u^1\|_{C^{1,\mu}(B_{5\lambda/4}(P))}$$ and so, once we have fixed $\lambda$, we have the desired result. 
\smallskip

\noindent {\bf Case 3:} $|P-Q| < \lambda$ and $t < 2\lambda$. Consider a smooth cutoff function, $\eta \in C^\infty(\R^n)$, such that $\eta(Y) \equiv 1$ when $|Y| \leq 3\lambda$ and $\eta(Y) \equiv 0$ when $|Y| \geq 5 \lambda$. Additionally, $\eta$ can be chosen such that $|D^\ell\eta| \leq C\lambda^{-\ell}$. Now consider $V^k \colonequals \eta u^k$. $V^k$ satisfies equations similar to \eqref{weakformulationapp} and \eqref{weakformulationbdryapp} but with different right hand sides. 

We can use the representation \eqref{kernelrep2} and thus Lemma \ref{constantcoefficientestimate} on the $V^k$s. We need to estimate each term on the right. The term that comes from the interior equations is dominated by $$\|\sum_{\chi, \gamma} \eta (f_\chi^k+ [a^k_{\chi\gamma}(y_0)-a^k_{\chi\gamma}(x)]D^\gamma u^1)\|_{C^{0,\mu}} + \|\sum_{|\gamma| = 1}\sum_{\chi}[a^k_{\chi\gamma}(y_0)-a^k_{\chi\gamma}(x)]u^kD^\gamma \eta\|_{C^{0,\mu}}.$$ Note that $\eta$ is supported on $B_{5\lambda}$ so $\sup |a^k_{\chi\gamma}(y_0)-a^k_{\chi\gamma}(x)| < C\lambda^\mu$.  Also recall that the $h-\mu$-conditions imply the $a^k_{\chi\gamma}$ are H\"older continuous. Thus, the first term in the offset equation above can be dominated by $\sum_{\chi, k} \|f^k_\chi\|_{C^{0,\mu}} + C\lambda^\mu[Du^k]_\mu + C \sup |Du^k|$, where the constants above are independent of $\lambda$. Similarly, the second term can be bounded by $\sum_{\chi, k} C\lambda^{\mu-1}[u]_\mu + C\sup |u^k|\lambda^{-2} + C\lambda^{-1}\sup |u^k|$.

From the boundary terms we get $$\sum_r\left( \|\sum_{k=1}^2 (B^r_{k}(D_{x'}, D_{x_n}, y'_0)-B^r_{k}(D_{x'}, D_{x_n}, x')) \eta u^k\|_{C^{h_r -1, \mu}} + \|\eta g^r\|_{C^{h_r-1, \mu}}\right).$$ As we have seen above, we need not worry when the derivatives in the boundary operators land on $\eta$ (as these terms will all be bounded by the $C^{0,\mu}$ norms of the $f$s, $g$s and $u$s and the $C^1$ norm of the $u$s). When the derivatives all land on the $u^k$ term, we argue just as above (recalling that that $h-\mu$ conditions imply that the $B$s are H\"older continuous in position) and conclude that the coefficient of $[Du^k]_\mu$ contains a positive power of $\lambda$. 

We can then pick $\lambda$ small enough so that the coefficient of $[Du^k]_\mu$ on the right hand side is less than $1/4$. This yields the estimate $$\sum_k \|V^k\|_{C^{1, \mu}(U)} \leq \frac{1}{4} \sum_k [Du^k]_\mu +C\left(\sum_{j,\chi} \|f_j^\chi\|_{C^{\rho,\mu}(U)} + \sum_{r} \|g_{r}\|_{C^{\tau,\mu}(U)} + \sum_{k}\|u^k\|_{C^0(U)}\right).$$

But $V^k = u^k$ on $P,Q$ so we have that $$\frac{1}{2}\|u^1\|_{C^{1,\mu}(U)} < \frac{|Du^1(P) - Du^1(Q)|}{|P-Q|^\mu} \leq \|V^1\|_{C^{1,\mu}(U)}$$$$\Rightarrow \frac{1}{2}\|u^1\|_{C^{1,\mu}(U)} \leq \frac{1}{4} \sum_k [Du^k]_\mu +C\left(\sum_{j,\chi} \|f_j^\chi\|_{C^{\rho,\mu}(U)} + \sum_{r} \|g_{r}\|_{C^{\tau,\mu}(U)} + \sum_{k}\|u^k\|_{C^0(U)}\right).$$ From here the desired estimate follows immediately. As such, we are done modulo the proof that \eqref{kernelrep2} holds for non-$C^\infty$ functions.

\subsection{Justifying \eqref{kernelrep2}} It remains to prove our claim above: namely, that the representation in \eqref{kernelrep2} is valid without the {\it a priori} assumption of $C^\infty$ regularity. Here we follow closely the discussion on pages 673-674 of \cite{adn1}.  It should first be noted that the integrals on the right hand side of \eqref{kernelrep2} converge if $f^{\alpha}_j \in C^{\rho, \mu}(U)$ and $g_{r\gamma}\in C^{\tau, \mu}(U)$. 

Let $j(r)$ be an approximation to the identity and then define $$J_\varepsilon u(y', y_n) \colonequals \varepsilon^{-n+1} \int \prod_{i=1}^{n-1} j\left(\frac{y_i-x_i}{\varepsilon}\right)u(x_1,..., x_{n-1}, x_n)dx'.$$ Similarly, we can define $$J_{\varepsilon, \tilde{\varepsilon}}u(y', y_n) \colonequals \frac{1}{\tilde{\varepsilon}}\int_0^\infty j\left(\frac{y_n+\tilde{\varepsilon}-s}{\tilde{\varepsilon}}\right)J_\varepsilon u(y', s)ds.$$ For any $u$ it is clear that $J_{\varepsilon, \tilde{\varepsilon}}u$ is a $C^\infty$ function in the closed upper half plane. 

Now assume the $u^k$'s satisfy a coercive and elliptic system with constant coefficients and let the $f$'s and $g$'s be as in Definition \ref{weakellipticsystem}. Then (as the system has constant coefficients) it is true that $J_{\varepsilon, \tilde{\varepsilon}}u^k$ satisfies \eqref{weakformulation} with $J_{\varepsilon, \tilde{\varepsilon}}f^k_\chi$ on the right hand side. So, with $v^k$ defined as above, \eqref{kernelrep2} becomes  \begin{equation}\label{peturbedkernelrep1}\begin{aligned}J_{\varepsilon, \tilde{\varepsilon}} D_iu^k(y', y_n) + C^k_i(\varepsilon, \tilde{\varepsilon}) = \int_{\R^n}\sum_{|\chi| \leq m_k}\tilde{\Gamma}_{k\chi}(Y-X)&J_{\varepsilon, \tilde{\varepsilon}}f_\chi^k(X)dX + \\
\sum_{r=1}^2\int_{\R^{n-1}} D_Y^{2-h_r}K_{kr}(y'-x', y_n)G_{\varepsilon, \tilde{\varepsilon}}(x', 0)dx'&\end{aligned}\end{equation}  where $$G_{\varepsilon, \tilde{\varepsilon}}(x',0) \colonequals (J_{\varepsilon, \tilde{\varepsilon}}D_{x'}^{h_r -1} \sum_{k}B^r_k(D_{x'}, D_{x_n})(u^k(x', x_n)-v^k(x',x_n)))_{x_n = 0}.$$

Note that, for $H \in C^{0,\mu}, J_{\varepsilon, \tilde{\varepsilon}}H \stackrel{\tilde{\varepsilon}\downarrow 0}{\rightarrow} J_\varepsilon H$ uniformly (by Arzel\`a-Ascoli). By assumption $f^k_\chi$ is H\"older continuous. To analyze the boundary terms, note first that $D_{x'}^{h_r-1}B_k^r$ is an operator of order 1 and, as such, $D_{x'}^{h_r -1} \sum_{k}B^r_k(D_{x'}, D_{x_n})(u^k(x', x_n)-v^k(x',x_n))$ is at least as regular as $C^{0,\mu}$. So $J_{\varepsilon, \tilde{\varepsilon}}D_{x'}^{h_r -1} \sum_{k}B^r_k(D_{x'}, D_{x_n})(u^k(x', x_n)-v^k(x',x_n))$ (and thus its restriction to $\{x_n = 0\}$) converges in the uniform topology.

Let $\tilde{\varepsilon} \downarrow 0$ to obtain \begin{equation}\label{peturbedkernelrep1}\begin{aligned}J_{\varepsilon} D_iu^k(y', y_n) + C_k(\varepsilon) = \int_{\R^n}\sum_{|\chi| \leq m_k}\tilde{\Gamma}_{k\chi}(Y-X)J_{\varepsilon}&f_\chi^k(X)dX + \\
\sum_{r=1}^2\int_{\R^{n-1}} D_Y^{2-h_r}K_{kr}(y'-x', y_n)G_{\varepsilon}(x', 0)dx'&\end{aligned}\end{equation}  where $$G_{\varepsilon}(x',0) \colonequals (J_{\varepsilon}D_{x'}^{h_r -1} \sum_{k}B^r_k(D_{x'}, D_{x_n})(u^k(x', x_n)-v^k(x',x_n)))_{x_n = 0}.$$ Since $J_\varepsilon$ is a convolution in only the $\R^{n-1}$ directions, we can set $x_n = 0$  to obtain $$G_{\varepsilon}(x',0) = J_\varepsilon D_{x'}^{h_r-1}(g^r(x') - \phi^r(x')).$$ We note, by assumption, that $g^r$ is at least H\"older continuous. As such, we can use the same argument as above to justify taking $\varepsilon \downarrow 0$; the validity of our claim follows.


\begin{thebibliography}{widest-label}
\bibitem[ADN59]{adn1} S. Agmon, A. Douglis and L. Nirenberg, Estimates Near the Boundary for Solutions of Elliptic Partial Differential Equations Satisfying General Boundary Conditions I. {\it Comm. Pure App. Math.} {\bf 17} (1959), pp 623-727.

\bibitem[ADN64]{adn2} S. Agmon, A. Douglis and L. Nirenberg, Estimates Near the Boundary for Solutions of Elliptic Partial Differential Equations Satisfying General Boundary Conditions II. {\it Comm. Pure App. Math.} {\bf 17} (1964), pp 35-92.
 
\bibitem[A79]{almgren} F.J. Almgren Jr., Dirichlet's problem for multiple valued functions and the regularity of mass minimizing integral currents. in ``Minimal Submanifolds and Geodesics" North-Holland, Amstrerdam, 1979, pp 1-6. 

\bibitem[AC81]{altcaf} H. Alt and L. Caffarelli, Existence and Regularity for a minimum problem with free boundary. {\it J. reine angew. Math.} {\bf 325} (1981), pp 105-144.

\bibitem[ACF84]{acf} H. Alt, L. Caffarelli and A. Friedman, Variational Problems  with Two Phases and Their Free Boundaries. {\it Trans. Am. Math. Soc.} {\bf 282} (1984), pp 431-461. 

\bibitem[B11]{badgerharmonicmeasure} M. Badger, Harmonic polynomials and tangent measures of harmonic measure, {\it Rev. Mat. Iberoamericana} {\bf 27} (2011), pp 841-870.

\bibitem[B13]{badgerharmonicpolynomial} M. Badger, Flat points in zero sets of harmonic polynomials and harmonic measure from two sides. {\it J. London Math. Soc.} {\bf 87} (2013), pp 111-137. 

\bibitem[BCGJ89]{bishopcarlesongarnettjones} C. Bishop, L. Carleson, J. Garnett and P. Jones, Harmonic Measures Supported on Curves. {\it Pacific J. Math.} {\bf 138} (1989), pp 233-236. 

\bibitem[C87]{caf} L. Caffarelli, A Harnack inequality approach to the regularity of free boundaries. Part I, Lipschitz free boundaries are $C^{1,\alpha}$. {\it Revista Math. Iberoamericana} {\bf 3} (1987), pp 139-162. 

\bibitem[CS05]{cafandsalsa} L. Caffarelli and S. Salsa. ``A Geometric Approach to Free Boundary Problems." Graduate Studies in Mathematics {\bf 68}, American Mathematical Society, Rhode Island, 2005. 

\bibitem[DFS14]{silvaferrarisalsa} D. De Silva, F. Ferrari and S. Salsa, Two-Phase Problems with Distributed Source: Regularity of the Free Boundary. {\it Anal. \& PDE} {\bf 7} (2014), pp 267-310.

\bibitem[DKT01]{davidkenigtoro} G. David, C. Kenig and T. Toro, Aymptotically Optimally Doubling Measures and Reifenberg Flat Sets with Vanishing Constant. {\it CPAM} {\bf 54} (2001), pp 385-449. 

\bibitem[GM05]{garnettandmarshall} J.B. Garnett and D.E. Marshall. ``Harmonic Measure." Cambridge University Press, New York, 2005. 

\bibitem[GP09]{gandp} N. Garofalo and A. Petrosyan, Some new monotonicity formulas and the singular set in the lower dimensional obstacle problem. {\it Invent. Math.} {\bf 177} (2009), pp 415-461. 


\bibitem[GT98]{gandt} D. Gilbarg and N.S. Trudinger. ``Elliptic Partial Differential Equations of Second Order." {\bf 3rd Ed.} Springer-Verlag, New York, 1998. 

\bibitem[J90]{jerison} D. Jerison, Regularity of the Poisson Kernel and Free Boundary Problems. {\it Coll. Math.} {\bf 60/61} (1990), pp 547-568. 

\bibitem[JK82]{jerisonandkenig} D. Jerison and C. Kenig, Boundary Behavior of Harmonic Functions in Non-Tangentially Accessible Domains. {\it Advances in Math.} {\bf 46} (1982), pp. 80-147. 

\bibitem[Jo90]{jones} P. Jones, Rectifiable Sets and the Travling Salesman Problem. {\it Inv. Math.} {\bf 102} (1990), pp 1-15. 

\bibitem[K29]{kellogg} O.D. Kellogg. ``Foundations of Potential Theory." {\bf 1st Ed. Reprint} Springer-Verlag, New York, 1929. 

\bibitem[KPT09]{kenigpreisstoro} C. Kenig, D. Preiss, and T. Toro, Boundary structure and size in terms of interior and exterior harmonic measures in higher dimensions. {\it J. Am. Math. Soc.} {\bf 22} (2009), pp 771-796. 

\bibitem[KT06]{kenigtorotwophase} C. Kenig and T. Toro, Free boundary regularity below the continuous threshold: 2-phase problems. {\it J. reine angew. Math.} {\bf 596} (2006), pp 1-44. 

\bibitem[KT03]{kenigtoro} C. Kenig and T. Toro, Poisson Kernel characterization of Reifenberg flat chord arc domains. {\it Ann. Sci. \'Ecole Norm. Sup.} {\bf 36} (2003), pp 323-401. 

\bibitem[KT97]{kenigtoroduke} C. Kenig and T. Toro, Harmonic Measure on Locally Flat Domains. {\it Duke Math. J.} {\bf 87} (1997), pp 509-550.

\bibitem[KNS78]{knstwosided} D. Kinderlehrer, L. Nirenberg and J. Spruck, Regularity in Elliptic Free Boundary Problems I. {\it J. D' Analyse Math.} {\bf 34} (1978), pp 86-119.

\bibitem[KN77]{kinderlehrernirenberg} D. Kinderlehrer and L. Nirenberg, Regularity in free boundary problems. {\it Ann. Scuola Norm. Sup. Pisa} {\bf 4} (1977), pp 373-391. 

\bibitem[KS80]{kinderlehrerstampacchia} D. Kinderlehrer and G. Stampacchia. ``An Introduction To Variational Inequalities and Their Applications." Academic Press, New York, 1980.

\bibitem[L77]{lewy} H. Lewy, On the minimum number of domains in which the nodal lines of spherical harmonics divide the sphere. {\it Comm. Part. Difff. Equ.} {\bf 2} (1977), pp 1233-1244. 


\bibitem[M09]{almgrennotes} D. Maldonado, Notes on F. J. Almgren's Frequency Formula. {\it Online Course Notes} (2009). Accessed April 2013 at \url{http://www.math.ksu.edu/~dmaldona/research/slides/NotesFreqFormula.pdf}.

\bibitem[Mon09]{monneau} R. Monneau, Pointwise estimates for Laplace equation. Applications to the free boundary of the obstacle problem with Dini coefficients. {\it Journal of Fourier Analysis and Applications} {\bf 15} (2009), pp 279-335. 

\bibitem[Mor66]{morrey}  C. B. Morrey. {\it Multiple Integrals in the Calculus of Variations.} Die Grundlehren der math. Wissenschaften in Einzeldarstellungen vol {\bf 130}, Springer-Verlag, 1966.

\bibitem[P92]{pommeranke} C. Pommerenke. ``Boundary Behaviour of Conformal Maps." Springer Verlag, New York, 1992. 

\bibitem[S70]{steinsingularintegrals} E. Stein. ``Singular Integrals and Differentiability Properties of Functions." Princeton University Press, Princeton, NJ, 1970. 
\end{thebibliography}
\end{document}